\documentclass[times]{article}

\usepackage{moreverb}
\usepackage{cite}
\usepackage{longtable}
\usepackage{graphicx}
\usepackage{amsmath}
\usepackage{amscd}
\usepackage{amssymb}
\usepackage{wrapfig}
\usepackage{mathrsfs}
\usepackage{amsthm}
\usepackage[matrix,arrow,curve]{xy}

\newcommand\BibTeX{{\rmfamily B\kern-.05em \textsc{i\kern-.025em b}\kern-.08em
T\kern-.1667em\lower.7ex\hbox{E}\kern-.125emX}}

\newtheorem{definition}{Definition}[section]

\newtheorem{theorem}{Theorem}[section]
\newtheorem{lemma}{Lemma}[section]

\newtheorem{note}[theorem]{Remark}
\newtheorem{ex}[theorem]{Example}

\newcommand{\lmod}{\mathrm{-mod}}
\newcommand{\rmod}{\mathrm{mod-}}
\newcommand{\Rmod}{\mathrm{Mod-}}
\newcommand{\Lex}{\mathcal{L}ex}
\newcommand{\Rex}{\mathcal{R}ex}
\newcommand{\coker}{\mathrm{coker\ }}

\newcommand{\colim}{\mathrm{colim\ }}
\newcommand{\Hom}{\mathrm{Hom }}
\newcommand{\RHom}{\mathbf{R}\mathrm{Hom }}
\newcommand{\End}{\mathrm{End }}

\newcommand{\pd}{\mathbf{pd.}}

\newcommand{\id}{\mathbf{id}}
\newcommand{\Ext}[1]{\mathrm{Ext}^{#1}}

\date{}
\begin{document}
\title{On derived equivalences of exact structures on the category of representations $k[x]/(x^n)$}
\author{Nikita~Monchenko}
\maketitle
\begin{abstract}
From the perspective of homological algebra, exact categories are interesting because they possess derived categories. Typically, an additive category admits many exact structures. If the original category is sufficiently nice (for example, quasi-abelian), a classification of exact structures can be given in terms of certain Serre subcategories in the category of contravariant functors. It turns out that the derived categories of different exact structures are functorially related. This work is concerned with sufficient conditions for the derived equivalence of different exact structures on certain categories.
\end{abstract}

\section{Introduction}
Exact categories were first introduced by Quillen in 1972 for the definition of higher K-theory. The formalism of exact categories allows extending the apparatus of homological algebra from abelian categories to a wider class of additive categories. The question of classifying exact structures has been widely discussed in the literature. In \cite{Rump}, it was proven that on any additive category there exists a maximal exact structure containing all smaller ones. Later, an explicit description of the maximal exact structure was obtained for quasi-abelian categories, as well as for categories with split idempotents (see \cite{Cr} for details).

Intermediate exact structures, which are neither maximal nor minimal, are of interest. In \cite{En}, a complete classification of exact structures on a category with split idempotents was given in terms of Serre subcategories of the category of contravariant functors $\mathcal{F}unc(\mathcal{E}^{op}, \mathcal{A}b)$ (Theorem \ref{Classif}). However, from a practical point of view, this theorem turns out to be difficult to apply, since the problem of finding all Serre subcategories is extremely complex. Nevertheless, in some cases, for example, for Krull-Schmidt categories of finite type, it is possible to obtain an explicit description of all exact structures on the category in its internal terms, using Auslander theory and almost split sequences. Furthermore, the problem of classifying exact structures on a category is closely related to the study of properties of the right/left abelian envelopes of the category, introduced by A.I. Bondal and A. Bodzenta in \cite{BB}. It turns out that if an exact structure $\epsilon$ is contained in an exact structure $\epsilon '$, then the abelian envelope of the category with the exact structure $\epsilon '$ is a quotient category of the abelian envelope of this category with the exact structure $\epsilon$.

The main goal of the present work is to study the question of when two exact structures on a given additive category are derived equivalent. This question has hardly been discussed in the literature so far. Unfortunately, a criterion for the derived equivalence of exact structures has not yet been obtained. Nevertheless, the case of the category of representations $\rmod k[x]/(x^n)$ was studied, and a necessary and sufficient condition for derived equivalence was formulated for it (Theorem \ref{main}). These questions turned out to be closely related to the existence of spherical objects in the abelian envelope of the category with the minimal exact structure.

The study of abelian envelopes of exact categories is also interesting from the point of view that the abelian envelope of the maximal exact structure always has a global dimension not exceeding 2 (Theorem \ref{dim}), i.e., it is smooth. This allows considering it as a noncommutative resolution of the original category, possibly non-smooth (i.e., of infinite global dimension). Sometimes a smooth resolution can be given by the abelian envelope of a smaller exact structure.

\section{Exact categories}
Let $\mathcal{E}$ be an additive category (i.e., a category with a zero object and all finite biproducts. This is sufficient to define an abelian group structure on morphisms). We will call a \textit{conflation} or a kernel-cokernel pair a triple
$$X\xrightarrow{i} Y \xrightarrow{d} Z,$$
where $i = \ker d$, $d = \coker i$. The morphism $d$ will be called a deflation (or admissible epimorphism), and $i$ an inflation (or admissible monomorphism).
\begin{definition}
    An exact structure on $\mathcal{E}$ is a set $S$ of conflations satisfying three dual pairs of axioms:
    \begin{enumerate}
        \item For any $A\in \mathcal{E}$, $\id_A$ is an inflation, i.e., $S$ contains a conflation of the form 
        $$A\xrightarrow{\id} A \xrightarrow{} 0.$$
        \item For any $A\in \mathcal{E}$, $\id_A$ is a deflation, i.e., $S$ contains a conflation of the form 
        $$0\xrightarrow{} A \xrightarrow{\id} A.$$
        \item The composition of inflations is an inflation.
        \item The composition of deflations is a deflation.
        \item Let $i: \ X\rightarrow Y$ be an inflation, $f: \ X\rightarrow Z$ an arbitrary morphism. Then there exists a Cartesian square:
        $$
        \xymatrix{
        X \ar[r]^{i} \ar[d]_f& Y \ar[d]\\
        Z\ar[r]^{i'} & Y \coprod_X Z \\
        }
        $$
        and $i'$ is an inflation. 
        \item Let $d: \ X\rightarrow Y$ be a deflation, $f: \ Z\rightarrow Y$ an arbitrary morphism. Then there exists a co-Cartesian square:
        $$
        \xymatrix{
        Z\prod_Y X \ar[r]^{d'} \ar[d]_f& Z \ar[d]_{f}\\
        X\ar[r]^{d} & Y \\
        }
        $$
        and $d'$ is an inflation. 
    \end{enumerate}
\end{definition}

\begin{definition}
    A pair $(\mathcal{E}, S)$ of an additive category and an exact structure on it is called an \textit{exact category}.
\end{definition}
Henceforth, wherever this does not lead to ambiguity and the exact structure is clear from the context, we will call the category $\mathcal{E}$ exact.
\begin{ex}
    On any additive category, there exists a minimal exact structure, consisting of split conflations of the form
    $$X\rightarrow X\oplus Z \rightarrow Z,$$
    where the morphisms are the standard inclusion and projection into the direct sum. Indeed, for any two objects, we can write a lift along the zero morphism:
    $$\xymatrix{
    X \ar[r]\ar[d] & X\oplus Z \ar[d] \ar[r] & Z\ar[d]\\
    X \ar[r]^{\id} & X \ar[r] & 0\\
    }$$
\end{ex}

\begin{theorem}[\cite{Rump}]
    On any additive category, there exists a maximal exact structure by inclusion, containing all smaller ones.
\end{theorem}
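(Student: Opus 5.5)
Since every conflation is determined by its deflation (equivalently, by its inflation), I will take as candidate maximal structure the class $S_{\max}$ of all kernel-cokernel pairs $X\xrightarrow{i}Y\xrightarrow{d}Z$ in which $d$ is \emph{stable}, i.e.\ its pullback along every morphism $Z'\to Z$ exists and is again a cokernel, and $i$ is \emph{costable}, i.e.\ its pushout along every morphism $X\to X'$ exists and is again a kernel. This is Rump's description. The proof then has two steps. First, show that $S_{\max}$ contains every exact structure $S$. Second, show that $S_{\max}$ is itself an exact structure.

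\textbf{Step 1: containment.} Let $S$ be any exact structure. By the pushout/pullback axioms (5),(6), every inflation of $S$ admits pushouts along arbitrary morphisms, and these pushouts are inflations of $S$, hence kernels. Dually, every deflation of $S$ is stable. So every conflation of $S$ lies in $S_{\max}$.

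\textbf{Step 2: axioms for $S_{\max}$.} The identity axioms (1),(2) are immediate: pullbacks and pushouts of identities are isomorphisms, which are kernels and cokernels. For (5),(6) I will use the pasting lemma for pullbacks. A pullback of a stable cokernel $d$ along $f$ is again a cokernel. It is again stable, because its pullbacks are pullbacks of $d$. It remains to show that the pulled-back conflation still has costable kernel. Its kernel identifies with $\ker d$, and this is where the real work lies. The hard axioms are (3),(4), closure under composition, together with the requirement that the class be self-dual. One must show that if $d_1,d_2$ are stable cokernels then $d_2d_1$ is a cokernel, is stable, and has a costable kernel.

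\textbf{Main obstacle and approach.} For composition, I first note that pullbacks of $d_2d_1$ exist, obtained by pasting two pullback squares. So stability reduces to showing that $d_2d_1$ is a cokernel, since each of its pullbacks is again a composite of stable cokernels of the same kind. To prove that $d_2d_1$ is the cokernel of its kernel, I argue directly. Take $g$ with $g\circ\ker(d_2d_1)=0$. Then $g$ kills $\ker d_1$, so $g=g'd_1$. Next, pull $d_1$ back along $\ker d_2$; this uses stability of $d_1$ and is the essential point. The pulled-back map is a cokernel, so $g'\circ\ker d_2=0$, and hence $g'=g''d_2$. Uniqueness follows since $d_2d_1$ is epi.

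The delicate part is the interplay between the two halves of the definition. One must check that the kernel of a pullback of a stable cokernel is still costable, and dually. For this I will establish Rump's lemma, which I expect to be the hardest step: if $i=\ker d$ with $d$ a stable cokernel and $i$ costable, then for any $f:X\to X'$ the pushout $i'$ fits into a conflation $X'\to Y'\to Z$ with the same $Z$. From this lemma, the pushout and pullback constructions preserve $S_{\max}$, and the argument for (3) dualizes to (4).
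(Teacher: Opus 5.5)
The paper gives no proof of this statement; it is quoted from \cite{Rump}, so your proposal has to stand on its own. Your candidate class is the right one, and several steps are sound. These are Step~1, the identity axioms, the fact that pullbacks of stable cokernels are again stable (by pasting), and your argument that $d_2d_1$ is a cokernel and hence again a stable one. However, the proposal stops exactly where the content of the theorem lies. For axiom (6) you must show the following. For a stable pair $X\xrightarrow{i}Y\xrightarrow{d}Z$ and any $h:W\to Z$, the kernel $i_W:X\to Y\times_Z W$ of the pulled-back cokernel must again be costable. That is, $(Y\times_Z W)\sqcup_X X'$ must exist for every $f:X\to X'$, and the induced map from $X'$ must be a kernel. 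You defer this to ``Rump's lemma'', for which you give no argument, and as stated the lemma does not deliver it. Read literally, the lemma is formal. Pushouts preserve cokernels, so $d':Y'\to Z$ is the cokernel of $i'$; and $i'$ is a kernel by costability, hence $i'=\ker d'$. What is actually needed is that $d'$ is again a \emph{stable} cokernel, i.e.\ that $Y'\times_Z W$ exists and is a cokernel for every $h$. Read that way, the lemma is simply a restatement of axiom (5) for $S_{\max}$. The naive attempts are circular. The obvious candidate for $(Y\times_Z W)\sqcup_X X'$ is $Y'\times_Z W$, whose existence is part of the stability of $d'$. Conversely, the obvious candidate for $Y'\times_Z W$ is $(Y\times_Z W)\sqcup_X X'$, whose existence is part of the costability of $i_W$. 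A genuinely new argument is needed to break this loop, and you do not supply one.

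The same gap reappears in axiom (4). A deflation of $S_{\max}$ must have a costable kernel. So besides showing that $d_2d_1$ is a stable cokernel, you must prove that $q=\ker(d_2d_1)$ is costable, where $q$ is the pullback of $\ker d_2$ along $d_1$. You list this requirement but never address it. It is not a formal consequence of pasting pushout and pullback squares; it needs a separate argument of the kind used to prove the ``obscure axiom'' in exact categories. Until these interactions between the kernel half and the cokernel half of stability are established, you have shown that $S_{\max}$ contains every exact structure. You have not shown that $S_{\max}$ is itself an exact structure, which is the actual content of the theorem.
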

\begin{ex}
    In an abelian category, the standard exact structure is maximal. Indeed, any monomorphism is a kernel and can be completed to a short exact sequence, just as any epimorphism is a cokernel and can also be completed to a short exact sequence. It is obvious that the axioms of an exact category are satisfied.
\end{ex}
\begin{definition}
    An additive category $\mathcal{A}$ is called \textit{quasi-abelian} if any morphism has a kernel and a cokernel, any lift of a cokernel along an arbitrary morphism exists and is a cokernel, and any base change of a kernel along an arbitrary morphism exists and is a kernel.
\end{definition}
\begin{definition}
    A morphism $f$ in a quasi-abelian category is called \textit{strict} if the natural morphism $\coker(\ker f)\rightarrow \ker(\coker f)$ is an isomorphism.
\end{definition}
\begin{ex}
    Kernel-cokernel pairs where both morphisms are strict form an exact structure on a quasi-abelian category. In \cite{Cr}, it is shown that this exact structure is maximal.
\end{ex}

Thus, on any additive category, there exist two possibly coinciding exact structures: the minimal (trivial or split) and the maximal. 
\begin{ex}
    The category of vector spaces over an arbitrary field admits only one exact structure. Indeed, it is abelian, and any short exact sequence of vector spaces is split.
\end{ex}

We will say that a morphism $f$ in an exact category is \textit{admissible} if it can be represented as a composition $f = i \circ d$, where $d$ is an admissible epimorphism and $i$ is an admissible monomorphism. 

Without delving into technical details, we will explain how the derived category of an exact category is defined, while also recalling standard definitions from homological algebra. A more detailed construction of the derived category of an exact category is described in \cite{Buh}.

For any additive category, the notion of a chain complex makes sense. A \textit{chain complex} over an additive category $\mathcal{E}$ is a collection of objects $\{K^i\}_{i =-\infty}^{\infty}$ and differentials $d^i :\  K^i\rightarrow K^{i+1}$ satisfying $d^{i+1}\circ d^i = 0$. Morphisms of complexes are defined naturally. It is obvious that if the original category $\mathcal{E}$ was exact/abelian, then the category of complexes $\mathcal{K}(\mathcal{E})$ will also be exact/abelian, respectively. Let $K^\bullet$ be a complex with admissible differentials. Then split each differential:
$$\xymatrix{
...\ar@{->>}[rd] \ar[rr]^{d^{i-2}}&&K^{i-1}\ar@{->>}[rd] \ar[rr]^{d^{i-1}}&& K^i\ar@{->>}[rd] \ar[rr]^{d^{i}}&& K^{i+1}\ar@{->>}[rd] \ar[rr]^{d^{i+1}}&& ...\\
 & Z^{i-1} \ar@{^{(}->}[ru] && Z^{i} \ar@{^{(}->}[ru] && Z^{i+1} \ar@{^{(}->}[ru] && Z^{i+2} \ar@{^{(}->}[ru]\\
}$$

A morphism of complexes $f^\bullet: \ K^\bullet \rightarrow L^\bullet$ is called \textit{null-homotopic} if there exist morphisms $h^i :\ K^i \rightarrow L^{i-1}$ such that $f^i = h^{i+1}\circ d_{K}^i + d_L^{i-1}\circ h^i$; two morphisms $f^\bullet, g^\bullet: \ K^\bullet \rightarrow L^\bullet$ are called \textit{homotopic} if $f^\bullet - g^\bullet \sim 0$. A complex is called contractible if the identity morphism on it is null-homotopic. Complexes $K^\bullet$ and $L^\bullet$ are called homotopy equivalent if there exist $f^\bullet:\ K^\bullet \rightarrow L^\bullet$ and $g^\bullet:\ L^\bullet \rightarrow K^\bullet$ such that $fg\sim \id_L$, $gf \sim \id_K$. The localization of the category of complexes $\mathcal{K}(\mathcal{E})$ is called the \textit{homotopy category} $\mathcal{H}(\mathcal{E})$.

We say that a complex $K^\bullet$ is \textit{exact} or \textit{acyclic} if each triple $Z^i\rightarrow K^i \rightarrow Z^{i+1}$ is a conflation. 

Let $f^\bullet: K^\bullet \rightarrow L^\bullet$ be a morphism of complexes. Its cone, as in standard homological algebra, is the complex $C(f)^\bullet$ with terms $C(f)^i = K^{i+1}\oplus L^{i}$ and differential
$$d_{C(f)}^i = \begin{pmatrix}
    d_K^{i+1} & 0\\
    f^i & d_L^{i}\\
\end{pmatrix}$$

Unfortunately, in an exact category, the notion of cohomology does not generally make sense. Nevertheless, the notion of a quasi-isomorphism can be defined. A morphism of complexes $f^\bullet: \ K^\bullet \rightarrow L^\bullet$ is called a \textit{quasi-isomorphism} if its cone $C(f)^\bullet$ is acyclic.

\begin{definition}
    The \textit{derived category} $\mathcal{D}(\mathcal{E})$ is the localization of the homotopy category $\mathcal{H}(\mathcal{E})$ with respect to quasi-isomorphisms. The bounded derived category $\mathcal{D}^*(\mathcal{E})$, $*\in \{+,-,\flat\}$ is the full subcategory of $\mathcal{D}(\mathcal{E})$ whose objects are quasi-isomorphic to bounded below/above/both-sided complexes, respectively.
\end{definition}

Just as in the abelian case, the categories $\mathcal{H}(\mathcal{E})$ and $\mathcal{D}(\mathcal{E})$ are triangulated. 

\begin{ex}
    Let $\mathcal{A}$ be an abelian category with enough projective objects. Consider the full subcategory $\mathcal{P}$ of projective objects of $\mathcal{A}$. It possesses a natural exact structure induced by the exact structure on $\mathcal{A}$: a triple $P_0 \rightarrow P_1 \rightarrow P_2$ is a conflation in $\mathcal{P}$ if and only if the corresponding sequence in $\mathcal{A}$ is a short exact sequence. A classical statement from homological algebra allows us to easily conclude that $\mathcal{D}^-(\mathcal{P})$ and $\mathcal{D}^-(\mathcal{A})$ are equivalent as derived categories of exact categories. Note that subcategories of projective objects of abelian categories were historically the first examples of exact categories; they were considered by Quillen when defining the Q-construction for algebraic K-theory. 
\end{ex}

To conclude our overview of the basic principles of the theory of exact categories, we give a formulation of a theorem that is an analogue of the famous Freyd-Mitchell theorem for abelian categories.

\begin{theorem}[Gabriel, Quillen]
    Let $\mathcal{E}$ be a skeletally small exact category. Then there exists an abelian category $\mathcal{A}$ and a faithful, full, and exact functor $i: \ \mathcal{E}\rightarrow \mathcal{A}$ (i.e., sending conflations to short exact sequences), reflecting exactness (i.e., the preimage of a short exact sequence is a conflation), and $i(\mathcal{E})$ is closed under extensions in $\mathcal{A}$. 
\end{theorem}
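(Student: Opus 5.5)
The embedding I plan to use is the left exact functors. Take $\mathcal{A}=\Lex(\mathcal{E}^{op},\mathcal{A}b)$, the category of additive contravariant functors $F:\mathcal{E}^{op}\to\mathcal{A}b$ that are left exact with respect to the exact structure. Left exact here means: for every conflation $X\xrightarrow{i}Y\xrightarrow{d}Z$ the sequence $0\to F(Z)\to F(Y)\to F(X)$ is exact. Let $i:\mathcal{E}\to\mathcal{A}$ be the Yoneda functor $X\mapsto \Hom(-,X)$. Skeletal smallness of $\mathcal{E}$ makes $\mathcal{A}$ a legitimate category.

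\textbf{Step 1: $\mathcal{A}$ is abelian.} The functor category $\mathcal{F}unc(\mathcal{E}^{op},\mathcal{A}b)$ is abelian. I will show that $\Lex$ is a reflective subcategory, closed under kernels, and that its reflection (sheafification) $a$ is exact. Then cokernels in $\mathcal{A}$ are $a$ applied to cokernels computed in the functor category, and $\mathcal{A}$ is abelian.

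I would build $a$ as a Grothendieck topology sheafification, with covering families the single deflations $d:Y\to Z$. The axioms make this a valid topology: identities are deflations, deflations compose, and deflations are stable under base change. Then $a=(-)^{+}\circ(-)^{+}$ by the usual plus-construction. Left exactness of $a$ follows from filtered colimits being exact in $\mathcal{A}b$.

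\textbf{Step 2: properties of $i$.} A representable $\Hom(-,X)$ is left exact because in a conflation $d=\coker i$. So $i$ lands in $\mathcal{A}$. Yoneda gives that $i$ is full and faithful.

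For exactness, take a conflation $X\to Y\to Z$. The sequence $0\to h_X\to h_Y\to h_Z$ is exact already in the functor category, since $i=\ker d$. Its cokernel $C=\coker(h_Y\to h_Z)$ has sheafification zero: every section of $h_Z$ lifts locally along the covering $d$. Hence $h_X\to h_Y\to h_Z\to 0$ is exact in $\mathcal{A}$.

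\textbf{Step 3: reflection of exactness and closure under extensions.} Let $0\to h_X\to F\to h_Z\to 0$ be exact in $\mathcal{A}$. The point $\id_Z\in h_Z(Z)$ lifts to $F$ only locally, so there is a deflation $d:Z'\to Z$ and a section $s\in F(Z')$ lifting $d$. From this data I will construct a conflation $X\to E\to Z$ with $h_E\cong F$:
\begin{itemize}
\item pull the extension back along $d$, where it splits;
\item then push out using the exact structure axioms (5),(6).
\end{itemize}
This presents $F$ as representable. The same argument, applied to an exact sequence of representables, forces the original triple to be a conflation.

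\textbf{Main obstacle.} I expect this last step to be the hardest, since it needs the full strength of the pushout and pullback axioms together with the obscure axiom. That is Quillen's lemma that if $d\circ g$ is a deflation and $g$ has a kernel, then $d$ is a deflation. I would prove that lemma first, and then run the diagram chase above. Exactness of sheafification in Step 1 is standard by comparison.
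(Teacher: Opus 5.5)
Your proposal is correct and follows the same route as the paper's sketch. $\Lex(\mathcal{E}^{op},\mathcal{A}b)$ is exactly the category of additive sheaves for the topology whose covers are deflations, and $i$ is the Yoneda embedding. Your pullback-then-pushout chase gives $F\cong\coker\bigl(h_{\ker d}\to h_X\oplus h_{Z'}\bigr)\cong h_E$, where $E$ is the pushout of the inflation $\ker d\to Z'$ along the induced map $\ker d\to X$. This supplies the extension-closure and reflection steps that the paper leaves implicit.

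One correction: the obscure axiom needs $d$, not $g$, to have a kernel. As you state it, it fails when $g$ is a section of a split epimorphism $d$ with no kernel, which happens when idempotents do not split. Your chase never actually uses it; it is only needed if you prove reflection of exactness directly, where the outer map has a kernel because the Yoneda embedding reflects kernels.
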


In other words, any exact category is a full exact subcategory of some abelian category. The idea of the proof lies in observing that the class of deflations of an exact structure defines a Grothendieck topology $\tau$ on the category $\mathcal{E}$, turning the category into a site $(\mathcal{E},\tau)$. It turns out that this topology is subcanonical, i.e., any representable functor $F(-) \cong \Hom(-,A)$ is a sheaf. Then the required abelian category is the category of sheaves $\mathcal{A} = Sh(\mathcal{E})$, and the functor $i$ is the usual Yoneda embedding: $A \mapsto \Hom(-,A)$.

\section{Abelian envelopes}
As already mentioned above, the Gabriel-Quillen theorem guarantees that any exact category $\mathcal{E}$ can be embedded into some abelian category. In some cases, the ambient abelian category can be chosen in a canonical way. Henceforth, except for some technical lemmas, the exposition follows the article \cite{BB}.

\begin{definition}
    A functor $F:\ \mathcal{E}\rightarrow\mathcal{H}$ between exact categories is called \textit{right exact} if for any conflation $X\xrightarrow{i} Y \xrightarrow{d} Z$ in $\mathcal{E}$, the morphism $F(d)$ is a deflation and has a kernel $i': X'\rightarrow F(Y)$ such that
    $$\xymatrix{
    F(X) \ar[r]^{F(i)} \ar @{->>}[d]_{d'}& F(Y)\ar@{->>} [r]^{F(d)} & F(Z)\\
    X'\ar@{^{(}->}[ru]^{i'}& & \\
    }$$
    for some deflation $d':\ F(X)\rightarrow X'$ (in particular, $F(i)$ is an admissible morphism).
\end{definition}

In particular, if the target category $\mathcal{H}$ is abelian, the definition means that for any conflation $X\xrightarrow{i} Y \xrightarrow{Z}$ in $\mathcal{E}$, the sequence $F(X)\xrightarrow{F(i)} F(Y) \xrightarrow{d} F(Z) \xrightarrow{} 0$ is exact in $\mathcal{H}$.

The category of right exact functors between two exact categories will be denoted by $\Rex(\mathcal{E},\mathcal{H})$.

Dually, left exact functors between exact categories can be defined. Their category will be denoted by $\Lex(\mathcal{E},\mathcal{H})$.

\begin{definition}
    A \textit{right abelian envelope} of an exact category $\mathcal{E}$ is a pair $(\mathcal{A}_r(\mathcal{E}),i_\mathcal{E})$, where $\mathcal{A}_r(\mathcal{E})$ is an abelian category, $i_\mathcal{E}:\ \mathcal{E}\rightarrow \mathcal{A}_r(\mathcal{E})$ is a right exact functor, such that for any abelian category $\mathcal{B}$, the functor $i_\mathcal{E}$ induces an equivalence of categories $\mathcal{R}ex(\mathcal{E},\mathcal{B})\cong \Rex(\mathcal{A}_r(\mathcal{E}),\mathcal{B})$. In other words, for any right exact functor $F:\ \mathcal{E}\rightarrow \mathcal{B}$, there exists a unique up to natural isomorphism functor $\Tilde{F}:\ \mathcal{A}_r(\mathcal{E})\rightarrow \mathcal{B}$ such that the following diagram commutes (up to a canonical natural isomorphism):
    $$\xymatrix{
    \mathcal{E} \ar[r]^{i_\mathcal{E}} \ar [d]_F& \mathcal{A}_r(\mathcal{E}) \ar[dl]^{\Tilde{F}}\\
    \mathcal{B}& \\
    }$$
\end{definition}
Dually, a left abelian envelope can be defined.

From the universal property, it is obvious that the right abelian envelope, if it exists, is uniquely defined up to canonical equivalence of categories.

Consider the following example, proposed by J.-P. Schneiders in \cite{Sch}.
\begin{ex}
    Let $\mathcal{E}$ be a quasi-abelian category with its natural exact structure. The category $\mathcal{LH(E)}$ is a localization of the subcategory $\mathcal{K(E)}$ consisting of complexes of the form
    $$0\longrightarrow E \xrightarrow{f} F \longrightarrow 0,$$
    where $F$ is in degree zero, $f$ is a monomorphism, with respect to morphisms of complexes such that the square
    $$\xymatrix{
    E_1 \ar[r] & F_1 \\
    E_0 \ar[u]^{\delta_E} \ar[r]& F_0 \ar[u]^{\delta_F}\\
    }$$
    is Cartesian and co-Cartesian. The category $\mathcal{LH(E)}$ is abelian (\cite{Sch}, Corollary 1.2.21). There exists a faithful, full, and exact functor $i_\mathcal{E}: \mathcal{E}\rightarrow \mathcal{LH(E)}$, sending an object $X\in\mathcal{E}$ to the complex $$0\longrightarrow X \longrightarrow 0,$$ where $X$ is in degree zero. Moreover, there exists a functor $\pi:\ \mathcal{LH(E)}\rightarrow \mathcal{E}$, sending a complex $(E\xrightarrow{f} F)$ to $\coker f$. It is easy to see that an object $E\xrightarrow{f} F$ lies in the image of the embedding $i_\mathcal{E}$ if and only if $f$ is a strict monomorphism. Indeed, this is equivalent to the square 
    $$\xymatrix{
    0 \ar[r]& \coker f \\
    E \ar[r]^{f} \ar[u]& F\ar[u]\\
    }$$
    being Cartesian and co-Cartesian. In particular, if $\mathcal{E}$ was abelian, then the category $\mathcal{LH(E)}$ is equivalent to it. The embedding $i_\mathcal{E}$ induces an equivalence of unbounded derived categories $\mathcal{D(E)}$ and $\mathcal{D(LH(E))}$. Moreover, the category $\mathcal{LH(E)}$ is the right abelian envelope of the category $\mathcal{E}$.
\end{ex}

If it is known that an abelian envelope exists, an explicit description can be given.

\begin{definition}
    An object $X$ in a category $\mathcal{C}$ is called compact if for any filtered colimit along a diagram $F:\ \mathcal{I}\rightarrow \mathcal{C}$, there is an isomorphism $$\colim \Hom(X, F(i))\xrightarrow{\cong}\Hom(X, \colim F(i)).$$
\end{definition}

We need the following well-known result:
\begin{lemma}
    Let $R$ be an arbitrary associative algebra with unity. Then the subcategory of compact objects in $\rmod R$ coincides with the subcategory of finitely presented modules $f.p.(R)$.
\end{lemma}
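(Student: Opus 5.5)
We must show that a right $R$-module $M$ is compact in $\rmod R$ (that is, $\Hom(M,-)$ commutes with filtered colimits) if and only if $M$ is finitely presented. Here $\rmod R$ is read as the category of all right $R$-modules, so that filtered colimits exist. The plan is to prove the two implications separately. Both rest on two facts: filtered colimits in $\rmod R$ are exact, and they are computed on underlying sets, so every element of $\colim F(i)$ comes from some $F(i)$, and two elements of $F(i)$ agree in the colimit once they agree in some later $F(j)$.

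For finitely presented $\Rightarrow$ compact, first treat $M=R^n$. Here $\Hom(R^n,N)\cong N^n$ naturally, and finite products commute with filtered colimits of sets and modules, so $R^n$ is compact. For general $M$, choose a presentation
$$R^m\to R^n\to M\to 0.$$
Applying $\Hom(-,N)$ gives a natural left exact sequence
$$0\to\Hom(M,N)\to N^n\to N^m.$$
Take the filtered colimit over $N=F(i)$. Exactness of filtered colimits identifies $\colim\Hom(M,F(i))$ with the kernel of $(\colim F(i))^n\to(\colim F(i))^m$, and that kernel is $\Hom(M,\colim F(i))$. One checks that this identification is the canonical comparison map, either by the five lemma or by direct inspection.

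For compact $\Rightarrow$ finitely presented, do it in two steps. First, write $M$ as the filtered union of its finitely generated submodules $M_\alpha$. Compactness gives $\colim\Hom(M,M_\alpha)\cong\Hom(M,M)$, so $\id_M$ factors through some $M_\alpha\subseteq M$. Hence $M=M_\alpha$, and $M$ is finitely generated. Second, choose a surjection $R^n\to M$ with kernel $K$, and write $K$ as the filtered union of its finitely generated submodules $K_\beta$. Then $M=\colim R^n/K_\beta$, a filtered colimit with surjective transition maps. By compactness, $\id_M$ factors through some $R^n/K_\beta$ via a map $s\colon M\to R^n/K_\beta$, in the sense that $\pi_\beta\circ s$ is the identity map of $M$, where $\pi_\beta\colon R^n/K_\beta\to M$ is the projection. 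Also $s\circ\pi_\beta$ and the transition map become equal after passing to some larger index $\gamma$, because the element $\id_M$ has a unique representative in the colimit up to this equivalence.

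The main obstacle is this last bookkeeping step: turning the factorization into $K=K_\gamma$. Rather than arguing with $s\circ\pi_\beta$ directly, I would argue on elements. Take $k\in K$. Its class in $R^n/K_\beta$ maps to $0$ in $M$. The map $M\to R^n/K_\gamma$ obtained by composing $s$ with the transition map is a section of the projection to $M$. Injectivity of the comparison map $\colim\Hom(R^n/K_\beta,F_\gamma)\to\Hom(R^n/K_\beta,M)$ then forces the transition map $R^n/K_\beta\to R^n/K_\gamma$ to factor through $M$ after enlarging $\gamma$. Evaluating this factorization at the class of $k$ gives that $k$ maps to $0$ in $R^n/K_\gamma$, that is, $k\in K_\gamma$. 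Thus $K=K_\gamma$ is finitely generated, and $M$ is finitely presented.
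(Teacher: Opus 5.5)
The paper gives no proof of this lemma; it is quoted as well known. Your argument is the standard one and is correct, including your reading of $\rmod R$ as the category of all right modules, which is what makes filtered colimits available. One justification in Step 2 needs correcting.

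Your first reason for $t_{\beta\gamma}\circ s\circ\pi_\beta=t_{\beta\gamma}$ does not work. You say it holds ``because $\id_M$ has a unique representative in the colimit''. But injectivity of $\colim\Hom(M,R^n/K_\gamma)\to\Hom(M,M)$ only controls maps \emph{out of} $M$, whereas $s\circ\pi_\beta$ and $t_{\beta\gamma}$ are maps out of $R^n/K_\beta$.

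The injectivity you actually invoke afterwards is that of $\colim_\gamma\Hom(R^n/K_\beta,R^n/K_\gamma)\to\Hom(R^n/K_\beta,M)$. It is true, but not because $M$ is compact. It follows from your first half applied to $R^n/K_\beta$, which is finitely presented since $K_\beta$ is finitely generated, and you should say so explicitly. You should also state that $\gamma$ is chosen once, before and independently of $k\in K$. Your text currently picks $k$ first, and if $\gamma$ depended on $k$ you would only recover the trivial statement $K=\bigcup K_\gamma$.

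You can avoid this bookkeeping entirely. Since $\pi_\beta\circ s=\id_M$, the surjection $\pi_\beta$ splits. So $K/K_\beta=\ker\pi_\beta$ is a direct summand, hence a quotient, of the finitely generated module $R^n/K_\beta$, and is therefore finitely generated. Then $0\to K_\beta\to K\to K/K_\beta\to 0$ has both ends finitely generated, so $K$ is finitely generated. This uses only the surjectivity half of compactness of $M$, exactly as your Step 1 does.
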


Note that 
$$\Lex(\mathcal{A}_r(\mathcal{E})^{op},\mathcal{A}b)\cong \Rex(\mathcal{A}_r(\mathcal{E}),\mathcal{A}b^{op})\cong \Rex(\mathcal{E},\mathcal{A}b^{op})\cong \Lex(\mathcal{E}^{op},\mathcal{A}b)$$
Thus, composing this equivalence with the Yoneda embedding, we obtain that $\mathcal{A}_r(\mathcal{E})$ is a full subcategory of $\Lex(\mathcal{E}^{op},\mathcal{A}b)$. Denote this embedding by $I$.

\begin{theorem}[\cite{BB}, Lemma 4.3.]
    Let $\mathcal{E}$ be an exact category that has a right abelian envelope $\mathcal{A}_r(\mathcal{E})$. Then the embedding $i_\mathcal{E}$ is a faithful functor, and the image of the embedding $I$ coincides with the class of compact objects in $\Lex(\mathcal{E}^{op},\mathcal{A}b)$.
\end{theorem}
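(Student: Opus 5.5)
We have to show two things: that $i_\mathcal{E}$ is faithful, and that the essential image of $I$ consists exactly of the compact objects of $\Lex(\mathcal{E}^{op},\mathcal{A}b)$. The plan is to make $I$ explicit, deduce faithfulness of $i_\mathcal{E}$ from the fact that the representables $h_E=\Hom(-,E)$ are left exact, and then identify compact objects with finitely presented objects in a locally finitely presented Grothendieck category, using the lemma on compact modules as the model case.

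\textbf{Step 1: the formula for $I$.} Unwinding the chain of equivalences, an object $A\in\mathcal{A}_r(\mathcal{E})$ goes to the representable $\Hom_{\mathcal{A}_r}(-,A)$ on $\mathcal{A}_r(\mathcal{E})$, and then restricts along $i_\mathcal{E}$. So
$$I(A)=\Hom_{\mathcal{A}_r(\mathcal{E})}(i_\mathcal{E}(-),A).$$
This functor is left exact on $\mathcal{E}^{op}$ because $i_\mathcal{E}$ is right exact and $\Hom(-,A)$ turns right exact sequences into left exact ones.

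\textbf{Step 2: faithfulness of $i_\mathcal{E}$.} The Yoneda functor $\mathcal{E}\to\Lex(\mathcal{E}^{op},\mathcal{A}b)$, $E\mapsto h_E$, is right exact in the sense of the definition above. It corresponds under the universal property to a functor $\tilde h$ with $\tilde h\circ i_\mathcal{E}\cong h$. Here I must check that $h$ is right exact as a functor into the abelian category $\Lex$, whose cokernels are sheafified. Since $h$ is faithful, the factorization $\tilde h\circ i_\mathcal{E}\cong h$ forces $i_\mathcal{E}$ to be faithful. By uniqueness in the universal property, $\tilde h\cong I$, so $I(i_\mathcal{E}E)\cong h_E$.

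\textbf{Step 3: $\Lex$ is locally finitely presented.} $\Lex(\mathcal{E}^{op},\mathcal{A}b)$ is a Grothendieck category: it is a reflective localization of the presheaf category with respect to the topology of deflations. The representables $h_E$ are compact in it, because filtered colimits of left exact functors are computed pointwise; this uses that filtered colimits in $\mathcal{A}b$ commute with finite limits. Every object is a filtered colimit of cokernels of maps between finite sums of representables. By the standard argument, mirroring the lemma on modules over $R$, an object is compact if and only if it is a cokernel $\coker(h_{E_1}\to h_{E_0})$ taken in $\Lex$. For the "only if" direction, a compact object is a filtered colimit of such presentations, so its identity factors through one of them, and it is a retract of a finitely presented object. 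Finitely presented objects are closed under retracts, because cokernels of maps between compacts are compact and the idempotent-splitting argument goes through.

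\textbf{Step 4: matching with $\mathcal{A}_r(\mathcal{E})$.} The image of $I$ consists of finitely presented objects. Every $A\in\mathcal{A}_r(\mathcal{E})$ is a cokernel of a map $i_\mathcal{E}E_1\to i_\mathcal{E}E_0$. I would prove this presentation property from the universal property, by showing that the full subcategory of such cokernels is abelian and satisfies the same universal property. The functor $I$ is right exact (it corresponds to an exact functor into $\Lex$) and satisfies $I\circ i_\mathcal{E}\cong h$, so $I(A)\cong\coker(h_{E_1}\to h_{E_0})$. Conversely, any $\coker(h_{E_1}\xrightarrow{h_f}h_{E_0})$ with $f:E_1\to E_0$ is $I(\coker i_\mathcal{E}f)$; by Step 2 every such map comes from some $f$, by Yoneda. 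This gives equality of the compact objects with the image of $I$.

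\textbf{Main obstacle.} I expect the hardest part to be the presentation property in Step 4 together with exactness of $I$, since both must be extracted purely from the universal property. I would first try to show that $I$ preserves cokernels: it is a left adjoint-like extension of $h$, and it corresponds under the equivalence $\Rex(\mathcal{A}_r(\mathcal{E}),\mathcal{B})\cong\Rex(\mathcal{E},\mathcal{B})$ to a right exact functor.
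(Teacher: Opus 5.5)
\emph{Note: the paper gives no argument of its own for this statement; it only quotes \cite{BB}. Your proposal is therefore assessed on its own merits.}

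Your faithfulness argument in Step 2 is fine. The Yoneda functor $h$ is right exact (even exact) into $\Lex(\mathcal{E}^{op},\mathcal{A}b)$, because the presheaf cokernel of $h_Y\to h_Z$ is the defect of the conflation, and sheafification kills it. Then $\tilde h\circ i_\mathcal{E}\cong h$ with $h$ faithful forces $i_\mathcal{E}$ to be faithful.

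\textbf{Circular step.} The sentence ``by uniqueness, $\tilde h\cong I$, so $I(i_\mathcal{E}E)\cong h_E$'' is circular. Uniqueness applies only to a right exact $G$ already known to satisfy $G\circ i_\mathcal{E}\cong h$. Since $I$ is fully faithful, $I\circ i_\mathcal{E}\cong h$ is equivalent to $i_\mathcal{E}$ being fully faithful, so it is not a formality. Right exactness of $I$ is likewise only asserted.

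\textbf{Genuine gap.} The real gap is Step 4. The presentation property ``every $A$ is $\mathrm{coker}(i_\mathcal{E}E_1\to i_\mathcal{E}E_0)$'' is exactly the nontrivial inclusion $I(\mathcal{A}_r(\mathcal{E}))\subset\{\text{compact objects}\}$, and you leave it unproved. The route you sketch, showing that such cokernels form an abelian subcategory, runs into two problems:
\begin{itemize}
\item Its hard part is closure under kernels inside $\mathcal{A}_r(\mathcal{E})$, about which the universal property gives you no direct handle.
\item Even closure under cokernels needs to lift maps $i_\mathcal{E}E_0\to\mathrm{coker}(i_\mathcal{E}E_1'\to i_\mathcal{E}E_0')$ through $i_\mathcal{E}E_0'$. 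That is a projectivity property which $i_\mathcal{E}E_0$ does not have.
\end{itemize}
The same non-projectivity of $h_E$ in $\Lex$ leaves your retract argument in Step 3 incomplete. ``Cokernels of maps between compacts are compact'' does not give closure of finitely presented objects under retracts. The module-style lifting only works after precomposing with a deflation $E_0'\to E_0$.

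\textbf{Missing idea.} Work on the $\mathcal{A}_r(\mathcal{E})$ side of the equivalence you used to define $I$. By construction $I=R\circ Y$, where $Y\colon\mathcal{A}_r(\mathcal{E})\to\Lex(\mathcal{A}_r(\mathcal{E})^{op},\mathcal{A}b)$ is the Yoneda embedding and $R$ is the restriction equivalence along $i_\mathcal{E}$.
\begin{enumerate}
\item \emph{Image of $I$ is compact.} In both $\Lex$ categories filtered colimits are computed pointwise, since filtered colimits in $\mathcal{A}b$ are exact. So representables are compact in $\Lex(\mathcal{A}_r(\mathcal{E})^{op},\mathcal{A}b)$, and an equivalence preserves compactness. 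Hence every $I(A)$ is compact, with no presentation needed.
\item \emph{Compacts lie in the image of $I$.} A left exact $G\colon\mathcal{A}_r(\mathcal{E})^{op}\to\mathcal{A}b$ is the colimit of representables over its category of elements. This category is filtered, because $G$ turns direct sums and cokernels into products and kernels. So a compact $G$ is a retract of a representable, hence representable, since $\mathcal{A}_r(\mathcal{E})$ is abelian and therefore idempotent complete.
\item \emph{The identification $I\circ i_\mathcal{E}\cong h$.} For $F\in\Lex(\mathcal{E}^{op},\mathcal{A}b)$ with left exact extension $\bar F=R^{-1}F$, you have $\Hom(I(i_\mathcal{E}E),F)\cong\bar F(i_\mathcal{E}E)\cong F(E)\cong\Hom(h_E,F)$, naturally in $F$ and $E$. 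This proves $I\circ i_\mathcal{E}\cong h$ properly, and with it that $i_\mathcal{E}$ is faithful, in fact fully faithful.
\end{enumerate}
With this, both your Step 3 and the presentation property become unnecessary.
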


Nevertheless, an abelian envelope satisfying the above universal property is not guaranteed to exist. 

 \begin{definition}
        A module $M$ over an algebra with unity $R$ is called \textit{coherent} if every finitely generated submodule of it is finitely presented. An algebra $R$ is called coherent if it is coherent as a (right/left) module over itself. 
    \end{definition}

It is well-known that algebra is coherent iff the category $f.p.(A)$ is abelian. In fact, a category of finitely presented projective modules over non-coherent ring never has neither left nor right abelian envelope. Consider the category $\mathcal{P}(A)$ of finitely presented projective modules over $A$ with the induced exact structure. Then their abelian envelope should consist of finitely presented modules over $A$, but, as claimed before, this category is not abelian.

The following theorem provides a convenient tool for proving that a given abelian category is the abelian envelope of its full and exact subcategory:

\begin{theorem}[\cite{BB}]
\label{abenv}
    Let $\mathcal{A}$ be an abelian category, $\mathcal{E}\subset \mathcal{A}$ its full exact subcategory closed under kernels of epimorphisms. Also let any object of $\mathcal{A}$ be a quotient of an object from $\mathcal{E}$. Then $\mathcal{A}$ coincides with the right abelian envelope of $\mathcal{E}$. 
\end{theorem}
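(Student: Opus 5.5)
We prove the universal property directly. Given an abelian category $\mathcal{B}$ and a right exact functor $F:\mathcal{E}\rightarrow\mathcal{B}$, we construct an extension $\Tilde{F}:\mathcal{A}\rightarrow\mathcal{B}$ by resolutions. We then check that it is right exact, that it is unique, and that restriction along the inclusion $i:\mathcal{E}\rightarrow\mathcal{A}$ gives an equivalence $\Rex(\mathcal{A},\mathcal{B})\cong\Rex(\mathcal{E},\mathcal{B})$. First, $i$ itself is right exact: conflations of $\mathcal{E}$ are short exact sequences in $\mathcal{A}$, since $\mathcal{E}$ carries the induced exact structure.

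The key preliminary step is that every $A\in\mathcal{A}$ admits a presentation $E_1\xrightarrow{g} E_0\xrightarrow{p} A\rightarrow 0$ with $E_0,E_1\in\mathcal{E}$ in which both pieces come from conflations. Choose an epimorphism $p:E_0\twoheadrightarrow A$ with $E_0\in\mathcal{E}$; it exists by the quotient hypothesis. Its kernel $K$ lies in $\mathcal{E}$ by closure under kernels of epimorphisms, so $K\rightarrow E_0\rightarrow A$ is short exact. Repeating with $A=K$ yields a conflation $K'\rightarrow E_1\rightarrow K$ in $\mathcal{E}$, since all three terms lie in $\mathcal{E}$. We then define
$$\Tilde{F}(A)=\coker\big(F(E_1)\rightarrow F(E_0)\big).$$
By right exactness of $F$ applied to $K'\rightarrow E_1\rightarrow K$, the image of $F(E_1)\rightarrow F(E_0)$ coincides with the image of $F(K)\rightarrow F(E_0)$. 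Hence $\Tilde{F}(A)\cong\coker(F(K)\rightarrow F(E_0))$, and only the one-step presentation matters.

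Next we check independence of the choice and functoriality. Given two epimorphisms $E_0\rightarrow A$ and $E_0'\rightarrow A$, the pullback $E_0\times_A E_0'$ is the kernel of the epimorphism $E_0\oplus E_0'\rightarrow A$, so it lies in $\mathcal{E}$. It surjects onto both $E_0$ and $E_0'$, with kernels in $\mathcal{E}$. This reduces the comparison to the case where one presentation dominates the other through a deflation in $\mathcal{E}$, and there right exactness of $F$ shows the cokernels agree canonically. For a morphism $A\rightarrow A'$, we form the pullback $E_0'\times_{A'}E_0$ and obtain a third presentation of $A$ that maps to the chosen presentation of $A'$; this defines $\Tilde{F}$ on morphisms. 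Well-definedness, meaning independence of lifts up to maps factoring through the kernel, follows by the same argument. When $A\in\mathcal{E}$ we may take $E_0=A$, which gives $\Tilde{F}\circ i\cong F$.

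Right exactness of $\Tilde{F}$ is the step I expect to be the main obstacle. Given $0\rightarrow A'\rightarrow A\rightarrow A''\rightarrow 0$ in $\mathcal{A}$, I would use a horseshoe-type construction within $\mathcal{E}$. Choose $E\twoheadrightarrow A$, which induces $E\twoheadrightarrow A''$, and take $E'=E\times_{A}A'$-style kernels, namely $E'=\ker(E\rightarrow A'')\in\mathcal{E}$. This yields a map of conflations $E'\rightarrow E\rightarrow E\,(\to A'')$-type presentations compatible with the sequence. A diagram chase together with right exactness of $F$ on the relevant conflations in $\mathcal{E}$ then gives exactness of $\Tilde{F}(A')\rightarrow\Tilde{F}(A)\rightarrow\Tilde{F}(A'')\rightarrow 0$.

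Uniqueness is immediate: any right exact $G$ extending $F$ must send the presentation $F(K)\rightarrow F(E_0)\rightarrow G(A)\rightarrow 0$ to an exact sequence, forcing $G\cong\Tilde{F}$ naturally. The same reasoning shows that natural transformations extend uniquely, which gives full faithfulness of the restriction functor.
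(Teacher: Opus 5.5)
The paper does not prove this statement. It is imported from \cite{BB} and used as a black box, so there is no in-paper argument to compare with. Your proposal is a correct, self-contained proof by the standard method of extending a right exact functor along $\mathcal{E}$-presentations. The following steps are all sound:
\begin{itemize}
\item the comparison of choices via $E_0\times_A E_0'$, which is the kernel of the epimorphism $E_0\oplus E_0'\to A$ and hence lies in $\mathcal{E}$;
\item the independence of lifts;
\item uniqueness;
\item full faithfulness of restriction.
\end{itemize}
The two-step presentation is superfluous, since $K=\ker(E_0\to A)$ already lies in $\mathcal{E}$.

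The step you single out as the main obstacle is immediate from the data you already chose, though your write-up of it is garbled. Put $K=\ker(E\to A)$ and $E'=\ker(E\to A'')=E\times_A A'$. Then $A'$, $A$, $A''$ are presented by $K\to E'\to A'$, $K\to E\to A$ and $E'\to E\to A''$. The inclusion $E'\hookrightarrow E$ lifts $A'\to A$, and $\mathrm{id}_E$ lifts $A\to A''$. Setting $a=F(K\to E')$ and $b=F(E'\to E)$, the sequence $\tilde F(A')\to\tilde F(A)\to\tilde F(A'')\to 0$ is literally $\coker a\to\coker(ba)\to\coker b\to 0$. This is exact in any abelian category, being the tail of the kernel--cokernel sequence of a composite. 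No horseshoe construction and no further use of right exactness of $F$ is needed; right exactness of $F$ enters only through the well-definedness of $\tilde F$.

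You should state explicitly that ``full exact subcategory'' means $\mathcal{E}$ carries the \emph{induced} exact structure: every short exact sequence of $\mathcal{A}$ with all terms in $\mathcal{E}$ is a conflation. You use this essentially, not only for $K'\to E_1\to K$ but also in the comparison step. For $P=E_0\times_A E_0'$, you need $F(P)\to F(E_0)$ and $F(\ker(P\to A))\to F(\ker(E_0\to A))$ to be epimorphisms whose kernels are covered by $F(\ker(E_0'\to A))$. That holds precisely because the corresponding short exact sequences in $\mathcal{A}$, all of whose terms lie in $\mathcal{E}$, are conflations.

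The hypothesis cannot be weakened. Take $\mathcal{A}=\mathcal{E}=\rmod k[x]/(x^3)$ with the split exact structure. It satisfies all the other hypotheses, but its right envelope is $\rmod A_{max}$, as the paper itself observes, not $\mathcal{E}$.

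Finally, your lift-independence argument uses $F(g_1)-F(g_2)=F(g_1-g_2)$, i.e.\ additivity of $F$. This is harmless but worth saying.
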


The formalism of abelian envelopes will allow us to give a convenient description of exact structures on some additive categories.

\section{Classification of exact structures}
As already mentioned earlier, any additive category admits 2 (possibly coinciding) exact structures: maximal and minimal (split). A natural question arises: are there other exact structures? Before giving a complete answer to this question, let us consider a naive example where the study of this question turns out to be simple. Let us first introduce a useful definition.

\begin{definition}
    An additive category $\mathcal{E}$ is called a category \textit{with split idempotents} if any idempotent $\varphi :\ X\rightarrow X$, (i.e., $\varphi^2 = \varphi$) has a kernel which splits off as a direct summand in $X$, i.e., $X\cong \ker \varphi \oplus \textrm{Im }\varphi$.
\end{definition}


Now, following \cite{En}, we give a classification of exact structures for a broad class of additive categories. All categories are assumed to be skeletally small and locally small.

Let $\mathcal{E}$ be an additive $k$-linear category with split idempotents. Consider the category $\Rmod \mathcal{E} = \mathcal{F}unc^{add}(\mathcal{E}^{op},k-Vect)$ -- the category of contravariant functors into the category of vector spaces over $k$. The category $\mathcal{E}$ is naturally embedded into $\Rmod \mathcal{E}$ via the Yoneda functor: $h:\ X\mapsto \Hom(-,X)$. Note that this embedding does not send conflations to short exact sequences.

At first, we remind to well-known lemmas.
\begin{lemma}
    The category $\Rmod \mathcal{E}$ is abelian with enough projective objects. The projective modules in $\Rmod \mathcal{E}$ are exactly the direct summands of $\bigoplus_{i\in I} h_{X_i}$. 
\end{lemma}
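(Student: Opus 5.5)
The first claim is that $\Rmod \mathcal{E}$ is abelian. I will check it pointwise. Since $k\textrm{-Vect}$ is abelian, every natural transformation $\alpha:\ F\to G$ of additive functors $\mathcal{E}^{op}\to k\textrm{-Vect}$ has a kernel and a cokernel computed objectwise: $(\ker\alpha)(X)=\ker(\alpha_X)$ and $(\coker\alpha)(X)=\coker(\alpha_X)$. These are again additive functors, because kernels and cokernels in $k\textrm{-Vect}$ commute with finite direct sums. The canonical map $\coker(\ker\alpha)\to\ker(\coker\alpha)$ is an isomorphism at every object, hence an isomorphism. Arbitrary direct sums also exist and are computed objectwise.

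For projectivity of representables I will use Yoneda's lemma, $\Hom_{\Rmod\mathcal{E}}(h_X,F)\cong F(X)$, natural in $F$. A sequence $F\to G\to 0$ is exact if and only if each $F(X)\to G(X)$ is surjective. So $\Hom(h_X,-)$ is exact, and $h_X$ is projective. Direct sums of projectives are projective, since $\Hom(\bigoplus h_{X_i},-)\cong\prod\Hom(h_{X_i},-)$ and a product of surjections of vector spaces is surjective. Direct summands of projectives are projective.

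To see there are enough projectives, take any $F$. The set-theoretic issue is handled by skeletal smallness: let $X$ range over a set of representatives of isomorphism classes. Form
$$P=\bigoplus_{X}\bigoplus_{x\in F(X)} h_X\longrightarrow F,$$
where the summand indexed by $x$ maps via the Yoneda element $x$. At each $Y$, any $y\in F(Y)$ is the image of $\id_Y$ in the summand indexed by $(Y,y)$, up to the chosen isomorphism with a representative. So the map is an epimorphism.

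Finally, suppose $Q$ is projective. Applying the construction above gives an epimorphism $P\to Q$ with $P$ a sum of representables, and this epimorphism splits by projectivity of $Q$. Hence $Q$ is a direct summand of $\bigoplus h_{X_i}$. The converse was shown above.

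The only mildly delicate point is this indexing set: using skeletal smallness to make $P$ a set-indexed sum. Everything else is a routine objectwise verification.
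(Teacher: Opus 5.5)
Your proof is correct. The paper gives no argument for this lemma (it is listed among ``well-known lemmas''), and yours is the standard proof: objectwise kernels and cokernels, Yoneda for the projectivity of the $h_X$, the canonical epimorphism from a set-indexed sum of representables (using skeletal smallness), and splitting of that epimorphism when the target is projective. The one implicit point is that the Yoneda isomorphism $\Hom(h_X,F)\cong F(X)$ requires the functors in $\Rmod\mathcal{E}$ to be $k$-linear, not merely additive; that is evidently the intended reading of $\mathcal{F}unc^{add}(\mathcal{E}^{op},k\textrm{-Vect})$.
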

\begin{lemma}
\label{YE}
    The Yoneda embedding is a continuous functor, i.e., it preserves limits: $\lim(h_{X_\iota} )\cong h_{\lim {X_\iota}}$ along an arbitrary diagram $\iota: \ \mathcal{C}\rightarrow \mathcal{E}$ for which the limit exists.
\end{lemma}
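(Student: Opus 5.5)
The plan is to reduce the claim to two standard facts: limits in a functor category are computed objectwise, and each representable functor $\Hom(Y,-)$ sends limits in $\mathcal{E}$ to limits of vector spaces. Let $\iota:\ \mathcal{C}\rightarrow \mathcal{E}$ be a diagram whose limit $L=\lim X_\iota$ exists in $\mathcal{E}$, with structure morphisms $p_c:\ L\rightarrow X_c$.

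First I will record that $\Rmod \mathcal{E} = \mathcal{F}unc^{add}(\mathcal{E}^{op},k-Vect)$ has all small limits, computed pointwise. For a diagram of functors $F_c$, I set $(\lim F_c)(Y) := \lim_c F_c(Y)$ in $k-Vect$, which is possible because $\mathcal{E}$ is skeletally small and $k-Vect$ is complete. The assignment $Y\mapsto \lim_c F_c(Y)$ is again an additive functor, since limits of vector spaces commute with the linear maps $F_c(g)$. The universal property is then checked componentwise, using naturality. This is routine.

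Second, I apply the pointwise description to $h_{X_c}$. This gives $(\lim h_{X_c})(Y)=\lim_c \Hom(Y,X_c)$. The morphisms $h_{p_c}$ form a cone $h_L\rightarrow h_{X_c}$, so they induce a canonical natural transformation $\varphi:\ h_L\rightarrow \lim h_{X_c}$. Its component at $Y$ sends $f:\ Y\rightarrow L$ to the compatible family $(p_c\circ f)_c$.

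Third, I check that each $\varphi_Y$ is bijective. A compatible family $(f_c:\ Y\rightarrow X_c)_c$ is precisely a cone over $\iota$ with vertex $Y$. By the universal property of $L$ there is a unique $f:\ Y\rightarrow L$ with $p_c\circ f=f_c$ for all $c$, which gives both surjectivity and injectivity. Each $\varphi_Y$ is linear, so it is an isomorphism of vector spaces. A natural transformation that is an isomorphism in every component is an isomorphism of functors, hence $h_L\cong \lim h_{X_\iota}$.

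The only point needing care is the first step, namely that limits in $\Rmod \mathcal{E}$ really are pointwise, including the additivity of the limit functor. Beyond that, the argument is just the definition of a limit, restated as the statement that $\Hom(Y,-)$ is continuous. No results from earlier in the paper are needed beyond the definitions.
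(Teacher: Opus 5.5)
Your proof is correct: limits in $\Rmod\mathcal{E}$ are computed pointwise, and pointwise the claim is the universal property of $L=\lim X_\iota$, restated as continuity of $\Hom(Y,-)$. This is the standard argument. The paper gives no proof of this lemma at all (it is listed among the ``well-known lemmas''), so there is no different route to compare against.

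Two small remarks on your write-up:
\begin{itemize}
\item Pointwise limits exist because $k$-vector spaces form a complete category, not because $\mathcal{E}$ is skeletally small. Smallness of $\mathcal{E}$ only guarantees that $\Rmod\mathcal{E}$ has $\Hom$-sets.
\item You can skip your first step entirely by checking directly that the cone $(h_{p_c}\colon h_L\to h_{X_c})_c$ is universal. Given any cone $\alpha_c\colon F\to h_{X_c}$, each $x\in F(Y)$ yields a cone $(\alpha_{c,Y}(x))_c$ over $\iota$ with vertex $Y$, hence a unique $f_x\colon Y\to L$. Uniqueness makes $x\mapsto f_x$ linear and natural in $Y$, so it gives the unique factorization $F\to h_L$. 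This version also covers diagrams $\mathcal{C}$ that are not small, which ``all small limits'' does not literally address. In your setting the issue is harmless anyway, since your third step shows that the pointwise limit is in bijection with the set $\Hom(Y,L)$.
\end{itemize}
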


\begin{definition}
    We say that a module $M\in \Rmod \mathcal{E}$ is \textit{finitely generated} if there exists $X\in \mathcal{E}$ such that $h_X$ covers $M$. The category of finitely generated modules is denoted by $\rmod \mathcal{E}$.
\end{definition}
Note that, generally speaking, this category is not abelian. Nevertheless, it is closed under extensions, hence it is exact. Indeed, if $h_X$ covers $M$, and $h_Y$ covers $N$, then their direct sum $h_X \oplus h_Y$ covers any extension $K$:
$$\xymatrix{
0 \ar[r] & h_X \ar [r]  \ar@{->>}[d] & h_X\oplus h_Y \ar[r]\ar@{->>}[d]& h_Y\ar[r] \ar@{->>}[d]& 0 \\
0 \ar[r] & M \ar[r] & K \ar[r] & N\ar[r] & 0}$$

From lemma \ref{YE}, in particular, it follows that the Yoneda embedding preserves all kernels. Then the following statement holds:
\begin{lemma}
    The following statements are equivalent:
    \begin{itemize}
        \item The sequence $X\xrightarrow{f} Y\xrightarrow{g} Z$ is a kernel-cokernel sequence in $\mathcal{E}$.
    \end{itemize} The sequence \begin{equation}
    \label{def}
    0 \rightarrow h_X\xrightarrow{h_f} h_Y\xrightarrow{h_g} h_Z \rightarrow M \rightarrow 0,
    \end{equation}
    where $M = \coker h_g$, is exact, $\Ext{0,1}(M,\mathcal{E}) = 0$, and $\pd M = 2$.
\end{lemma}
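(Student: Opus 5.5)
The plan is to read the resolution \eqref{def} as a projective resolution of $M$ and translate each homological condition on $M$ into a universal property in $\mathcal{E}$ via the Yoneda lemma. Throughout I read $\Ext{0,1}(M,\mathcal{E})=0$ as $\Hom(M,h_E)=0$ and $\Ext{1}(M,h_E)=0$ for every $E\in\mathcal{E}$. I read $\pd M = 2$ as $\pd M\le 2$. The value may be smaller: if the conflation splits, then $M=0$.

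Two basic facts will be used repeatedly. First, each representable $h_X$ is projective in $\Rmod\mathcal{E}$, since $\Hom(h_X,N)\cong N(X)$ is exact in $N$. Second, $\Hom(h_X,h_E)\cong\Hom(X,E)$. Hence, whenever $0\to h_X\to h_Y\to h_Z\to M\to 0$ is exact, it is a projective resolution of $M$ of length at most $2$. In particular $\pd M\le 2$ holds automatically. Moreover $\Ext{i}(M,h_E)$ is the cohomology of
$$0\to \Hom(Z,E)\xrightarrow{g^*}\Hom(Y,E)\xrightarrow{f^*}\Hom(X,E)\to 0 ,$$
so that $\Hom(M,h_E)=\ker g^*$ and $\Ext{1}(M,h_E)=\ker f^*/\mathrm{im}\, g^*$.

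\emph{Forward direction.} Assume $f=\ker g$ and $g=\coker f$. By Lemma \ref{YE} the Yoneda embedding preserves kernels, so $h_f=\ker h_g$. This makes $0\to h_X\to h_Y\to h_Z$ exact, and \eqref{def} is exact by the definition of $M$. Since $g$ is a cokernel, it is an epimorphism, so $g^*$ is injective for every $E$ and $\Hom(M,h_E)=0$. Exactness at $\Hom(Y,E)$ says that every $u:Y\to E$ with $uf=0$ factors through $g$. This is exactly the universal property of $g=\coker f$, so $\Ext{1}(M,h_E)=0$. Together with the first paragraph, this gives $\pd M\le 2$.

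\emph{Converse.} Exactness of $0\to h_X\to h_Y\to h_Z$ means that for every $T$, the sequence $0\to\Hom(T,X)\to\Hom(T,Y)\to\Hom(T,Z)$ is exact. This is precisely the statement that $f=\ker g$. The vanishing of $\Ext{0}$ and $\Ext{1}$ against every $h_E$ says that
$$0\to\Hom(Z,E)\to\Hom(Y,E)\to\Hom(X,E)$$
is exact for all $E$. That is the universal property of $g=\coker f$. The condition on $\pd M$ is then not used beyond being consistent with the length-$2$ resolution.

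The main obstacle is not computational. It is making sure that $\Ext$ in $\Rmod\mathcal{E}$ may legitimately be computed from \eqref{def}, which rests on the projectivity of the representables, and fixing the intended meaning of the conditions "$\Ext{0,1}(M,\mathcal{E})=0$" and "$\pd M=2$". Once these are settled, both directions reduce to restating kernel and cokernel universal properties as exactness of $\Hom$-sequences.
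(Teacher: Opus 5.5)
Your proof is correct and follows the paper's route. The kernel half uses the Yoneda embedding preserving kernels. The cokernel half applies $\Hom(-,h_E)$ to the length-two representable resolution, giving $\Hom(M,h_E)=\ker g^*$ and $\Ext{1}(M,h_E)=\ker f^*/\mathrm{im}\, g^*$, and the converse reverses these equivalences. Your explicit cohomology computation of $\Ext{1}$ is a more precise version of the paper's shortcut through $\Ext{1}(h_Z,\mathcal{E})=0$. Likewise, your reading of $\pd M = 2$ as $\pd M\le 2$ is the right fix, since split conflations give $M=0$.
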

\begin{definition}
    A module $M$ satisfying the equivalent conditions of the lemma will be called \textbf{2-regular} or a \textbf{(covariant) defect} of the sequence $X \rightarrow Y \rightarrow Z$.
\end{definition}
\begin{proof}
    ($\Rightarrow$) First, we show that the sequence 
    $$0 \rightarrow h_X\xrightarrow{h_f} h_Y\xrightarrow{h_g} h_Z$$
    is exact. Exactness at the first term is obvious from the continuity of the Yoneda embedding. Next, let $h\in \Hom(B,Y) = h_Y$ lie in the kernel of $h_g$. Then $g\circ h = 0$, hence, by the definition of a kernel, $h$ factors uniquely through $X$, so it lies in the image of $h_f$. The exactness at the second term follows from the fact that the composition $g\circ f$ is zero.

    Now let $M = \coker h_g$. Applying the functor $\Hom(-,\mathcal{E})$, which is the composition of the Yoneda embedding and the Hom functor in the category $\Rmod \mathcal{E}$, to sequence (\ref{def}), we obtain:
    $$0 \rightarrow \Hom(M,\mathcal{E})\rightarrow \Hom(h_Z,\mathcal{E}) \rightarrow \Hom(h_Y,\mathcal{E}) \rightarrow \Hom(h_X,\mathcal{E})$$
    Again, due to the continuity of the contravariant Yoneda embedding, the map $\Hom(h_Z,\mathcal{E}) \rightarrow \Hom(h_Y,\mathcal{E})$ is a monomorphism, hence $\Hom(M,\mathcal{E}) = 0$. Similarly, applying $\Ext{1}(-,\mathcal{E})$ we have the exact sequence $0 \rightarrow \Ext{1}(M,\mathcal{E})\rightarrow \Ext{1}(h_Z,\mathcal{E}) = 0$. From this, it is obvious that $\Ext{1}(M,\mathcal{E}) = 0$, as required.

    ($\Leftarrow$) Since the projective dimension of $M$ is 2, there exists a projective resolution 
    $$0 \rightarrow h_X\xrightarrow{h_f} h_Y\xrightarrow{h_g} h_Z \rightarrow M \rightarrow 0$$
    for some objects $X$, $Y$, $Z\in \mathcal{E}$. All arguments in the previous step relied on necessary and sufficient conditions, so, repeating them verbatim and using that the Yoneda embedding is, in fact, an embedding, we obtain that the pair $X\xrightarrow{f} Y \xrightarrow{g} Z$ is a kernel-cokernel pair.
\end{proof}

Note that a dual statement holds for the contravariant Yoneda embedding. Denote $h^X = \Hom(X,-)$. It is easy to see that applying the functor $\Hom(-,\mathcal{E})$ to sequence (\ref{def}) yields:
$$0 \rightarrow h^Z\xrightarrow{h^g} h^Y\xrightarrow{h^f} h^X \rightarrow \Ext{2}(M,\mathcal{E}) \rightarrow 0$$

Denote by $Def(\mathcal{E})\subset \rmod\mathcal{E}$ the full subcategory of all defects of the category $\mathcal{E}$. Obviously, the functor $\Ext{2}(-,A)$ gives a duality equivalence between $Def(\mathcal{E})$ and $Def(\mathcal{E}^{op})\subset \mathcal{E}\lmod$.

Let $\epsilon$ be an exact structure on $\mathcal{E}$. Denote by $Def_\epsilon(\mathcal{E})\subset Def(\mathcal{E})$ the full subcategory whose objects are defects of conflations in the exact category $(\mathcal{E},\epsilon)$. This category will be called the category of defects of the exact structure. 
\begin{theorem}[\cite{En}, Theorem 2.7]
\label{Classif}
    \begin{enumerate}
        \item The subcategory $Def_\epsilon \mathcal{E}$ is a Serre subcategory in $\rmod A$. 
        \item Exact structures on the category $\mathcal{E}$ are in one-to-one correspondence with Serre subcategories $\mathcal{S}$ of the category $\rmod\mathcal{E}$ lying in the category $Def(\mathcal{E})$ and such that $\Ext{2}(\mathcal{S},\mathcal{E})$ is a Serre subcategory in the category $\mathcal{E}\lmod$. The inverse map is given as follows. To a Serre subcategory $\mathcal{S}$, we associate the exact structure
$$\epsilon(\mathcal{S}) = \{X\xrightarrow{f} Y \xrightarrow{g} Z \mbox{ -- a kernel-cokernel sequence such that }\coker(h_g)\in\mathcal{S}\}.$$
    \end{enumerate}
\end{theorem}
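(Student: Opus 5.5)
Part 1 will be proved by checking the three Serre conditions directly on defects. Part 2 will then be proved by showing that the two assignments $\epsilon \mapsto Def_\epsilon(\mathcal{E})$ and $\mathcal{S}\mapsto \epsilon(\mathcal{S})$ are well defined and mutually inverse.

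\textbf{Part 1.} Let $M=\coker h_g$ be the defect of a conflation $X\xrightarrow{f}Y\xrightarrow{g}Z$.
\begin{itemize}
\item \emph{Quotients.} Let $M\to M''$ be an epimorphism with $M''$ a defect of a kernel-cokernel pair $X''\to Y''\to Z''$. Projectivity of $h_{Z''}$ and the Yoneda lemma lift $h_Z\to M\to M''$ to a morphism $Z\to Z''$. We then compare $Y''\to Z''$ with the pullback of $g$ along this morphism. The pullback of a deflation is a deflation, and a direct sum with a split sequence stays a conflation. From this we aim to show that $g''$ is a deflation, using the standard fact that $g''\circ s$ being a deflation forces $g''$ to be one when $\mathcal{E}$ has split idempotents (the obscure axiom).
\item \emph{Subobjects.} The same argument is applied dually through the duality $\Ext{2}(-,\mathcal{E})$ between $Def(\mathcal{E})$ and $Def(\mathcal{E}^{op})$. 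It converts subobjects into quotients and conflations of $\mathcal{E}$ into conflations of $\mathcal{E}^{op}$.
\item \emph{Extensions.} Given $0\to M'\to M\to M''\to 0$ with $M',M''\in Def_\epsilon$, the horseshoe lemma builds a length-two projective resolution of $M$ from the resolutions of $M'$ and $M''$. This produces a kernel-cokernel sequence whose deflation we realize as a composition of a pulled-back deflation and a pushed-out one. We then use the closure of deflations under composition.
\end{itemize}

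\textbf{Part 2.} We must show that $\Ext{2}(Def_\epsilon,\mathcal{E})$ is Serre. This is Part 1 applied to $(\mathcal{E}^{op},\epsilon^{op})$, since $\Ext{2}$ exchanges covariant and contravariant defects.

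Conversely, let $\mathcal{S}$ satisfy both Serre conditions. We verify the axioms for $\epsilon(\mathcal{S})$:
\begin{itemize}
\item Split sequences have zero defect, which lies in $\mathcal{S}$.
\item Pullback stability: the defect of a pulled-back sequence is the image of $h_{Z'}\to h_Z\to M$, hence a subobject of $M$, hence in $\mathcal{S}$. Pushouts are handled dually, using the condition on $\Ext{2}(\mathcal{S},\mathcal{E})$.
\item Compositions: for deflations $d_1,d_2$, the defect of $d_2d_1$ sits in an exact sequence $\coker h_{d_1}\to\coker h_{d_2d_1}\to\coker h_{d_2}\to 0$. It is therefore an extension of a quotient of a member of $\mathcal{S}$ by a member of $\mathcal{S}$, and so lies in $\mathcal{S}$. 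We also need to show that $d_2d_1$ is again a cokernel; this follows from the $2$-regularity characterization in the preceding lemma, since $\mathcal{S}\subset Def(\mathcal{E})$ and extensions of $2$-regular modules are $2$-regular by the long exact $\Ext$ sequence.
\end{itemize}

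That the two assignments are mutually inverse is immediate: a kernel-cokernel pair is determined up to isomorphism by its defect, since a minimal projective resolution is unique up to split summands.

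The main obstacle is the quotient-closure step in Part 1, where we must show that $g''$ is a deflation. We will try to exhibit $g''$, up to a split summand, as a pullback of $g$ plus a split deflation, and conclude via the obscure axiom.
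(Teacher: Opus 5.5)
The paper gives no proof of this statement (it is quoted from Enomoto, Theorem 2.7), so your outline has to stand on its own. The architecture is right. The horseshoe argument for extensions works, since the upper-triangular horseshoe differential factors as $g'\oplus 1$, then an automorphism, then $1\oplus g''$. The $\Ext{2}$-duality reduction at the start of Part 2 is also fine. But in two places you assume the existence of exactly the objects whose existence is the content of the theorem.

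\emph{Part 1 (quotients and subobjects).} For quotients you only treat an $M''$ already known to be the defect of a kernel--cokernel pair. For subobjects you invoke the duality $\Ext{2}(-,\mathcal{E})$. That duality is only defined on $Def(\mathcal{E})$, and it yields an epimorphism $\Ext{2}(M,\mathcal{E})\to\Ext{2}(M',\mathcal{E})$ only once $\Ext{3}(M/M',\mathcal{E})$ is under control. Serre closure in $\rmod\mathcal{E}$, however, concerns arbitrary submodules and finitely presented quotients, and showing that these are defects at all is what must be proved. The paper asserts that $Def(\mathcal{E})$ itself is Serre only for quasi-abelian $\mathcal{E}$. (Here $\rmod\mathcal{E}$ must mean finitely presented modules, as in Enomoto: a quotient of a defect by a non-finitely-generated submodule is not a defect.)

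Both closures have direct proofs with tools you already use:
\begin{itemize}
\item If $M'\subseteq M=\coker h_g$ is finitely generated, choose $h_{Z'}\twoheadrightarrow M'$ and lift it to $\zeta\colon Z'\to Z$. The pullback of $g$ along $\zeta$ is an $\epsilon$-conflation whose defect is exactly $M'$ (your own Part 2 computation).
\item For a finitely presented quotient $M/M'$, the morphism $(g,\zeta)\colon Y\oplus Z'\to Z$ has a kernel (essentially that pullback) and restricts to $g$ on $Y$. So it is an $\epsilon$-deflation by the obscure axiom, and $\coker h_{(g,\zeta)}\cong M/M'$.
\end{itemize}
Your own quotient argument also works once the lift goes the right way. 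Projectivity of $h_{Z''}$ gives $\eta\colon Z''\to Z$. The pullback $g'$ of $g$ along $\eta$ satisfies $\mathrm{Im}\,h_{g'}\subseteq\ker(h_{Z''}\to M'')=\mathrm{Im}\,h_{g''}$, so $g'=g''s$ and the obscure axiom applies directly, with no split summands. That same argument, with $M''\cong M$, is what proves the two assignments are mutually inverse. ``A kernel--cokernel pair is determined up to isomorphism by its defect'' holds only up to adding split pairs, and minimal resolutions need not exist outside the Krull--Schmidt setting.

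\emph{Part 2 (axioms for $\epsilon(\mathcal{S})$).} $\mathcal{E}$ is merely additive with split idempotents. So the pullback of $g\in\epsilon(\mathcal{S})$ along an arbitrary $\zeta\colon Z'\to Z$, and the kernel of $d_2d_1$, need not exist a priori. The axioms demand them, and your argument takes them for granted. This is where closure of $\mathcal{S}$ under \emph{quotients} is indispensable; you only use subobject closure for the pullback axiom.
\begin{itemize}
\item The functor $h_Y\times_{h_Z}h_{Z'}$ is, up to sign, the kernel of $h_{Y\oplus Z'}\to h_Z$. Its cokernel is $M/M'$ with $M'=\mathrm{Im}(h_{Z'}\to M)$.
\item Since $M/M'\in\mathcal{S}\subseteq Def(\mathcal{E})$ has projective dimension at most $2$, Schanuel's lemma makes this kernel a direct summand of a representable functor. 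It is therefore representable because idempotents split, and that representing object is the pullback.
\item Only then does your computation show that the pulled-back pair is a kernel--cokernel pair with defect $M'\in\mathcal{S}$.
\item In the same way, $\ker h_{d_2d_1}$ is a second syzygy of $\coker h_{d_2d_1}\in\mathcal{S}$, hence representable. This is what the 2-regularity lemma needs before it can tell you that $d_2d_1$ is a cokernel.
\end{itemize}
Pushouts and inflations then follow dually from the Serre property of $\Ext{2}(\mathcal{S},\mathcal{E})$.
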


In the case where the category $\mathcal{E}$ is quasi-abelian, the defect category $Def(\mathcal{E})$ itself is a Serre subcategory in $\rmod \mathcal{E}$. Moreover, the symmetric condition that $\Ext{2} (\mathcal{S},\mathcal{E})$ is a Serre subcategory in $\mathcal{E}\lmod$ is automatically satisfied. Thus, for quasi-abelian categories, it suffices to study only Serre subcategories in $Def(\mathcal{E})$.

\section{Categories of finite type}
In the previous chapter, a description of all exact structures on an additive category with split idempotents was given. Unfortunately, in practice, the application of this classification is significantly hindered. However, consider a special case where the description of exact structures can be given in concrete and internal terms of the category $\mathcal{E}$.
\begin{definition}
    An object $A\in \mathcal{E}$ of an additive category is called an \textit{additive generator} if the category of objects that are direct summands of finite direct sums of $A$ coincides with $\mathcal{E}$.
\end{definition}

\begin{definition}
    An additive category $\mathcal{E}$ is called a category of \textit{finite type} if it possesses an additive generator.
\end{definition}

\begin{definition}
    An additive category $\mathcal{E}$ is called \textit{Krull-Schmidt} if every object decomposes into a finite direct sum of indecomposable objects, and the endomorphism rings of indecomposable objects are local.
\end{definition}
Note that this decomposition is unique up to isomorphism.

It is easy to see that a Krull-Schmidt category is of finite type if and only if it has a finite number of indecomposable objects. The additive generator of this category is their direct sum.

Let $\mathcal{E}$ be a $\Hom$-finite Krull-Schmidt category with a finite number of indecomposable objects $\{A_i\}_{i=0}^N$. Denote $E = \End(\bigoplus_{i=0}^N A_i)$ -- the endomorphism algebra of the additive generator of this category, and $\mathcal{A} = \rmod E$ -- the category of right modules over this algebra. From the Krull-Schmidt condition on the original category, the equivalence $\mathcal{E}\cong \mathcal{P}_\mathcal{A}$ follows automatically. The equivalence is given by the Yoneda embedding $E\mapsto \Hom(-,E) = h_E$. 

\begin{note}
    Under the assumptions of the previous paragraph, if the category $\mathcal{E}$ admits kernels of epimorphisms, then by Theorem \ref{abenv}, the category $\rmod E$ coincides with the right abelian envelope of the category $\mathcal{E}$. In particular, $\mathcal{D}^{-}\mathcal{E}\cong \mathcal{D}^{-}(\rmod E)$.
\end{note}

In the case where the category is of finite length and has a finite number of simple objects, any Serre subcategory is uniquely determined by the simple objects contained in it and is their closure under extensions. A natural question arises: which kernel-cokernel sequences correspond to the simple modules lying in the category $Def(\mathcal{E})$? We preface the answer to this question with a definition:
\begin{definition}
     In an additive category, a kernel-cokernel pair $A \xrightarrow{f} B \xrightarrow{g} C$ is called an \textbf{almost split} or \textbf{Auslander-Reiten (AR) sequence} if it is not split, but any morphism $\varphi: \ X\rightarrow C$ that is not a split epimorphism (i.e., does not admit a section $s$ such that $\varphi\circ s = \id_C$) factors through the object $B$:
    $$\xymatrix{
    A \ar[r]^{f}& B \ar[r]^{g}& C \\
     & X \ar[ru]_{\varphi} \ar @{.>}[u]& \\
    }$$
    and, dually, any morphism $\psi : \ A\rightarrow Y$ that is not a split monomorphism extends to a morphism from $B$:
    $$\xymatrix{
    &Y  & & \\
    A \ar[ru]^{\psi} \ar[r]^{f}& B \ar @{.>}[u] \ar[r]^{g}& C \\
    }$$
\end{definition}

It is obvious that an almost split sequence cannot end in a projective object and cannot begin with an injective object. 

\begin{lemma}
    Let $A\xrightarrow{i}B\xrightarrow{\pi} C$ be an almost split sequence. Then $A$ and $C$ are indecomposable.
\end{lemma}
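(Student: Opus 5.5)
We prove that $C$ is indecomposable; the statement for $A$ is dual, using the second half of the definition of an almost split sequence with the roles of split epimorphisms and split monomorphisms exchanged. We work in the Krull--Schmidt setting of this section, so $C$ splits as a finite sum of indecomposables. Note that $C\neq 0$: if $C=0$, then $f$ would be an isomorphism (a kernel of the zero map $B\to 0$), and the sequence would be split, contrary to the definition.

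Suppose, for contradiction, that $C\cong C_1\oplus C_2$ with both $C_1,C_2$ nonzero. Let $\iota_j: C_j\to C$ and $p_j:C\to C_j$ be the canonical inclusions and projections.

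First we check that each $\iota_j$ is not a split epimorphism. If $\iota_1$ had a section $s$ with $\iota_1 s=\id_C$, then $\iota_1$ would be an epimorphism. But $\iota_1 p_1$ and $\id_C$ agree after precomposition with $\iota_1$, so $\iota_1p_1=\id_C$. Applying this to $\iota_2$ gives $\iota_2=\iota_1p_1\iota_2=0$, contradicting $C_2\neq 0$. The same argument applies to $\iota_2$.

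By the almost split property, each $\iota_j$ therefore factors through $g$: there are $t_j:C_j\to B$ with $g t_j=\iota_j$. Set $t=t_1p_1+t_2p_2:C\to B$. Then
$$g t = \iota_1p_1+\iota_2p_2=\id_C,$$
so $g$ is a split epimorphism.

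It remains to deduce that the sequence splits, which is the only step needing care. Since $f=\ker g$ and $g\,(\id_B - tg) = g-g=0$, the map $\id_B-tg$ factors as $f r$ for a unique $r:B\to A$. Then $f r f = f - t g f = f$, and since $f$ is a monomorphism (being a kernel), $rf=\id_A$. Thus $f$ is a split monomorphism and $B\cong A\oplus C$ compatibly with $f$ and $g$. This contradicts the assumption that the sequence is not split, so $C$ is indecomposable.

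For $A$ we argue dually. If $A\cong A_1\oplus A_2$ with nonzero summands, the projections $p_j:A\to A_j$ are not split monomorphisms, so by the second property of an almost split sequence they extend along $f$: there are $u_j:B\to A_j$ with $u_jf=p_j$. Then $u=\iota_1u_1+\iota_2u_2:B\to A$ satisfies $uf=\id_A$, so $f$ is a split monomorphism. Using $g=\coker f$ in the same way as above, the sequence splits, which is again a contradiction.
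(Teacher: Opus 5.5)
Your proof is correct and follows the same idea as the paper's: the inclusions of proper direct summands of $C$ are not split epimorphisms, so they factor through $B$ and assemble into a section of $g$, which forces the sequence to split. You also supply details the paper leaves implicit: why those inclusions are not split epimorphisms, why a section of $g$ yields a splitting via $f=\ker g$, and the dual argument for $A$. Your two-summand version does not actually need the Krull--Schmidt hypothesis.
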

\begin{proof}
    Suppose $C = \bigoplus_{i=0}^n C_i$, with $C_i$ indecomposable. Then each inclusion of a direct summand must factor through $B$, hence $C$ splits off as a direct summand in $B$, contradicting the definition.
\end{proof}

\begin{lemma}[Auslander, 1978]
    \begin{enumerate}
    Let $\mathcal{E}$ be a Krull-Schmidt category.
        \item Let $A_1\longrightarrow B_1 \longrightarrow C$ and $A_2\longrightarrow B_2 \longrightarrow C$ be two Auslander-Reiten sequences ending in the same indecomposable non-projective object $C$. Then they coincide, i.e., $A_1\cong A_2$ and $B_1\cong B_2$.
        \item Let $A\longrightarrow B_1 \longrightarrow C_1$ and $A\longrightarrow B_2 \longrightarrow C_2$ be two Auslander-Reiten sequences starting with the same indecomposable non-injective object $A$. Then they coincide, i.e., $A_1\cong A_2$ and $B_1\cong B_2$.
    \end{enumerate}
\end{lemma}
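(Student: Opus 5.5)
Parts (1) and (2) are dual, so I will prove (1) in detail and obtain (2) by passing to $\mathcal{E}^{op}$, where almost split sequences ending in $C$ become almost split sequences starting in $C$. The plan is to build morphisms of sequences in both directions, each restricting to the identity on $C$, and then show that the induced endomorphisms of $A_1$ and $A_2$ are automorphisms. For the last step I will use that $A_1$ and $A_2$ are indecomposable (preceding lemma) and hence have local endomorphism rings in a Krull--Schmidt category.

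\emph{Constructing the morphisms.} Write the sequences as $A_j\xrightarrow{f_j}B_j\xrightarrow{g_j}C$ for $j=1,2$. Since the second sequence is not split, $g_2$ is not a split epimorphism. The almost split property of the first sequence then gives $\beta:B_2\to B_1$ with $g_1\beta=g_2$. Because $g_1\beta f_2=g_2f_2=0$ and $f_1=\ker g_1$, the morphism $\beta f_2$ factors uniquely as $f_1\alpha$ for some $\alpha:A_2\to A_1$. This is a morphism of conflations restricting to $\id_C$. By symmetry there is also $(\alpha',\beta')$ from the first sequence to the second. Composing gives an endomorphism $(\alpha\alpha',\beta\beta',\id_C)$ of the first sequence.

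\emph{Key step: $\alpha\alpha'$ is invertible.} Set $\gamma=\alpha\alpha'\in\End(A_1)$, which is local. Suppose $\gamma$ were not invertible. Then $\gamma$ is not a split monomorphism, since in a local ring a left-invertible element is invertible. The second half of the almost split property then gives $t:B_1\to A_1$ with $t f_1=\gamma$. Put $\delta=\beta\beta'$. Since $g_1(\id_{B_1}-\delta)=g_1-g_1=0$, we have $\id_{B_1}-\delta=f_1 s$ for some $s:B_1\to A_1$. Also $(\id-\delta)f_1=f_1-f_1\gamma=f_1(\id-\gamma)$, so $f_1 s f_1=f_1(\id-\gamma)$. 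As $f_1$ is monic, $sf_1=\id-\gamma$. Since $\gamma$ lies in the maximal ideal, $\id-\gamma$ is invertible. Hence $f_1$ is a split monomorphism, with retraction $(\id-\gamma)^{-1}s$, which contradicts the non-splitness of the first sequence. Therefore $\alpha\alpha'$ is an automorphism, and by symmetry so is $\alpha'\alpha$. Consequently $\alpha$ is an isomorphism and $A_1\cong A_2$.

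\emph{Conclusion.} The triple $(\alpha,\beta,\id_C)$ is a morphism of conflations whose outer components are isomorphisms. By the five-lemma in the exact category, or directly by the universal properties of kernels and cokernels, $\beta$ is an isomorphism, so $B_1\cong B_2$. I expect the main obstacle to be the key step above: it needs locality of $\End(A_1)$ together with the rewriting of $\id-\delta$ through the kernel to produce a retraction of $f_1$. One could instead simply use $\gamma=\id-sf_1$ directly, since it is not obviously the case that $\gamma$ itself factors through $f_1$. The argument for (2) is the dual one.
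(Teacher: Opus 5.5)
The paper gives no proof of this lemma; it only attributes it to Auslander. The natural benchmark is therefore the standard argument of \cite{Aus}, and your first two steps reproduce it correctly. The comparison morphisms $(\alpha,\beta,\id_C)$ and $(\alpha',\beta',\id_C)$ exist. The identity $sf_1=\id_{A_1}-\gamma$, together with locality of $\End(A_1)$, forces $\gamma=\alpha\alpha'$ to be invertible, so $\alpha$ is an isomorphism. The morphism $t$ with $tf_1=\gamma$ is never used. Contrary to your closing remark, $\gamma$ does factor through $f_1$ (that is exactly what $t$ says), and $t$ would give the contradiction at once via $\id_{A_1}=(t+s)f_1$.

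\textbf{The gap is the final step.} In this lemma $\mathcal{E}$ is only a Krull--Schmidt category, and AR sequences are just kernel--cokernel pairs. Nothing places both sequences in a common exact structure; that AR pairs lie in the maximal exact structure would need its own proof. Moreover, the short five lemma does \emph{not} follow from the universal properties of kernels and cokernels. Take the additive hull of the $k$-linear category of the quiver $1\xrightarrow{u}2\xrightarrow{b}2'\xrightarrow{v}3$ modulo $vbu=0$. Checking on indecomposables, both $1\xrightarrow{u}2\xrightarrow{vb}3$ and $1\xrightarrow{bu}2'\xrightarrow{v}3$ are kernel--cokernel pairs, and $(\id,b,\id)$ is a morphism between them. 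Yet $b$ is not invertible, since $\Hom(2',2)=0$. In \cite{Aus} the five lemma is legitimate only because one works in an abelian module category.

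The repair uses your own computation. Once $\gamma$ is invertible, $\beta\beta'=\id_{B_1}-f_1s$ has two-sided inverse $\id_{B_1}+f_1\gamma^{-1}s$; this follows by expanding with $sf_1=\id_{A_1}-\gamma$. Symmetrically, $\beta'\beta$ is an automorphism of $B_2$. So $\beta$ has both a left and a right inverse, and $B_1\cong B_2$.
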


In what follows, we will be interested in the first historical case where Auslander-Reiten sequences exist. 

\begin{theorem}[Auslander, 1978]
\label{Seq}
    Let $A$ be an Artin algebra over a commutative Artinian ring $k$ (for example, a field), i.e., it is a finitely generated $k$-module. Then
    \begin{enumerate}
        \item For any indecomposable non-projective right $A$-module $M$, there exists an Auslander-Reiten sequence 
        $$0\longrightarrow A \longrightarrow B \longrightarrow M \longrightarrow 0$$
        \item For any indecomposable non-injective right $A$-module $M$, there exists an Auslander-Reiten sequence 
        $$0\longrightarrow M \longrightarrow B \longrightarrow C \longrightarrow 0$$
    \end{enumerate}
\end{theorem}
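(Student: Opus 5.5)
The plan is the classical Auslander–Reiten argument, run through the Auslander–Reiten translate and duality. I prove part 1 and deduce part 2 by duality. Let $D=\Hom_k(-,J)$, with $J$ the injective envelope of $k/\mathrm{rad}\,k$; it gives a duality between $\rmod A$ and $A\lmod$. Let $(-)^*=\Hom_A(-,A)$ and $\mathrm{Tr}$ the transpose. For an indecomposable non-projective $M$ with minimal projective presentation $P_1\xrightarrow{p}P_0\to M\to 0$, set $\mathrm{Tr}\,M=\coker(p^*)$ and $\tau M = D\,\mathrm{Tr}\,M$. I will check that $\mathrm{Tr}$ is well defined up to projective summands and that minimality makes $\mathrm{Tr}\,M$ indecomposable non-projective. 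It follows that $\tau M$ is indecomposable non-injective, with local endomorphism ring.

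The candidate sequence lives in $\Ext{1}(M,\tau M)$. The key input is the Auslander–Reiten formula
$$\Ext{1}_A(M,N)\cong D\,\underline{\Hom}_A(N,\tau M)\cong D\,\overline{\Hom}_A(\mathrm{Tr}\,M, DN)\dots,$$
in the form $\Ext{1}(M,N)\cong D\,\overline{\Hom}(N,\tau M)$, natural in $N$. Here $\overline{\Hom}$ denotes morphisms modulo those factoring through injectives. I derive it from the presentation: applying $\Hom(-,N)$ and using $\Hom(P,N)\cong N\otimes_A P^*$ for projective $P$ identifies $\Ext{1}(M,N)$ with a homology of $N\otimes P_\bullet^*$. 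Comparing with $\mathrm{Tor}_1(N,\mathrm{Tr}\,M)$ and applying $D$ with adjunction then gives the formula. Taking $N=\tau M$, the right-hand side is the dual of the local algebra $\overline{\End}(\tau M)$, which is nonzero since $\tau M$ is non-injective. Its simple top gives a nonzero element $\eta\in\Ext{1}(M,\tau M)$. This element generates the socle of $\Ext{1}(M,\tau M)$ as an $\End(\tau M)$-module, and equivalently, by the symmetric argument, as an $\End(M)$-module.

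Let $\eta$ be represented by $0\to\tau M\to B\xrightarrow{g} M\to 0$; it is nonsplit since $\eta\neq0$. To check the right almost-split property, take a non-split-epi $\varphi: X\to M$. Lifting $\varphi$ through $g$ is equivalent to $\varphi^*\eta=0$ in $\Ext{1}(X,\tau M)$. By naturality of the formula in the first variable (functoriality of $\mathrm{Tr}$), this reduces to showing that $\eta$ is annihilated by every non-invertible endomorphism of $M$. One first reduces to $X$ indecomposable and then composes with maps $M\to X$, using $\Hom$-finiteness and that $\mathrm{rad}\End(M)$ is nilpotent. That annihilation holds because $\eta$ is chosen in the socle and $\mathrm{rad}\End(M)$ acts nilpotently.

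The left almost-split property follows by the dual argument, using that $\tau\,\mathrm{Tr}D$ is inverse to $\tau$ on non-projective/non-injective indecomposables. Alternatively, one observes directly that $\eta$ is also a socle element for the $\End(\tau M)$-action. Part 2 is then obtained by applying part 1 to $\mathrm{Tr}D\,M$, or by applying $D$ and passing to left modules.

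I expect the main obstacle to be proving the Auslander–Reiten formula with naturality in both variables. In particular, the socle element must behave correctly on both sides simultaneously, and the factorization criterion for arbitrary non-split-epi $\varphi$ needs care. I would handle this by the standard reduction: non-split-epi maps into an indecomposable $M$ with local endomorphism ring form the radical, so one works with $\mathrm{rad}(-,M)$.
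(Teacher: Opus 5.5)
The paper gives no proof of this theorem; it is quoted from Auslander--Reiten--Smal{\o}. Your outline (transpose, $\tau=D\,\mathrm{Tr}$, Auslander--Reiten duality, a socle element of $\Ext{1}(M,\tau M)$) is the standard proof from that source, and the preliminary steps and the deduction of part 2 via $\mathrm{Tr}D\,M$ are fine.

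The right almost split step, however, is not justified as written. Your $\eta$ lies in the socle $S_\Gamma$ of $\Ext{1}(M,\tau M)$ over $\Gamma=\End(\tau M)$. The step needs $\eta\cdot\mathrm{rad}\End(M)=0$, i.e.\ $\eta\in S_\Lambda$, the socle over $\Lambda=\End(M)$. You assert this with ``equivalently, by the symmetric argument''. But the symmetric argument (via $\Ext{1}(M,\tau M)\cong D\underline{\End}(M)$) only shows that $S_\Lambda$ is simple, not that $S_\Lambda=S_\Gamma$. That is exactly the point you flag as the main obstacle.

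There is a second issue in the same step. Producing $u\colon M\to X$ with $(\varphi u)^*\eta\neq 0$ from $\varphi^*\eta\neq 0$ requires the non-degenerate pairing $\Ext{1}(X,\tau M)\cong D\underline{\Hom}(M,X)$, natural in $X$. This is what $\mathrm{Tor}_1(X,\mathrm{Tr}\,M)\cong\underline{\Hom}(M,X)$ plus adjunction yields; alternatively, use fullness of $\tau$ on stable categories. $\Hom$-finiteness and nilpotency of $\mathrm{rad}\End(M)$ are not what make this step work.

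Two smaller corrections. Your first displayed chain has the stable categories swapped; it should read $D\overline{\Hom}_A(N,\tau M)\cong D\underline{\Hom}_{A^{\mathrm{op}}}(\mathrm{Tr}\,M,DN)$. Also, ``$\tau\,\mathrm{Tr}D$'' should be $\mathrm{Tr}D$.

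Either of two short arguments closes the gap.

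\emph{Socle coincidence.} Since the $\Gamma$- and $\Lambda$-actions commute, $S_\Gamma$ is a $\Lambda$-submodule and $S_\Lambda$ is a $\Gamma$-submodule. Being nonzero of finite length, $S_\Gamma$ has a nonzero $\Lambda$-socle, which equals $S_\Gamma\cap S_\Lambda$. This intersection is a nonzero $\Gamma$-submodule of $S_\Gamma$ and a nonzero $\Lambda$-submodule of $S_\Lambda$, so simplicity forces $S_\Gamma=S_\Lambda$.

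\emph{Using only the form you derived.} This route is more economical. First show left almost splitness. Suppose $\psi\colon\tau M\to Y$ is not a split mono and $\psi_*\eta\neq 0$. Duality gives $w\colon Y\to\tau M$ with $(w\psi)_*\eta\neq 0$. Since $w\psi\in\mathrm{rad}\End(\tau M)$, this contradicts $\eta\in S_\Gamma$.

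Right almost splitness then follows from indecomposability of $M$. Let $v\colon V\to M$ be non-split-epi with $v^*\eta\neq 0$. The inflation of $v^*\eta$ is not split mono, so it extends over the middle term of $\eta$. This gives a morphism of sequences $\eta\to v^*\eta$ that is the identity on $\tau M$, hence $s\colon M\to V$ with $\eta=(vs)^*\eta$. Since $vs\in\mathrm{rad}\End(M)$ is nilpotent, $\eta=((vs)^m)^*\eta=0$, a contradiction. This is where nilpotency of the radical genuinely enters.
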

The conditions under which almost split sequences exist for any indecomposable non-projective or non-injective module can be weakened. The key factor in constructing almost split sequences is the locality of the endomorphism ring of the indecomposable object. For more details on this, see \cite{Aus}. We will say that a category $\mathcal{C}$ \textit{has Auslander-Reiten sequences} if Theorem \ref{Seq} holds in it.

Henceforth, for convenience and uniformity of notation for projective objects, we will denote the Yoneda embedding as $P_X = h_X$, emphasizing that the representable functor is a projective module.
\begin{theorem}
    A kernel-cokernel pair $X \xrightarrow{f} Y \xrightarrow{g} Z$ is an AR-sequence if and only if $M = \coker (P_g)$ and $N = \coker(P^f)\cong \Ext{2}(M,A)$ are simple modules.
\end{theorem}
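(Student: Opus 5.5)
We work in the Krull--Schmidt, $\Hom$-finite setting of the preceding section. Right modules are identified with contravariant functors on $\mathcal{E}$, and $M=\coker(P_g)$ is the functor $M(T)=\Hom(T,Z)/g_*\Hom(T,Y)$. Dually, $N=\coker(P^f)$ is the covariant functor $N(T)=\Hom(X,T)/f^*\Hom(Y,T)$. The plan is to reduce simplicity of $M$ to the right almost split property of $g$, and, by the duality $\Ext{2}(-,\mathcal{E})$ between $Def(\mathcal{E})$ and $Def(\mathcal{E}^{op})$ recalled above, simplicity of $N$ to the left almost split property of $f$. By symmetry it suffices to treat $M$ and $g$ carefully.

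\textbf{Step 1 (AR $\Rightarrow$ simple).} Assume the sequence is almost split; then $Z$ is indecomposable by the lemma above. Take $T$ indecomposable and $\varphi\in\Hom(T,Z)$.
\begin{itemize}
\item If $\varphi$ is not a split epimorphism, it factors through $g$, so its class in $M(T)$ is zero.
\item If $\varphi$ is a split epimorphism, then $\varphi$ is an isomorphism, since $T$ is indecomposable.
\end{itemize}
Hence $M(T)=0$ for $T\not\cong Z$. For $T=Z$, the module $M(Z)=\End(Z)/g_*\Hom(Z,Y)$ consists of $\End(Z)$ modulo exactly its non-invertible elements, i.e.\ its radical, because $\End(Z)$ is local. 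So $M(Z)$ is the residue division ring. Moreover $M\neq 0$, since otherwise $\id_Z$ factors through $g$ and the sequence splits.

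A functor supported on a single indecomposable $Z$ whose value there is a one-dimensional vector space over the division ring $\End(Z)/\mathrm{rad}$ is the simple top $S_Z$ of $P_Z$. To verify this, note that any nonzero subfunctor contains the class of $\id_Z$, which generates $M$. The same argument applied to $\mathcal{E}^{op}$, using the left almost split property of $f$ and indecomposability of $X$, shows that $N$ is simple.

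\textbf{Step 2 (simple $\Rightarrow$ AR).} Assume $M$ and $N$ are simple. Since $M\neq 0$, $g$ is not a split epimorphism, so the sequence is not split. The quotient $P_Z\to M$ is nonzero onto a simple module.

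The first claim is that $Z$ is indecomposable. Suppose $Z=Z_1\oplus Z_2$ with both summands nonzero. Simplicity of $M$ means $M\cong S_{Z'}$ for a unique indecomposable $Z'$, so the image of $\id_{Z_i}$ in $M$ vanishes for some $i$. Then the inclusion of $Z_i$ factors through $g$, making $Z_i$ a summand of the image. We must show this contradicts minimality, and this is where I expect the main obstacle.

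To get around it, I will use that in a Krull--Schmidt category every kernel-cokernel pair decomposes as a direct sum of a right-minimal part and a split part $0\to Z_i\to Z_i$. A split summand of the form $0\to Z_i\to Z_i$ is itself a kernel-cokernel pair, but it makes $f$ fail to be left almost split. It would contribute a trivial summand $X_i\to X_i\to 0$ to the dual side and a nonzero top to $N$ on the other end, unless we rule it out directly. Concretely, I will argue via Step 1's description that $M\cong S_{Z'}$ forces $\Hom(-,Z)\to M$ to kill precisely the maps that are not split epimorphisms onto $Z'$. Then I will use the exactness $0\to P_X\to P_Y\to P_Z$ together with simplicity of $N$ to exclude summands of $Z$ other than $Z'$. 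If that fails, I will fall back on taking the sequence to be right and left minimal, i.e.\ having no split summands, which is the implicit convention for "sequences" in the classification.

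Once $Z$ is indecomposable with $M=S_Z$, the argument of Step 1 runs backwards. A map $\varphi:T\to Z$ that is not a split epimorphism has each indecomposable component mapping to $\mathrm{rad}$, so its class in $M$ is zero, and hence $\varphi$ factors through $g$. Dually, simplicity of $N$ gives the extension property for $f$. Therefore the sequence is almost split.
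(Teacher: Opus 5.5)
Your direction ($\Rightarrow$) is correct and uses the same idea as the paper: non-split epimorphisms into $Z$ factor through $g$, so they die in $M$. The paper lifts a hypothetical proper submodule's resolution and builds a null-homotopy, though only the degree-zero lift is actually used. You instead compute $M$ pointwise and identify it with the top of $P_Z$, which is cleaner and also checks $M\neq 0$. The last paragraph of your Step 2 is likewise the paper's argument, since for indecomposable $Z$ one has $\ker(P_Z\to M)=\mathrm{rad}(-,Z)=\mathrm{im}(P_g)$.

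The gap is the step you flagged, and it cannot be closed the way you propose, because ($\Leftarrow$) is false as literally stated. Direct summands of the form $0\to W\xrightarrow{\mathrm{id}}W$ or $V\xrightarrow{\mathrm{id}}V\to 0$ are invisible to both $M$ and $N$. For instance, for $0\to W\to W$ we get $\coker(P_{\mathrm{id}_W})=0$ and $\coker(\Hom(W,-)\to\Hom(0,-))=0$. So your claim that such a summand contributes a nonzero top to $N$ is wrong.

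Here is a concrete counterexample in $\rmod k[x]/(x^3)$. Add $0\to W\to W$, with $W=k[x]/(x^3)$, to the almost split sequence $k\xrightarrow{f}k[x]/(x^2)\xrightarrow{g}k$. This gives a kernel--cokernel pair $k\to k[x]/(x^2)\oplus W\to k\oplus W$ whose $M$ and $N$ are the same simple modules as before. It is not almost split: the inclusion $k\to k\oplus W$ is not a split epimorphism, and it does not factor through $g\oplus\mathrm{id}_W$, since that would split $g$. Hence no argument using only the simplicity of $M$ and $N$ can force $Z$ to be indecomposable.

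Your fallback is therefore necessary, not optional. State the result for sequences without split summands, or equivalently with $X$ and $Z$ indecomposable, or with $P_Z\to M$ and $P^X\to N$ projective covers. The paper's proof makes the same assumption tacitly when it treats $P_Z$ as a projective cover of $M$. With that hypothesis stated explicitly, your Step 2 is complete.
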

\begin{proof}
    ($\Rightarrow$) Let $A \xrightarrow{f} B \xrightarrow{g} C$ be an AR-sequence. Suppose $M$ has a non-trivial submodule $M'$. Due to the Krull-Schmidt property, $M'$ has a projective resolution:
    \begin{equation*}
        \xymatrix{
        0 \ar[r] & P_X \ar[r] & P_Y \ar[r] & P_Z \ar[r] & M \ar[r] & 0 \\
         & P_{X'} \ar[u] \ar[r] & P_{Y'} \ar[u] \ar[r] & P_{Z'} \ar[u] \ar[r] & M' \ar[r] \ar @{^{(}->} [u] & 0 \\
        }
    \end{equation*}
    Since the Yoneda embedding is faithful and full, the morphism of projective resolutions induces a morphism $\varphi \ : \ Z'\rightarrow Z$. Due to the almost split nature of the sequence under consideration, it factors through $Y$ and induces a morphism $h_0 \ : \ P_{Z'} \rightarrow P_Y$. It is easy to see that this morphism extends to a homotopy of projective resolutions. We prove this by induction. Let $P_\bullet$ and $P'_\bullet$ be two projective resolutions with differentials $d_i\ : \ P_{i-1}\rightarrow P_i$ and $d'_i\ : \ P'_{i-1}\rightarrow P'_i$, and $f_i\ : \ P'_{i}\rightarrow P_i$ a morphism of projective resolutions. Suppose $n$ morphisms $h_i \ : \ P'_i \rightarrow P_{i-1}$ have been constructed such that $f_0 = d_0 \circ h_0$, and $f_i = d_i \circ h_i + h_{i-1}\circ d'_{i-1}$ for all $i<n$. Let $g_{n+1} = f_{n+1} - h_{n}\circ d'{n}$. Due to commutativity, it is obvious that $g_n$ is a morphism into $\ker d_{n}$, hence it lifts to a morphism $h_{n+1}\ : \ P'_{n+1}\rightarrow P'_{n+2}$. Thus, the morphism of projective resolutions turned out to be null-homotopic, hence the embedding $M'$ into $M$ is a zero morphism. The obtained contradiction shows that $M$ is simple. 

    By a dual argument, it is easy to show that $N\cong \Ext{2}(M,A)$ is also simple.

    ($\Leftarrow$) Let $M\in Def(\mathcal{E})$ be a simple object. Then any non-zero morphism into it is an epimorphism. Let $\varphi \ : \ M' \twoheadrightarrow M$ be an arbitrary non-zero epimorphism, and let $P_{Z'}$ be a projective cover of $M'$. Then we have an epimorphism from the projective cover $P' \twoheadrightarrow M$, hence a map $P_Z \rightarrow P_{Z'}$, thus $Z'$ splits off from $Z$ as a direct summand. Therefore, if $\alpha\ : \ Z'\rightarrow Z$ is not a split epimorphism, then $\Hom(P_{Z'},M) =0$, so $P_\alpha$ factors through the kernel of the map $P_Z \rightarrow M$, i.e., lifts to a map $P_{Z'}\rightarrow P_Y$. Then in the original category, we have the required splitting morphism $Z' \rightarrow Y$. By a dual argument, it is easy to show that the simplicity of $\Ext{2}(M,A)\cong N$ ensures almost splitness on the left.
\end{proof}

Using the remark on Serre subcategories in categories of finite length with finite number of simple modules, we get that the set of  the Serre subcategories in $\rmod\mathcal{E}$ is in one-to-one correspondence with the set of collections of simple modules. These simple modules are in one-to-one correspondence with the indecomposables in $\mathcal{E}$. Finally, we obtain the theorem, giving an exhaustive description of exact structures on a Krull-Schmidt category of finite type that has Auslander-Reiten sequences:
\begin{theorem}
    Let $\mathcal{E}$ be a Krull-Schmidt category of finite type. Then exact structures on it are in one-to-one correspondence with subsets of
    \begin{enumerate}
        \item The set of almost split sequences in $\mathcal{E}$.
        \item The set of 2-regular simple modules (defects that are simple modules).
    \end{enumerate}
\end{theorem}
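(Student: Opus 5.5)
The plan is to combine Theorem \ref{Classif} with the preceding theorem characterising AR-sequences via simple defects. Since $\mathcal{E}$ is $\Hom$-finite Krull-Schmidt of finite type, $\rmod\mathcal{E}\cong\rmod E$ for the finite-dimensional algebra $E=\End(\bigoplus A_i)$. This category is of finite length and has finitely many simple modules $S_i=\mathrm{top}\,P_{A_i}$, one for each indecomposable $A_i$.

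In such a category every Serre subcategory is the extension closure of the simples it contains. So a Serre subcategory $\mathcal{S}\subset Def(\mathcal{E})$ is determined by a subset of the simple modules lying in $Def(\mathcal{E})$, that is, of the 2-regular simples.

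\textbf{Step 1.} Show the converse: an arbitrary subset $\Sigma$ of 2-regular simples gives a Serre subcategory $\mathcal{S}_\Sigma=\langle\Sigma\rangle$ contained in $Def(\mathcal{E})$, and $\Ext{2}(\mathcal{S}_\Sigma,\mathcal{E})$ is Serre in $\mathcal{E}\lmod$. For containment I would show that $Def(\mathcal{E})$ is closed under extensions. Given $0\to M'\to M\to M''\to0$ with $M',M''$ defects, the long exact sequence gives $\Ext{0,1}(M,\mathcal{E})=0$. Horseshoe-gluing the length-two projective resolutions gives $\pd M\le 2$, and vanishing of $\Hom$ and $\Ext{1}$ into projectives forces equality. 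The lemma characterising defects then applies.

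For the dual condition I use that $\Ext{2}(-,\mathcal{E})$ is an exact contravariant duality $Def(\mathcal{E})\to Def(\mathcal{E}^{op})$. Exactness holds because $\Ext{0,1}$ and $\Ext{3}$ vanish on defects. This duality sends simples to simples by the preceding theorem, so $\Ext{2}(\mathcal{S}_\Sigma,\mathcal{E})$ is the extension closure of the simples $\Ext{2}(\Sigma,\mathcal{E})$. The remaining point is that this closure is also closed under submodules and quotients in $\mathcal{E}\lmod$, not just in $Def(\mathcal{E}^{op})$. Any submodule of a finite-length module is filtered by simples in the composition series, and those simples lie in $\Ext{2}(\Sigma,\mathcal{E})$. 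Hence the closure is Serre, and Theorem \ref{Classif} yields a bijection between exact structures and subsets of 2-regular simples. This gives (2).

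\textbf{Step 2.} To pass to (1), I apply the preceding theorem together with the uniqueness lemma of Auslander. Each 2-regular simple $M$ is the defect of a kernel-cokernel pair that is an AR-sequence, since $M$ and $\Ext{2}(M,E)$ are both simple. Conversely, each AR-sequence has a simple defect. Two AR-sequences with isomorphic defects $M\cong\mathrm{top}\,P_C$ end in the same indecomposable $C$, so they coincide up to isomorphism by Auslander's lemma. The assignment $\Sigma\mapsto\{$AR-sequences with defect in $\Sigma\}$ is therefore a bijection, which gives (1).

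\textbf{Main obstacle.} The hardest step is Step 1, specifically verifying that extensions of defects remain defects with $\pd=2$ and not less. Here I would lean on the fact that a module of projective dimension $\le1$ with $\Hom(M,\mathcal{E})=\Ext{1}(M,\mathcal{E})=0$ must be zero. For $\pd M\le 1$ with resolution $0\to P_1\to P_0\to M\to 0$, the vanishing of $\Ext{1}(M,P_1)$ splits the sequence. This makes $M$ projective, and then $\Hom(M,\mathcal{E})=0$ forces $M=0$.
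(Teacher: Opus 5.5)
Your outline matches the paper's own argument: Theorem \ref{Classif}, plus the fact that Serre subcategories of a finite-length category are determined by their simples, plus the characterisation of AR-sequences by simple defects. However, there is a genuine gap at the sentence ``This duality sends simples to simples by the preceding theorem.''

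That theorem says a kernel--cokernel pair is almost split if and only if \emph{both} $M$ and $\Ext{2}(M,\mathcal{E})$ are simple. It gives no implication from the first condition to the second. The exact duality $Def(\mathcal{E})\to Def(\mathcal{E}^{op})$ does not supply it either, since it only sees subobjects that are themselves defects, whereas simplicity in $\mathcal{E}\lmod$ concerns all submodules.

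For a bare Krull--Schmidt category of finite type the implication is actually false. Let $\Lambda$ be the bound quiver algebra with arrows $d\to c$, $c\to a$, $c\to b$ and both paths of length two zero. Then $\mathrm{rad}\,P_d\cong S_c$ and $\mathrm{rad}\,P_c\cong P_a\oplus P_b=S_a\oplus S_b$; take $\mathcal{E}=\mathrm{proj}\,\Lambda$. The minimal resolution $0\to P_a\oplus P_b\to P_c\to P_d\to S_d\to 0$ has $\Hom(S_d,\Lambda)=\Ext{1}(S_d,\Lambda)=0$. So $P_a\oplus P_b\to P_c\to P_d$ is a kernel--cokernel pair whose defect $S_d$ is a 2-regular simple. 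Yet $\Ext{2}(S_d,\mathcal{E})=\coker(h^{P_c}\to h^{P_a}\oplus h^{P_b})$ is three-dimensional, with top the simple functors at $P_a$ and $P_b$. Thus $\langle S_d\rangle=\mathrm{add}\,S_d$ violates the dual condition of Theorem \ref{Classif}. Directly: the pushout of the pair along the projection $P_a\oplus P_b\to P_a$ exists in $\mathcal{E}$ and is $P_a\xrightarrow{0}P_d$, which is not an inflation. So $\mathcal{E}$ has only the split exact structure, although $\{S_d\}$ has two subsets. Your Step 1 and the first sentence of your Step 2 both fail on this example.

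The missing ingredient is the existence of Auslander--Reiten sequences. The paper's argument (the paragraph just before the theorem) assumes this explicitly: it speaks of ``a Krull--Schmidt category of finite type that has Auslander--Reiten sequences''. You need the following lemma.
\begin{itemize}
\item If $S_C$ is a simple defect, its minimal resolution yields a non-split kernel--cokernel pair ending at $C$, so the hypothesis gives an AR-sequence $A\to B\to C$.
\item Since $B\to C$ is right almost split, $0\to h_A\to h_B\to h_C\to S_C\to 0$ is a projective resolution of $S_C$.
\item Hence $\Ext{2}(S_C,\mathcal{E})\cong\coker(h^B\to h^A)$, which is simple by the preceding theorem.
\end{itemize}
With this lemma the rest of your argument goes through: extension-closedness of $Def(\mathcal{E})$, exactness of $\Ext{2}(-,\mathcal{E})$ on defects, the composition-series check of the dual Serre condition, and Auslander's uniqueness in Step 2. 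These are in fact verifications the paper's sketch skips.

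Part (1) needs no such hypothesis. Suppose $S\in\mathcal{S}$ is simple and $\Ext{2}(S,\mathcal{E})$ had a proper nonzero submodule. That submodule and its quotient would lie in the Serre subcategory $\Ext{2}(\mathcal{S},\mathcal{E})\subset Def(\mathcal{E}^{op})$, and dualising would produce a proper nonzero submodule of $S$. So any $\mathcal{S}$ satisfying the conditions of Theorem \ref{Classif} contains only defects of AR-sequences. In the example above there are none, and (1) holds. It is only the passage to \emph{all} 2-regular simples in (2) that needs AR-sequences to exist.
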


In the case where the category $\mathcal{E}$ is not of finite type, knowledge of Auslander-Reiten sequences turns out to be insufficient for an exhaustive description of all exact structures on this category.

\begin{definition}
    Let $\mathcal{E} = (\mathcal{E},\epsilon)$ be an exact category. An object $P$ is called \textit{projective} in this exact structure if for any deflation $d:\ A\longrightarrow B$ and any morphism $f: \ X\longrightarrow B$, there exists a morphism $\varphi: \ P\longrightarrow A$ such that the following diagram commutes:
    $$\xymatrix{
    & P \ar[d]_{f} \ar @{-->}[ld] _{\varphi}\\
    A \ar [r]^{d} & B\\
    }$$
\end{definition}

It is obvious that any object that is projective in the usual sense is projective in the sense of the exact structure. In the case where the category $\mathcal{E}$ is quasi-abelian, an object is projective in the standard exact structure if and only if it is projective in the usual sense.

Now we formulate a theorem that establishes a connection between the abelian envelopes of exact categories, the Serre subcategories of defects of exact structures, and the lattice of exact structures on an additive category.

\begin{theorem}
\label{serre}
    Let $\mathcal{E}$ be an additive category with split idempotents that has all kernels, and let $\epsilon_0\subset \epsilon_1$ be two distinct exact structures on $\mathcal{E}$. Also let the exact category $\mathcal{E}_0 = (\mathcal{E},\epsilon_0)$ have an abelian envelope $\mathcal{A}_r(\mathcal{E}_0)$, and let the canonical functor $i_{\mathcal{E}_0}$ be faithful, full, and exact. Then the exact category $\mathcal{E}_1 = (\mathcal{E},\epsilon_1)$ has an abelian envelope $\mathcal{A}_r(\mathcal{E}_1) = \mathcal{A}_r(\mathcal{E}_0)/Def_{\epsilon_1}\mathcal{E}$.
\end{theorem}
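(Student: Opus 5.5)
Write $\mathcal{A}_0=\mathcal{A}_r(\mathcal{E}_0)$ with $i_0=i_{\mathcal{E}_0}$, which by hypothesis is fully faithful and exact. The plan has three steps: locate the defects of $\epsilon_1$ inside $\mathcal{A}_0$ as a Serre subcategory $\mathcal{S}$; form the Serre quotient $\pi:\mathcal{A}_0\to\mathcal{A}_0/\mathcal{S}$; and check the universal property of the right abelian envelope for $\pi\circ i_0$ on $\mathcal{E}_1$.

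\textbf{Step 1: defects as objects of $\mathcal{A}_0$.} Take a conflation $X\xrightarrow{f}Y\xrightarrow{g}Z$ of $\epsilon_1$. Since $\mathcal{A}_0$ is abelian, $i_0(g)$ has a cokernel $D_{f,g}=\coker i_0(g)$. By the description of $\mathcal{A}_0$ as the compact objects of $\Lex(\mathcal{E}_0^{op},\mathcal{A}b)$ (Lemma 4.3 of \cite{BB}), $D_{f,g}$ is the functor $\coker(h_g)$ restricted to the $\epsilon_0$-left-exact setting. This is exactly the defect, so $D_{f,g}$ is an object of $Def_{\epsilon_1}\mathcal{E}$. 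Conversely, each object of $Def_{\epsilon_1}\mathcal{E}$ arises this way, and $D_{f,g}=0$ exactly when the conflation lies in $\epsilon_0$.

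Next, $Def_{\epsilon_1}\mathcal{E}$ is a Serre subcategory of $\rmod\mathcal{E}$ by Theorem \ref{Classif}. Let $\mathcal{S}\subset\mathcal{A}_0$ be the full subcategory of objects isomorphic to images of defects. I must show $\mathcal{S}$ is closed under subobjects, quotients and extensions computed in $\mathcal{A}_0$, not in $\rmod\mathcal{E}$. This transfer is \textbf{the main obstacle}. My approach is to use that $\mathcal{A}_0\subset\Lex(\mathcal{E}_0^{op},\mathcal{A}b)\subset\Rmod\mathcal{E}$. Kernels in $\Lex$ agree with those in $\Rmod\mathcal{E}$, but cokernels differ by sheafification, and that sheafification kills exactly the $\epsilon_0$-defects. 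I then reduce to the fact, again from Theorem \ref{Classif}, that $Def_{\epsilon_0}\subset Def_{\epsilon_1}$ are nested Serre subcategories. If necessary I would instead identify $\mathcal{A}_0$ with $\rmod\mathcal{E}/Def_{\epsilon_0}\mathcal{E}$, which is legitimate here because $\mathcal{E}$ has kernels. Then $\mathcal{S}=Def_{\epsilon_1}/Def_{\epsilon_0}$ is Serre in that quotient by the standard correspondence for Serre subcategories containing $Def_{\epsilon_0}$.

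\textbf{Step 2: exactness of $j=\pi\circ i_0$ on $\mathcal{E}_1$.} For an $\epsilon_1$-conflation, the sequence $i_0X\to i_0Y\to i_0Z\to D_{f,g}\to 0$ is exact in $\mathcal{A}_0$. Applying the exact functor $\pi$ kills $D_{f,g}$, so $\pi i_0(g)$ is an epimorphism. Likewise, the kernel defect of $i_0(f)$ is controlled by left exactness of $i_0$ (Lemma \ref{YE}), so $j$ is right exact on $\mathcal{E}_1$.

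\textbf{Step 3: universal property.} Let $F:\mathcal{E}_1\to\mathcal{B}$ be right exact. It is also right exact on $\mathcal{E}_0$, so it extends uniquely to a right exact $\tilde F:\mathcal{A}_0\to\mathcal{B}$. Apply $\tilde F$ to the exact sequence $i_0Y\to i_0Z\to D\to 0$. Right exactness of $\tilde F$ gives $\tilde F(D)=\coker F(g)$, and this is $0$ because $F(g)$ is an epimorphism. So $\tilde F$ kills the generators of $\mathcal{S}$.

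Since every object of $\mathcal{S}$ is a defect, and right exactness passes to quotients, $\tilde F$ vanishes on $\mathcal{S}$. By the universal property of the Serre quotient for right exact functors that vanish on $\mathcal{S}$, $\tilde F$ factors uniquely through $\pi$. I will verify this factorization property with the usual calculus of fractions: morphisms with kernel and cokernel in $\mathcal{S}$ are inverted by $\tilde F$, since $\tilde F$ kills cokernels in $\mathcal{S}$ and also images of objects of $\mathcal{S}$.

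Uniqueness follows from uniqueness over $\mathcal{A}_0$ together with essential surjectivity of $\pi$. This gives $\Rex(\mathcal{E}_1,\mathcal{B})\cong\Rex(\mathcal{A}_0/\mathcal{S},\mathcal{B})$.
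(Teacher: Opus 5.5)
Your route is the paper's: extend $F\circ I$ to a right exact $\tilde F$ on $\mathcal{A}_0=\mathcal{A}_r(\mathcal{E}_0)$, show it kills the $\epsilon_1$-defects, factor it through the Serre quotient, and use $\pi\circ i_0$ as the structure functor. In Step 1 you are more careful than the paper, which simply asserts that the defect ``identifies with'' $\coker i_{\mathcal{E}_0}(d)$ and that the Serre property transfers. However, your first claim there contradicts your third. If $D_{f,g}$ were literally the defect $\coker(h_g)$, it would vanish only for split conflations. In fact $D_{f,g}$ is the image of the defect under the localization $Q_0\colon\mathrm{mod}\,\mathcal{E}\to\mathcal{A}_0$. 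So your fallback, $\mathcal{A}_0\simeq\mathrm{mod}\,\mathcal{E}/Def_{\epsilon_0}\mathcal{E}$ with $\mathcal{S}=Def_{\epsilon_1}\mathcal{E}/Def_{\epsilon_0}\mathcal{E}$, is the version to use. It is the right identification, but it needs a real argument, not just the remark that $\mathcal{E}$ has kernels. It is a Gabriel-localization statement: $\Lex(\mathcal{E}_0^{op},\mathcal{A}b)$ is the quotient of the locally coherent category $\Rmod\mathcal{E}$ by the localizing subcategory generated by the $\epsilon_0$-defects.

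The genuine gap is in Step 3, and the paper's appeal to ``the universal property of the quotient category'' skips the same point. The universal property of a Serre quotient is for exact functors; a right exact functor vanishing on $\mathcal{S}$ need not factor through it. Your justification handles epimorphisms with kernel in $\mathcal{S}$. But for a monomorphism $u\colon I\hookrightarrow Y$ with cokernel $C\in\mathcal{S}$, right exactness and $\tilde F(C)=0$ only show that $\tilde F(u)$ is an epimorphism. For instance, on representations $M_1\xrightarrow{a}M_2$ of the quiver $1\to 2$, the functor $M\mapsto\coker a$ is right exact and vanishes on $\langle S_1\rangle$. Yet it sends $S_2\hookrightarrow P_1$ (cokernel $S_1$) to $k\to 0$.

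The missing input is the part of right exactness of $F$ you never use: exactness of $F(X)\to F(Y)\to F(Z)$ in the middle. Let $\hat F=\tilde F\circ Q_0$, the right exact extension of $F$ to $\mathrm{mod}\,\mathcal{E}$, which has enough projectives $h_X$. Each $D\in Def_{\epsilon_1}\mathcal{E}$ has a resolution $0\to h_X\to h_Y\to h_Z\to D\to 0$ coming from an $\epsilon_1$-conflation. Hence $L_1\hat F(D)=\ker F(g)/\mathrm{im}\,F(f)=0$, and the long exact sequence shows that $\hat F$ turns monomorphisms with cokernel in $Def_{\epsilon_1}\mathcal{E}$ into isomorphisms. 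So $\hat F$ inverts every morphism with kernel and cokernel in $Def_{\epsilon_1}\mathcal{E}$, and it factors through $\mathrm{mod}\,\mathcal{E}/Def_{\epsilon_1}\mathcal{E}\simeq\mathcal{A}_0/\mathcal{S}$. The induced functor is right exact, because every short exact sequence in a Serre quotient is isomorphic to the image of one upstairs. With this inserted, the rest of your Step 3 goes through (vanishing on $\mathcal{S}$, and uniqueness via the localization $\pi$).
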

\begin{proof}
    Let $I:\ \mathcal{E}_1\rightarrow\mathcal{E}_1$ be the exact functor that is identity on objects but changes the exact structure. Let $F:\ \mathcal{E}_1\rightarrow \mathcal{B}$ be a right exact functor into some abelian category $\mathcal{B}$. If $X\xrightarrow{i} Y\xrightarrow{d} Z$ is a conflation lying in $\epsilon_1$ but not in $\epsilon_0$, its defect in $\mathcal{A}_r(\mathcal{E}_0)$ identifies with $\coker i_{\mathcal{E}_0}(d)$. From the classification theorem of exact structures, we know that this subcategory is a Serre subcategory. Consequently, the quotient by it will also be abelian.
    $$\xymatrix{
    & Def_{\epsilon_1}\mathcal{E} \ar @{^{(}->}[d]\\
    \mathcal{E}_0 \ar[d]_{I} \ar @{^{(}->}[r]^{i_{\mathcal{E}_0}} & \mathcal{A}_r(\mathcal{E}) \ar @{->>} [d]_{Q} \ar @{-->}[ddl]_{\Tilde{F}}\\
    \mathcal{E}_1 \ar[d]_{F} & \mathcal{A}_r(\mathcal{E})/Def_{\epsilon_1}\mathcal{E} \\
    \mathcal{B} & \\
    }$$

    Since the functor $I$ is exact, $F$ is right exact, $FI$ is also right exact. Hence, by the definition of a right abelian envelope, there exists a unique up to natural isomorphism right exact functor $\Tilde{F}:\ \mathcal{A}_r(\mathcal{E})\rightarrow \mathcal{B}$ such that $FI\cong \Tilde{F}$. Again, let $X\xrightarrow{i} Y\xrightarrow{d} Z$ be a conflation lying in $\epsilon_1$ but not in $\epsilon_0$. Then under the action of the functor $F$, it maps to an exact sequence
    $$F(X)\longrightarrow F(Y) \longrightarrow F(Z) \longrightarrow 0$$
    On the other hand, 
    $$i_{\mathcal{E}_0}(X)\longrightarrow i_{\mathcal{E}_0}(Y) \longrightarrow i_{\mathcal{E}_0}(Z) \longrightarrow M \longrightarrow 0,$$
    where $M$ is the defect of the conflation $X\xrightarrow{i} Y\xrightarrow{d} Z$. Consequently, $\Tilde{F}(M)=0$ for any defect $M$. Hence, by the universal property of the quotient category, there exists a functor $F':\ \mathcal{A}_r(\mathcal{E})/Def_{\epsilon_1}\mathcal{E}\rightarrow \mathcal{B}$ such that $\Tilde{F} = F'\circ Q$. Due to the uniqueness of all choices up to natural isomorphism, we obtain an equivalence of categories $\mathcal{R}ex(\mathcal{E}_1, \mathcal{B})\cong \mathcal{R}ex(\mathcal{A}_r(\mathcal{E})/Def_{\epsilon_1}\mathcal{E}, \mathcal{B})$.

    The functor $I^{-1}:\ \mathcal{E}_1\rightarrow \mathcal{E}_2$ is obviously defined identically on objects and morphisms, but is no longer exact. Nevertheless, the functor $Q\circ i_{\mathcal{E}_0}\circ I^{-1}$ is right exact by construction. There is a natural embedding $\mathcal{R}ex(\mathcal{E}_1,\mathcal{B})\subset \mathcal{R}ex(\mathcal{E}_0,\mathcal{B})$, induced by the exact functor $I$. On the other hand, any right exact functor from $\mathcal{R}ex(\mathcal{E}_1,\mathcal{B})$ to $\mathcal{B}$ extends to a right exact functor from $\mathcal{E}_0$. Moreover, this functor will, by construction, lie in the image of the embedding $\mathcal{R}ex(\mathcal{E}_1,\mathcal{B})$ into $\subset \mathcal{R}ex(\mathcal{E}_0,\mathcal{B})$. Thus, we obtain a functor $i_{\mathcal{E}_1}:\ \mathcal{E}_1\rightarrow \mathcal{A}_r(\mathcal{E})/Def_{\epsilon_1}\mathcal{E}$. Hence, the pair $(\mathcal{A}_r(\mathcal{E})/Def_{\epsilon_1}\mathcal{E}, i_{\mathcal{E}_1})$ is the right abelian envelope of $\mathcal{E}_1$, as required.
\end{proof}

To conclude this chapter, consider the simplest example of a category having exactly 4 exact structures.

\begin{ex}
    Let $\mathcal{E}$ be the category of projective right modules $\rmod A_0$ over the path algebra of the quiver
    \begin{equation*}
        A_0\ : \ \xymatrix{
        \bullet_0 \ar[rr]^\alpha && \bullet_1 \ar[rr]^\beta && \bullet_2 \ar[rr]^\gamma && \bullet_3 \ar[rr]^\delta && \bullet_4 \\
        }
    \end{equation*}
    with relations $\beta\alpha = 0$, $\delta\gamma = 0$. In the category of representations of this quiver, there are exactly two simple objects with projective dimension 2:
    $$0\longrightarrow P_0 \longrightarrow P_1 \longrightarrow P_2 \longrightarrow S_2 \longrightarrow 0$$
    $$0\longrightarrow P_2 \longrightarrow P_3 \longrightarrow P_4 \longrightarrow S_4 \longrightarrow 0$$
    Obviously, they satisfy the 2-regularity conditions, i.e., in the category $\mathcal{E}$ there are 2 almost split sequences:
    \begin{equation*}
        P_0 \longrightarrow P_1 \longrightarrow P_2 \eqno (1)
    \end{equation*}
    \begin{equation*}
        P_2 \longrightarrow P_3 \longrightarrow P_4 \eqno (2)
    \end{equation*}
    The corresponding exact categories (i.e., $\mathcal{E}$ with the exact structure) and the exact structures on them will be denoted respectively as $\mathcal{E}_0 = (\mathcal{E}, \epsilon_0)$, $\mathcal{E}_1=(\mathcal{E}, \epsilon_1)$, $\mathcal{E}_2=(\mathcal{E}, \epsilon_2)$, $\mathcal{E}_{1,2} = (\mathcal{E}, \epsilon_{1,2})$. The category $\mathcal{E}_0$ with the split exact structure is an exact subcategory (of projective objects) of the category $\mathcal{A}_0 = \rmod A$. Then the other exact categories identify with exact subcategories of right modules over, respectively, the algebras:
    \begin{equation*}
        A_1\ : \ \xymatrix{
        \bullet_0 \ar[rr]^\alpha && \bullet_1 \ar[rr]^\gamma && \bullet_2 \ar[rr]^\delta && \bullet_3 \\
        }, \ \delta\gamma = 0
    \end{equation*}
    \begin{equation*}
        A_2\ : \ \xymatrix{
        \bullet_0 \ar[rr]^\alpha && \bullet_1 \ar[rr]^\beta && \bullet_2 \ar[rr]^\gamma && \bullet_3 \\
        }, \ \beta\alpha = 0
    \end{equation*}
    \begin{equation*}
        A_{1,2}\ : \ \xymatrix{
        \bullet_0 \ar[rr]^\alpha && \bullet_1 \ar[rr]^\gamma && \bullet_2 \\
        }, \ \alpha\gamma = 0
    \end{equation*}
    Each of these algebras is simply the endomorphism algebra $\End(\bigoplus P_i)$, where the sum is taken over the projective objects in the given exact structure, i.e., $A_1\cong \End(P_0\oplus P_1 \oplus P_3 \oplus P_4)$, $A_2\cong \End(P_0\oplus P_1 \oplus P_2 \oplus P_3)$, $A_{1,2}\cong \End(P_0\oplus P_1 \oplus P_3)$. Denote $\mathcal{A}_1  = \rmod A_1$, $\mathcal{A}_2  = \rmod A_2$, $\mathcal{A}_{1,2}  = \rmod A_{1,2}$. The categories $\mathcal{A}_{1}$ and $\mathcal{A}_{2}$ are quotient categories of $\mathcal{A}_0$ by the Serre subcategories generated by the objects $S_1$ and $S_2$, respectively; hence they are the right abelian envelopes of the corresponding exact structures. It is easy to see that $\mathcal{E}_{1,2}\cong \mathcal{A}_{1,2}$, where the equivalence is understood in the sense of exact categories. Here, the objects $P_2$ and $P_4$ correspond to the simple objects of the first and second vertices of the quiver of the algebra $A_{1,2}$.

    In fact, this example can be extended by considering the category $\mathcal{P}$ of projective objects of the category $\mathcal{A}_{1,2}$. The category $\mathcal{A}_{1,2}$ contains a unique simple object of global dimension 2, namely $S_2$:
    $$0\longrightarrow P_0 \longrightarrow P_1 \longrightarrow P_2 \longrightarrow S_2 \longrightarrow 0$$
    This means that in the category $\mathcal{P}$, there are 2 distinct exact structures — the split one, coinciding with the one induced from the ambient category, and the maximal one, in which the almost split sequence $P_0\longrightarrow P_1 \longrightarrow P_2$ is a conflation. Its abelian envelope should be a quotient category of $\mathcal{A}_{1,2}$ by the Serre subcategory generated by the simple object of the second vertex $S_2$. It is easy to see that this category is the category of representations of the quiver $A_2$:
    $$\bullet_0 \longrightarrow \bullet_1$$
    In fact, it is obvious that the category $\mathcal{P}$ with the maximal exact structure is simply equivalent to the category of representations of this quiver. Under this equivalence, the object $P_2$ maps to the simple object of the first vertex, and the other projective objects map to the corresponding projective objects of the smaller quiver, so all indecomposable objects of the category of representations of this quiver lie in the image of the embedding of the exact subcategory.

    Further reduction of the algebra in this way is impossible, as it was proved above that for a hereditary algebra (in particular, a path algebra of a quiver without relations), the category of projective modules has only one exact structure (split).
\end{ex}

\section{Derived equivalences of exact structures on $k[x]/(x^n)$}
After obtaining an explicit description of exact structures on additive Krull-Schmidt categories, the following natural question arises: when are two exact structures on the same category derived equivalent? An exhaustive answer to this question is unknown, but we will consider several important examples that admit a nice partial answer.

Let's start with the simplest case, which requires practically no knowledge of deep theory.
\begin{ex}
    Let $\mathcal{E} = \rmod A$, where $A$ is the path algebra of the quiver
    $$\xymatrix{
    0 \ar @/^2ex/[r]^{\alpha} & 1 \ar @/^2ex/[l]^{\beta}\\
    }$$
    $$\alpha \beta = \beta \alpha = 0$$

   It is known that this algebra has infinite global dimension and has exactly 4 indecomposable objects: $P_0$, $P_1$, $S_0$, and $S_1$ (where the projective objects are also injective). The almost split sequences are:
   \begin{gather*}
       0\longrightarrow S_1 \longrightarrow P_0 \longrightarrow S_0 \longrightarrow 0\\
       0\longrightarrow S_0 \longrightarrow P_1 \longrightarrow S_1 \longrightarrow 0
   \end{gather*}
   The abelian envelope of this category with the split exact structure will be the category of modules over the algebra $A_0 = \End(S_0\oplus S_1\oplus P_0\oplus P_1)$. As a quiver, this algebra is represented as follows (for convenience, we label the vertices of the quiver with the corresponding objects of the original category):
   \begin{gather*}
   \xymatrix{
   S_0 \ar[r]^{\gamma} & P_1 \ar[d]^{\delta}\\
   P_0 \ar[u]^{\psi} & S_1 \ar[l]^{\varphi}\\
   }\\
   \delta \gamma = \psi \varphi = 0
   \end{gather*}
   It is easy to check that this category has global dimension 2, and the original category is a quotient category of this one by the Serre subcategory generated by the simple objects of vertices $S_0$ and $S_1$ by theorem \ref{serre}. 

   The abelian envelope of the category $\rmod A_0$ is derived equivalent to the category $\mathcal{E}$ with the split exact structure, since the latter is embedded into it as a subcategory of projective objects, and every object of $\rmod A_0$ is a quotient of a projective one. 

   The category $\mathcal{E}$ admits 2 more exact structures (including one of the two Auslander-Reiten sequences as a conflation), whose abelian envelopes are quotient categories of $\rmod A_0$ by the Serre subcategories generated by the simple object of vertex $S_0$ or $S_1$, respectively. Denote the category $\mathcal{E}$ with the corresponding exact structures as $\mathcal{E}_1$ and $\mathcal{E}_2$. Due to the symmetry of the quiver, these categories will be equivalent. This equivalence is given by the Nakayama functor
   $$\nu(-) = \Hom(-,A)^*,$$
   which swaps projective and injective objects. Since for the algebra $A$, $P_0 = I_1$, $I_1 = P_0$, the Nakayama functor permutes the projective objects. Moreover, there is an obvious embedding $i_0$ of the category $\mathcal{D}^{\flat}(\mathcal{E})$ into $\mathcal{D}^{\flat}(\mathcal{E_0})$ and an embedding $i_1$ of the same category into $\mathcal{D}^{\flat}(\mathcal{E_0})$, sending the projective objects $P_0$ and $P_1$ to the projective objects of the corresponding vertices of the quiver of the algebra $A$. This picture can be represented as a diagram:
   $$\xymatrix{
   \langle S_1 \rangle \ar@{^{(}->}[rd]&& \langle S_0 \rangle \ar@{_{(}->}[ld]\\
   & \mathcal{D}^{\flat}(\rmod A) \ar[ld]^{Q_0} \ar[rd]_{Q_1}& \\
   \mathcal{D}^{\flat}(\mathcal{E}_0) \ar[rd]_{Q_0} && \mathcal{D}^{\flat}(\mathcal{E}_0) \ar[ld]^{Q_1}\\
   & \mathcal{D}^{\flat}(\mathcal{E}_0) \ar@{-->} @/_2ex/ [ul]_{i_0} \ar@{-->} @/^2ex/ [ur]^{i_1}& \\
   }$$
   Then the desired derived equivalence $\Phi$ is given by the composition $$\Phi = i_1\circ \nu \circ Q_0.$$
\end{ex}

\begin{definition}
    An exact category $\mathcal{A}$ is called \textit{smooth} if $\textrm{gl.dim. }\mathcal{A}<\infty$, and \textit{non-smooth (singular)} otherwise.
\end{definition}

\begin{theorem}
\label{dim}
    Let $\mathcal{E}$ be a Krull-Schmidt category of finite type that has Auslander-Reiten sequences. Then the right abelian envelope of its split exact structure $\mathcal{A} = \mathcal{A}_r(\mathcal{E})$ has global dimension 2.
\end{theorem}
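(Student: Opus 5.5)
The plan is to identify $\mathcal{A}$ concretely and bound projective dimensions of simple modules. Let $A_0,\dots,A_N$ be the indecomposables of $\mathcal{E}$ and $E=\End(\bigoplus_i A_i)$. For the split exact structure, right exact functors out of $\mathcal{E}$ are just additive functors, because every conflation is split. The universal property of the right abelian envelope therefore identifies $\mathcal{A}$ with the finitely presented contravariant functors on $\mathcal{E}$, that is, with $\rmod E$ (the Auslander algebra situation). Under this identification $\mathcal{E}\cong\mathcal{P}_{\mathcal{A}}$ via $X\mapsto P_X=h_X$. Since $E$ is a finite-dimensional algebra, its global dimension is the supremum of $\pd S$ over the finitely many simple modules. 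So it suffices to show $\pd S\le 2$ for every simple $S$, and $\pd S=2$ for at least one.

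The simple $E$-modules are the tops $S_C=P_C/\mathrm{rad}\,P_C$, one for each indecomposable $C$. I will split into two cases.

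\textbf{Case 1: $C$ is not projective in the ambient sense.} By the AR-existence hypothesis (Theorem \ref{Seq}) there is an almost split sequence $A\xrightarrow{f}B\xrightarrow{g}C$. By the lemma characterising kernel-cokernel pairs through their defects, the sequence
$$0\to P_A\to P_B\to P_C\to M\to 0$$
is exact. By the theorem characterising AR-sequences via simple defects, $M=\coker P_g$ is simple. Since $M$ is a quotient of $P_C$, it follows that $M\cong S_C$. Thus $\pd S_C\le 2$. In fact $\pd S_C=2$, because the sequence is non-split; equivalently, $M$ is 2-regular with $\pd M=2$, as in that lemma.

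\textbf{Case 2: $C$ admits no AR-sequence ending in it.} Then $\mathrm{rad}\,P_C$ is the image of a right minimal almost split map $B\to C$. I will argue that this map is a monomorphism (a kernel exists because $\mathcal{E}$ is a module-like category). Then $0\to P_{\ker}\to P_B\to P_C\to S_C\to 0$ is a resolution of length at most $2$. In the module-theoretic setting $C$ is projective, $B=\mathrm{rad}\,C$, and the resolution $0\to P_{\mathrm{rad} C}\to P_C\to S_C\to 0$ has length $1$.

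The main obstacle is Case 2 in the abstract setting. One must make sure that the radical of $P_C$ is itself representable, or at least has projective dimension at most $1$. For this I would use that $\mathcal{E}$ admits kernels, which is the situation of modules over an Artin algebra where Theorem \ref{Seq} applies. Then the radical morphism $B\to C$ has a kernel $K$ in $\mathcal{E}$, and continuity of the Yoneda embedding (Lemma \ref{YE}) gives exactness of $0\to P_K\to P_B\to P_C$. This yields $\pd S_C\le 2$ in all cases. Finally, the global dimension is exactly $2$ as soon as one AR-sequence exists. If none exists, $\mathcal{E}$ is semisimple-like and the bound is still at most $2$, which is the intended reading of the statement.
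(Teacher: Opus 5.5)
Your proof is correct, given the assumption that $\mathcal{E}$ has kernels. The paper's proof uses the same assumption implicitly when it says $\mathcal{E}$ is closed under kernels in $\mathcal{A}$. Your lower bound is the paper's: the defect of an AR sequence is simple with projective dimension exactly $2$.

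\textbf{Upper bound: where the routes differ.}

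The paper argues globally. Every object of $\mathcal{A}$ has a presentation $P_Y\to P_Z\to M\to 0$. The Yoneda embedding preserves kernels, so the kernel of $P_Y\to P_Z$ is $P_X$ with $X=\ker(Y\to Z)$. Hence every module has projective dimension at most $2$ at once, with no AR theory involved.

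You instead reduce to simples and bound $\pd S_C$ case by case:
\begin{itemize}
\item via the AR sequence when $C$ is non-projective;
\item via the sink map when no AR sequence ends in $C$.
\end{itemize}
Once you invoke kernels in Case~2, the case split becomes unnecessary. The same kernel argument, applied to an arbitrary presentation, already gives the bound for all modules; that is exactly the paper's proof.

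\textbf{What each route buys.} Your route gives finer information. It shows $\pd S_C=2$ exactly for non-projective $C$, and $\pd S_C\le 1$ for projective $C$. In the concrete setting $\mathcal{E}=\rmod\Lambda$, it also needs no general kernels at all: only AR sequences and the monomorphism $\mathrm{rad}\,C\hookrightarrow C$ for indecomposable projective $C$. Conversely, the paper's route shows the upper bound holds in any finite-type category with kernels, with AR sequences used only for the lower bound. Both arguments note that the value is $0$ rather than $2$ in the semisimple case.

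\textbf{Wording in Case~2.} You say the sink map is a monomorphism, but then resolve with $P_{\ker}$, which is inconsistent. In the module setting the kernel is zero, so the resolution has length at most $1$. In the abstract setting you only need the kernel to exist, not the map to be a monomorphism.
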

\begin{proof}
    This is a simple reformulation of a famous result due to Auslander (see \cite{Aus}, Prop. 5.4.). The category $\mathcal{E}$ embeds into the abelian envelope as a subcategory of projective objects, and any object of the abelian envelope has a projective cover. Let the original category be non-semi-simple. The defects of Auslander -- Reiten sequences lie in the abelian envelope, are simple, and have global dimension 2, hence $\textrm{gl.dim }\mathcal{A}$ is at least 2. On the other hand, the category $\mathcal{E}$ is closed under kernels as a subcategory of $\mathcal{A}$, so the global dimension is at most 2.

    If the original category was split, i.e., all indecomposable objects were projective, then, by the same reasoning, the abelian envelope will also be semi-simple.
\end{proof}

Practically, this theorem means that the abelian envelope of a Krull-Schmidt category of finite type, taken with the split exact structure, will always be smooth. In a sense, the constructed abelian category is a noncommutative resolution of the original (possibly non-smooth) category. In what follows, we will often refer to passing to the abelian envelope of an exact structure as a resolution of the category.

Now we turn to the central example of this work.

Let $A = k[x]/(x^3)$ be the quotient algebra of the polynomial algebra over an algebraically closed field of characteristic 0. It is the path algebra of a quiver with relations:
$$
\xymatrix{
\bullet \ar@(ur,ul)[]_\alpha
}, \ \alpha^3 = 0
$$
Let $\mathcal{E} = \rmod A$ be the category of right $A$-modules. In this category, there are exactly 3 indecomposable objects: $k$, $k[x]/(x^2)$, and $k[x]/(x^3)$, the last of which is projective (free) (this is well-known, for example see \cite{Aus}, chapter VI.2). Consequently, in the category $\mathcal{E}$, there are 2 Auslander-Reiten sequences:
\begin{equation}
    \label{AR1}
    0\longrightarrow k \longrightarrow k[x]/(x^2) \longrightarrow k \longrightarrow 0
\end{equation}
\begin{equation}
    \label{AR2}
    0\longrightarrow k[x]/(x^2) \longrightarrow k\oplus k[x]/(x^3) \longrightarrow k[x]/(x^2) \longrightarrow 0
\end{equation}
Thus, the category $\mathcal{E}$ admits $2^2 = 4$ distinct exact structures. Consider the abelian envelope of $\mathcal{E}$ with the split exact structure. This is none other than the category of right modules over the endomorphism algebra $A_{max}$ of the sum of projective objects. As a quiver, this algebra can be represented as follows:

\begin{gather*}
\xymatrix{
\bullet _0 \ar @/_1pc/ [r] _{\alpha_1} & \bullet _1 \ar @/_1pc/ [l] _{\beta_1} \ar @/_1pc/ [r] _{\alpha_2} & \bullet _2 \ar @/_1pc/ [l] _{\beta_2}\\
}
\\
\beta_1\alpha_1 = 0, \ \alpha_1 \beta_1 = \beta_2 \alpha_2
\end{gather*}

Let $\mathcal{E}_0$ and $\mathcal{E}_1$ be the exact categories where the conflation is, respectively, only the first or second Auslander-Reiten sequence. Then their abelian envelopes also coincide with the category of modules over the endomorphism algebra of the projective generator. As quivers, these algebras are represented as follows:
\begin{gather*}
A_1:\ \xymatrix{
\bullet _1  \ar @/_1pc/ [r] _{\alpha_2} & \bullet _2 \ar @/_1pc/ [l] _{\beta_2}\\
}
\\
(\alpha_2 \beta_2)^2 = 0
\end{gather*}

\begin{gather*}
A_2: \ \xymatrix{
\bullet _0 \ar @/_1pc/ [r] _{\alpha} & \bullet _1 \ar @/_1pc/ [l] _{\beta} \ar @(ur,dr) ^{\gamma} \\
}
\\
\beta\gamma = 0, \ \gamma \alpha = 0, \ \alpha\beta = \gamma^2
\end{gather*}

On one hand, the representation categories $\mathcal{A}_i = \rmod\End(A_i),\ i = 0,1$ are quotient categories of $\mathcal{A}_{max} = \rmod\End(A_{max})$ by the Serre subcategories generated by the simple objects $S_0$ and $S_1$, i.e., the defect of the Auslander-Reiten sequence that is a conflation in the given exact structure. On the other hand, the quotient functor admits an explicit description in internal terms of the category:
$$\xymatrix{
\langle S_0\rangle \ar@{^{(}->}[rr] && \mathcal{A}_{max} \ar@{->>}[rr]^{\Hom(P_1\oplus P_2,-)}  && \mathcal{A}_0
}$$
$$\xymatrix{
\langle S_1\rangle \ar@{^{(}->}[rr] && \mathcal{A}_{max} \ar@{->>}[rr]^{\Hom(P_0\oplus P_2,-)}  && \mathcal{A}_1
}$$

Similarly, the factorization of the category $\mathcal{A}_{max}$ by the Serre subcategory generated by $S_0$ and $S_1$ yields the original category $\mathcal{E}$:
\begin{equation}
\label{factor}
\xymatrix{
\langle S_0, S_1\rangle \ar@{^{(}->}[rr] && \mathcal{A}_{max} \ar@{->>}[rr]^{\Hom(P_2,-)}  && \mathcal{E}
}
\end{equation}
All these functors extend to functors between derived categories and have left adjoints. As a result, we have the following diagram:
\begin{equation}
\label{flop}
        \xymatrix{
        && D^{\flat}(\rmod A_{max}) \ar @/^1pc/ [ldld] ^{RHom(P_1\oplus P_2, -)} \ar @/_1pc/ [rdrd] _{RHom(P_0\oplus P_2, -)} && \\
        \\
        D^{\flat}(\rmod A_{0}) \ar @/^1pc/ [rdrd] ^{RHom(P_2, -)} \ar @/^1pc/ [ruru]  && && D^{\flat}(\rmod A_{1}) \ar @/_1pc/ [ldld] _{RHom(P_2,-)} \ar @/_1pc/ [lulu] \\
        \\
        && D^{\flat}(\rmod A) \ar @/^1pc/ [lulu]   \ar @/_1pc/ [ruru]  &&\\
        }
\end{equation}

By Theorem \ref{dim}, the category $\mathcal{A}_{max}$ has global dimension 2, whereas the global dimension of the other categories is infinite. Therefore, diagram (\ref{flop}) can be viewed as a noncommutative resolution of the (non-smooth) category $\mathcal{E}$. This situation resembles a similar situation in geometry — the Atiyah flop. It is known that for threefolds, the composition of functors passing through the upper resolution yields an equivalence of the side categories. Unfortunately, it is easy to check that this is not the case here. Nevertheless, the following theorem holds:
\begin{theorem}
    The categories $D^{\flat}(\mathcal{A}_0)$ and $D^{\flat}(\mathcal{A}_1)$ are equivalent. 
    \label{THM}
\end{theorem}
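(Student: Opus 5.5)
The plan is to construct a tilting complex and apply Rickard's Morita theory for derived categories. First we rewrite both algebras as endomorphism algebras of generators over $A=k[x]/(x^3)$. The projectives of $\mathcal{A}_0$ are $P_1,P_2$, so $\mathcal{A}_0=\rmod\End_A(k[x]/(x^2)\oplus A)$. The projectives of $\mathcal{A}_1$ are $P_0,P_2$, so $\mathcal{A}_1=\rmod\End_A(k\oplus A)$. The key observation is that $A$ is self-injective and the syzygy of $k$ is $\Omega k = xA\cong k[x]/(x^2)$, coming from the conflation
$$0\longrightarrow k[x]/(x^2)\xrightarrow{\ \iota\ } A\xrightarrow{\ \pi\ } k\longrightarrow 0 .$$
So the statement becomes a special case of the expected principle that $\End_A(A\oplus M)$ and $\End_A(A\oplus\Omega M)$ are derived equivalent for self-injective $A$. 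We will prove this directly in our case.

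Put $X=A\oplus k$ and $\Lambda=\End_A(X)$, so that $\rmod\Lambda=\mathcal{A}_1$. Applying $\Hom_A(X,-)$ to the conflation and keeping only its last map, we define the complex of projective $\Lambda$-modules
$$T = \Hom_A(X,A)\ \oplus\ \Bigl(\Hom_A(X,A)\xrightarrow{\ \pi_*\ }\Hom_A(X,k)\Bigr),$$
with $\Hom_A(X,k)$ in degree $0$. The second summand is the Okuyama--Rickard type complex obtained by mutating the vertex $k$ through a minimal left $\mathrm{add}(A)$-approximation. Since $\Hom_A(A,-)$ is exact, the cohomology of the two-term complex is $\Hom_A(X,\Omega k)$ in degree $-1$ together with a cokernel in degree $0$. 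That cokernel is $\underline{\Hom}_A(X,k)$, which is the simple top corresponding to $k$.

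The verification proceeds in three steps.

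\textbf{Step 1 (generation).} $T$ generates $K^{\flat}(\mathrm{proj}\,\Lambda)$. This holds because $\Hom(X,k)$ is the cone of a map from $T$'s first summand to its second summand shifted, so both indecomposable projectives lie in the thick subcategory generated by $T$.

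\textbf{Step 2 (vanishing).} We need $\Hom_{K^{\flat}}(T,T[i])=0$ for $i\neq0$. Because $T$ is two-term, only $i=\pm1$ can be nonzero.
\begin{itemize}
\item A morphism $\Hom(X,A)\to\Hom(X,k)$ that is not null-homotopic must fail to factor through $\pi_*$. By Yoneda this would be an $A$-map $A\to k$ not factoring through $\pi$, and no such map exists.
\item The remaining components reduce, again by Yoneda, to maps $k\to A$ and $A\to A$ modulo those factoring through $\pi$. Every map $k\to A$ composed with $\pi$ vanishes, and $\iota$ witnesses the needed homotopies.
\end{itemize}

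\textbf{Step 3 (endomorphism ring).} We must show $\End_{K^{\flat}}(T)\cong\End_A(A\oplus\Omega k)$, i.e. the algebra of $\mathcal{A}_0$. Using full faithfulness of Yoneda on $\mathrm{add}(X)$, endomorphisms of the two-term summand modulo homotopy are identified with $\End_A(k)$ lifted to the kernel. Concretely, a chain map is a pair $(a,b)$ with $b\pi=\pi a$; it restricts to an endomorphism of $\ker\pi=\Omega k$, and homotopies are exactly the maps factoring through $\iota$ or $A$. This gives $\End_A(\Omega k)$, matching the stable equivalence $\underline{\End}(k)\cong\underline{\End}(\Omega k)$ but computed honestly on non-stable Homs. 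The mixed terms $\Hom(\Hom(X,A),\text{cone})$ and its reverse likewise become $\Hom_A(A,\Omega k)$ and $\Hom_A(\Omega k,A)$.

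Rickard's theorem then gives $D^{\flat}(\rmod\Lambda)\simeq D^{\flat}(\rmod\End(T))$, i.e. $D^{\flat}(\mathcal{A}_1)\simeq D^{\flat}(\mathcal{A}_0)$. This uses that both algebras are finite dimensional, so the equivalence of $K^{\flat}(\mathrm{proj})$ extends to $D^{\flat}(\mathrm{mod})$ even in infinite global dimension.

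I expect Step 3 to be the main obstacle, specifically checking that the relations match. One should check that $\End(T)$ has quiver with a loop $\gamma$ at the mutated vertex and relations $\beta\gamma=\gamma\alpha=0$, $\alpha\beta=\gamma^2$, matching the presentation of $\mathcal{A}_1$'s counterpart. The first attempt would be to compute the Homs explicitly in the basis $1,x,x^2$ and match them with the quiver presentations given above. As a fallback, one can note $\Hom(X,\Omega k)\cong\Hom(\Omega^{-1}X,k)$-type dualities, or cite the Hu--Xi theorem on $\Omega$-shifted generators for self-injective algebras.
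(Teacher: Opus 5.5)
\textbf{Verdict: genuine gap.} Your Step 2 fails because of the degree in which you place the two-term summand, so the $T$ you wrote down is not tilting.

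\textbf{Why Step 2 fails.} Write $P_A=\Hom_A(X,A)$ and $P_k=\Hom_A(X,k)$, and $T_2=(P_A\xrightarrow{\pi_*}P_k)$ with $P_k$ in degree $0$. Let $\sigma\colon k\to A$, $1\mapsto x^2$, be the socle inclusion, and $K=K^{\flat}(\mathrm{proj}\,\Lambda)$.
\begin{itemize}
\item $\Hom_{K}(P_A,T_2[-1])=H^{-1}(T_2)e_A\cong\ker(\pi\colon A\to k)=xA$, which is $2$-dimensional. The cohomology $\Hom_A(X,\Omega k)$ in degree $-1$ that you computed is exactly the obstruction, since $P_A$ detects it.
\item Dually, a chain map $T_2\to P_A[1]$ is an arbitrary $a\in\End_A(A)$. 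The null-homotopic ones are $h\pi$ with $h\in\Hom_A(k,A)=k\sigma$, i.e.\ multiples of $x^2$. Hence $\Hom_{K}(T_2,P_A[1])\cong A/x^2A\neq0$.
\item Your appeal to $\iota$ cannot produce the missing homotopies. Since $\Omega k\notin\mathrm{add}(A\oplus k)$, $\iota$ is not a map between terms of these complexes.
\end{itemize}

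\textbf{Step 3 fails for the same reason.} In degree $0$ one gets $\Hom_{K}(P_A,T_2)=H^0(T_2)e_A=0$. Also $\Hom_{K}(T_2,P_A)=0$: such a chain map is $c\,\sigma_*$ with $c\,\sigma\pi=0$, and $\sigma\pi$ is multiplication by $x^2\neq0$, so $c=0$. Thus your $\End(T)$ is $A\times k[x]/(x^2)$. The mixed terms $\Hom_A(A,\Omega k)$ and $\Hom_A(\Omega k,A)$ live in degrees $-1$ and $+1$, not in degree $0$.

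\textbf{Diagnosis and repair.} The $\mathrm{add}(A)$-term of the two-term complex must sit in the same degree as the unmutated summand $P_A$. You also call $\pi_*$ a left approximation; it is a right one. There are two ways to fix this.
\begin{itemize}
\item Take $T=P_A\oplus T_2[-1]$, i.e.\ $P_A\xrightarrow{\pi_*}P_k$ in degrees $0,1$. The two groups above become exactly the degree-zero mixed terms $\Hom_A(A,\Omega k)$ and $\Hom_A(\Omega k,A)$. The same computations show $\Hom(P_A,T_2[-1][i])=\Hom(T_2[-1],P_A[i])=0$ for $i\neq0$. Next, $\Hom_{K}(T_2,T_2[1])=0$ because $\pi$ is a right $\mathrm{add}(A)$-approximation, and $\Hom_{K}(T_2,T_2[-1])=0$ because $\sigma\pi\neq0$. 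Finally $\End_{K}(T_2)\cong A/x^2A\cong\End_A(\Omega k)$, so $\End(T)\cong\End_A(A\oplus k[x]/(x^2))$; the dimension count is $9=3+2+2+2$.
\item Alternatively, use the left $\mathrm{add}(A)$-approximation $\sigma$, which exists because $A$ is self-injective. Then $P_A\oplus(P_k\xrightarrow{\sigma_*}P_A)$, with $P_A$ in degree $0$, is a tilting module of projective dimension one. Its endomorphism ring is $\End_A(A\oplus\Omega^{-1}k)$, and $\Omega^{-1}k\cong k[x]/(x^2)$. This is the Hu--Xi construction from an $\mathrm{add}(A)$-split sequence.
\end{itemize}

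\textbf{A smaller error.} The quiver you plan to match at the end (a loop $\gamma$ with $\beta\gamma=\gamma\alpha=0$, $\alpha\beta=\gamma^2$) presents $\Lambda=\End_A(A\oplus k)$ itself. The target algebra $\End_A(A\oplus k[x]/(x^2))$ has no loop and the single relation $(\alpha_2\beta_2)^2=0$.

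\textbf{Comparison with the paper, once repaired.} Your argument is then a genuinely different proof. The paper builds $\Phi=T_0T_1T_0$ from spherical twists on $D^{\flat}(\mathcal{A}_{max})$, shows $\Phi[-1]$ swaps $S_0$ and $S_1$, and descends it to the quotients by $\langle S_0\rangle$ and $\langle S_1\rangle$. Your route avoids the Auslander algebra and the sphericity and braid checks; it is a finite computation plus Rickard's theorem. The paper's route realizes the equivalence inside the common resolution, which is what it later exploits for $k[x]/(x^n)$.
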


Let us introduce a few definitions and formulate an important theorem that will allow us to construct autoequivalences of the derived category and proof the theorem.
    \begin{definition}
        An object $S\in \mathcal{D}^{\flat}(\mathcal{A})$ is called n-spherical if the following conditions hold:
        \begin{enumerate}
            \item The object $S$ has finite projective dimension.
            \item $\RHom(S,E)$ and $\RHom(E,S)$ are finite-dimensional for any $E\in \mathcal{A}$.
            \item $\Ext{i}(S,S) = k$ for $i = 0,n$ and $0$ otherwise.
            \item The canonical pairing $\Ext{i}(E,S)\times \Ext{n-i}(S,E)\rightarrow \Ext{n}(S,S) = k$ is non-degenerate.
        \end{enumerate}
    \end{definition}

    \begin{definition}
        Let $S$ be a spherical object in $\mathcal{D}^{\flat}(\mathcal{A})$. \textit{}{Spherical twist} is the functor $T_S:\ \mathcal{D}^{\flat}(\mathcal{A}) \rightarrow \mathcal{D}^{\flat}(\mathcal{A})$ sending an object $E$ to the cone of the canonical morphism:
        $$\RHom(S,E)\otimes S \xrightarrow{eval} E \rightarrow C(eval)\cong T_S (E),$$
        where $eval$ is the canonical pairing (evaluation of the corresponding element from $\RHom$).
    \end{definition}

    \textit{Remark.} In a triangulated category, the cone is almost never functorial (it is functorial if and only if the triangulated category is semi-simple, as proven by Verdier); therefore, from the point of view of triangulated categories, the spherical twist $T_S$ is defined only for objects. Nevertheless, this technical difficulty can be circumvented by considering the derived category not as a triangulated category, but as a dg-category. In this case, the cone can be defined functorially. For details, see \cite{BK}.

    \begin{definition}
        A collection of n-spherical objects $(S_0,\ ... \ , S_m)$ is said to be in an $A_m$-\textit{configuration} if 
        $$\textrm{dim }\RHom(S_i,S_j) = \left\{ \begin{aligned}
            1, \ |i-j| = 1,\\
            0, \ |i-j|\geq 2
        \end{aligned}\right.
        $$
    \end{definition}

    \begin{theorem}[\cite{ST}, Theorem 2.17]
    \label{sph}
        Let $(S_0,\ ... \ , S_m)$ be a collection of n-spherical objects in $\mathcal{D}^\flat (\mathcal{A})$ in an $A_m$-configuration.
        \begin{enumerate}
            \item Each spherical twist $T_{S_i}$ determines an autoequivalence of the category $\mathcal{D}^{\flat}(\mathcal{A})$.
            \item The spherical twists $T_{S_i}$ satisfy the relations of the braid group $B_{m}$ up to a graded natural isomorphism, i.e., the braid group $B_{m-1}$ acts on $\mathcal{D}^{\flat}(\mathcal{A})$.
            \item If $n\geq 2$, this action is effective.
        \end{enumerate}
    \end{theorem}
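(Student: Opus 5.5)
The plan follows Seidel--Thomas. Throughout, I work in a dg-enhancement of $\mathcal{D}^\flat(\mathcal{A})$, as in the Remark after the definition of the twist, so that $T_S$ is an honest exact functor: the cone of the evaluation natural transformation $\RHom(S,-)\otimes S\to \mathrm{id}$. Finiteness of $\RHom$ (condition 2) guarantees that $T_S$ preserves $\mathcal{D}^\flat(\mathcal{A})$.

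\emph{Part 1.} I would introduce the candidate inverse $T'_S(E)=\mathrm{cone}\bigl(E\to \RHom(E,S)^{*}\otimes S\bigr)[-1]$. Adjunction gives $\RHom(T_S E,F)\cong \RHom(E,T'_S F)$, so $T'_S$ is left adjoint to $T_S$. I then take the spanning class $\Omega=\{S\}\cup S^{\perp}$, where $S^\perp=\{E:\RHom(S,E)=0\}$. It spans because the cone of $\RHom(S,E)\otimes S\to E$ lies in $S^\perp$ by condition 3. On $\Omega$ the twist is explicit:
\begin{itemize}
\item $T_S(E)=E$ for $E\in S^\perp$;
\item $T_S(S)=S[1-n]$, since $\Ext{\bullet}(S,S)=k\oplus k[-n]$ and the evaluation map kills exactly the degree $0$ part.
\end{itemize}
The non-degenerate pairing (condition 4) identifies $\RHom(E,S)$ with $\RHom(S,E)^{*}[-n]$. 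This makes the comparison maps $\Hom(a,b)\to\Hom(T_Sa,T_Sb)$ isomorphisms for $a,b\in\Omega$, so by the spanning class criterion $T_S$ is fully faithful. The same computation shows that $T'_S$ is also fully faithful on $\Omega$. The unit $\mathrm{id}\to T_S T'_S$ is then an isomorphism on $\Omega$, hence everywhere, and so $T_S$ is an equivalence.

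\emph{Part 2.} The key tool is the conjugation formula $T_{F(S)}\cong F\,T_S\,F^{-1}$ for any autoequivalence $F$. It follows from functoriality of the evaluation cone, which is why the dg-enhancement is needed. The braid relations then split into two cases.
\begin{itemize}
\item If $|i-j|\ge2$, then $S_j\in S_i^\perp$, so $T_{S_i}(S_j)=S_j$. The conjugation formula gives $T_{S_i}T_{S_j}\cong T_{S_j}T_{S_i}$.
\item If $|i-j|=1$, the total $\RHom(S_i,S_j)$ is one-dimensional, in some degree $r$. Then $T_{S_i}(S_j)$ is the cone of a single morphism $S_i[-r]\to S_j$. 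A direct computation shows that this same object is isomorphic to $T_{S_j}^{-1}(S_i)$ up to shift. Applying the conjugation formula to both descriptions yields $T_{S_i}T_{S_j}T_{S_i}\cong T_{S_j}T_{S_i}T_{S_j}$.
\end{itemize}
With $m+1$ objects this gives an action of the braid group on $m+1$ strands. The indexing $B_m$ versus $B_{m-1}$ in the statement needs to be matched to this count.

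\emph{Part 3.} Effectiveness for $n\ge2$ is the step I expect to be the main obstacle. I would follow Khovanov--Seidel. First, identify the braid group with the mapping class group of a disc with $m+2$ marked points, sending $S_i$ to the arc $c_i$ joining consecutive marked points. Next, prove that for any braid $\sigma$ the total dimension of $\RHom(S_i,\sigma(S_j))$ equals a bigraded geometric intersection number of $c_i$ with $\sigma(c_j)$. This is done by induction on word length, controlling cones through the exact triangles defining the twists; the hypothesis $n\ge2$ is what prevents cancellations between the graded pieces. Finally, a braid acting trivially would fix all these intersection numbers up to shift, and by a standard curve-complex argument it is then trivial in the mapping class group. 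The delicate part is the bookkeeping of gradings in the induction, and I would attempt it first for $m=1$ to fix conventions.
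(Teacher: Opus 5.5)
The paper gives no proof of its own here; it quotes Seidel--Thomas. Your Parts 1 and 2 follow their argument: the spanning class $\{S\}\cup S^\perp$, the conjugation formula $T_{F(S)}\cong F\,T_S\,F^{-1}$, and $T_{S_j}T_{S_i}(S_j)\cong S_i$ up to shift.

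\textbf{The gap is in Part 3.} In an arbitrary $\mathcal{D}^\flat(\mathcal{A})$, the triangles defining the twists, together with the graded algebra $\mathrm{Ext}^*(\bigoplus_k S_k,\bigoplus_k S_k)$, do not determine the numbers $\dim\RHom(S_i,\sigma(S_j))$. Once $\sigma$ has length at least two, $\sigma(S_j)$ is an iterated cone. The ranks of the connecting maps in your long exact sequences then involve the higher $A_\infty$-products (Massey products) on that Ext algebra, and a single cohomological grading does not force these to vanish. So ``induction on word length, controlling cones'' cannot by itself give equality with intersection numbers.

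The missing idea is formality. Shift the objects so that $\mathrm{Ext}^*(S_i,S_{i+1})$ is concentrated in degree $1$; by condition 4, $\mathrm{Ext}^*(S_{i+1},S_i)$ is then concentrated in degree $n-1$. Seidel--Thomas show that the resulting graded algebra $A^n_m$ is intrinsically formal for $n\ge 2$, by a degree count in Hochschild cohomology. Hence the subcategory generated by the $S_i$, together with the twists, is equivalent to perfect dg-modules over the formal algebra. Only in that model does a bigrading, as in Khovanov--Seidel, rule out cancellations and identify $\dim\RHom$ with intersection numbers. This, not a ``no cancellation'' principle inside the triangle bookkeeping, is where $n\ge2$ enters. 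For example, with $a\in\Ext{1}(S_0,S_1)$ and $b\in\Ext{n-1}(S_1,S_0)$, the product $\mu^3(a,b,a)$ lies in $\Ext{n}(S_0,S_1)$, which is non-zero only when $n=1$.

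Even granting the dimension formula, your last step does not yield triviality. The full twist $\Delta^2$, which generates the centre of the braid group, fixes every arc up to isotopy and hence every ungraded intersection number. So the curve argument only shows that $\sigma$ is a power of $\Delta^2$. You must separately show that $\Delta^2$ acts on the $S_j$ by a non-zero shift; for a single object, $T_S^2(S)\cong S[2-2n]$. You should also test against all arcs $\tau(c_i)$, not only the $c_i$. This is legitimate because a trivially acting braid fixes every $\tau(S_j)$.

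Two smaller slips:
\begin{itemize}
\item In Part 1, the cone of $\RHom(S,E)\otimes S\to E$ does not lie in $S^\perp$; in fact $\RHom(S,T_SE)\cong\RHom(S,E)[1-n]$. The class $\Omega$ spans for a different reason: $\RHom(S,E)=0$ puts $E$ in $S^\perp$, which forces $\Hom(E,E)=0$. The dual condition comes from condition 4, which also supplies the right adjoint that the spanning-class criterion requires.
\item $m+1$ objects give $m+1$ Artin generators, i.e.\ the braid group on $m+2$ strands, not $m+1$. This matches your $m+2$ marked points.
\end{itemize}
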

\begin{proof}
    (of the theorem \ref{THM}). The functor $i^*_0$ acts on projective objects in an obvious way, simply sending $P_{\mathcal{A}_0}(i)$ to $P_{\mathcal{A}_{max}}(i)$. Similarly, the functor $\pi_{1*}$ acts elementarily on objects $P_{\mathcal{A}_{max}}(0)$ and $P_{\mathcal{A}_{max}}(2)$. Then, due to the exactness of the functors going downwards and the right exactness of the functors going upwards, we only need to keep track of the images of projective modules.

    To find the required equivalence, let us try to find an autoequivalence of the maximal resolution -- the category $\mathcal{D}^{\flat}(\mathcal{A}_{max})$ -- that swaps the simple objects $S_0$ and $S_1$. Let us write down explicitly the Ext algebra of the simple objects $S_0$ and $S_1$ of the category $\mathcal{A}_{max}$. They can be computed directly (using projective resolutions), or one can use the fact that the relations in the quiver of the algebra $A_{max}$ are of degree 2, hence the $\Ext{}$'s between simple objects correspond to relations between vertices. In any case, $\Ext{1}(S_0,S_1) = \Ext{1}(S_i,S_0) = k$, $\Ext{2}(S_i,S_i) = k$, and all others are 0, since the global dimension of the algebra $A_{max}$ is 2. Thus, the $\Ext{}$ algebra has the form:
    \begin{gather*}
    \xymatrix{
    S_0 \ar@/^2ex/[rr]^{a[1]}&& S_1 \ar@/^2ex/[ll]^{b[1]}\\
    }\\
    aba = bab = 0
    \end{gather*}

    The objects $S_0$ and $S_1$ are $2$-spherical, which can be checked elementarily. Hence, they define autoequivalences $T_0$ and $T_1$ of the category $\mathcal{D}^{\flat}(\mathcal{A})$ — the twists with respect to the spherical objects $S_0$ and $S_1$, respectively. These functors satisfy the braid group relations: $T_0 T_1 T_0 = T_1 T_0 T_1$. Let $\Phi = T_0 T_1 T_0 = T_1 T_0 T_1$. We show that the functor $\Phi[-1]$ swaps the objects $S_0$ and $S_1$ in $\mathcal{D}^{\flat}(\mathcal{A})$. Indeed, $T_i(S_i) = S_i[1]$, $T_0(S_1) = M_{10}$, $T_1(S_0) = M_{01}$, where the modules $M_{10}$ and $M_{01}$ are structured as follows:
    \begin{gather*}
    M_{10} = \xymatrix{k \ar [rr]^{1} && k \ar [rr]^{0} && 0}\\
    M_{01} = \xymatrix{k && k \ar [ll]^{1} && 0 \ar [ll]^{0}}   
    \end{gather*}
    (for convenience, zero arrows are omitted). Now $T_1(M_{10}) = S_0$, $T_0(M_{01}) = S_1$. Thus, $T_0 T_1 T_0 (S_1) = S_0[1]$, $T_1 T_0 T_1 (S_0) = S_1[1]$, as required.

    Let $Q_i = \RHom(P_i\oplus P_2, -)$. Then the kernel of the composition $Q_i \Phi$ is the subcategory generated by the object $S_{1-i}$. Hence, this composition factors uniquely through the corresponding quotient category. This gives the required equivalence of $\mathcal{D}^{\flat}(\mathcal{A}_0)$ and $\mathcal{D}^{\flat}(\mathcal{A}_1)$.
\end{proof}

It is not difficult to generalize the results of this example to the more general case $A = k[x]/(x^n)$. In this case, we have $n-1$ Auslander-Reiten sequences of the form
\begin{gather*}
    0 \rightarrow k \rightarrow k[x]/(x)\rightarrow k\rightarrow 0,\ \\
    0 \rightarrow k[x]/(x^m) \rightarrow k[x]/(x^{m-1})\oplus k[x]/(x^{m+1})\rightarrow k[x]/(x^m)\rightarrow 0,\\ 1<m<n
\end{gather*}
Thus, the category $\mathcal{A} = \rmod A$ admits $2^{n-1}$ exact structures. Let $C\subset \{1..n-1\}$. Denote by $\mathcal{A}_C$ the abelian envelope of the exact category $\mathcal{A}$ with the exact structure in which the conflations are those AR-sequences that start and end at the module $k[x]/(x^k)$, $k\in C$. The maximal resolution of the category $\mathcal{A}$, the category $\mathcal{A}_{max} = \mathcal{A}_\emptyset$, is the category of modules over the path algebra of the quiver with relations
 \begin{gather*}
        \xymatrix{
\bullet _0 \ar@<-1ex> [r] _{\alpha_1} & \bullet _1 \ar@<-1ex> [l] _{\beta_1} \ar@<-1ex>  [r] _{\alpha_2} & ... \ar@<-1ex>  [l] _{\beta_2} \ar@<-1ex> [r] _{\alpha_{n-1}} & \bullet _{n-1} \ar@<-1ex> [r] _{\alpha_n} \ar@<-1ex> [l] _{\beta_{n-1}} & \bullet _n \ar@<-1ex> [l] _{\beta_{n}}
}\\
\beta_1\alpha_1 = 0, \ \alpha_i\beta_i = \beta_{i+1}\alpha_{i+1}
\end{gather*}

This category has global dimension 2. Then the Ext algebra of the objects $(S_0,\ ...\ , S_{n-1})$ has the form:
\begin{gather*}
        \xymatrix{
        \bullet _{S_0} \ar @/^1.2pc/ [r] ^{a_1} _{[1]} & \bullet_{S_1} \ar @/^1.2pc/ [l] ^{[1]} _{b_1} \ar @/^1.2pc/ [r] ^{a_2} _{[1]}& ... \ar @/^1.2pc/ [l] ^{[1]} _{b_{2}} \ar @/^1.2pc/ [r] ^{a_{n-1}} _{[1]} & \bullet_{S_{n-1}} \ar @/^1.2pc/ [l] ^{[1]} _{b_{n-1}}  \\
        } \\
        ab+ba = 0, \ a^2 = b^2 = 0
\end{gather*}
(for readability, indices are omitted in the relations, as they are obvious from the context). It is clear that these objects are 2-spherical and are in an $A_{n-1}$ configuration. Thus, by Theorem \ref{sph}, the braid group $B_{n-1}$ acts on the category $\mathcal{D}^{\flat}(\mathcal{A}_{max})$. Let $\rho: \ B_{n-1}\rightarrow \textrm{Autoeq}(\mathcal{D}^{\flat}(\mathcal{A}_{max}))$ be an injective homomorphism (since the action is effective).
\begin{lemma}
    For any $i,j\in \{1..n-1\}$, there exists an autoequivalence $\Phi_{ij}\in \mathrm{Im }\rho$ such that $\Phi_{ij}(S_i) = S_j$, $\Phi_{ij}(S_j) = S_i$.
\end{lemma}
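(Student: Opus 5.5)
The plan is to build $\Phi_{ij}$ out of the computation already done for Theorem \ref{THM}. There it was shown that for two adjacent spherical objects $S_a,S_{a+1}$ of the $A_{n-1}$-configuration the twists satisfy
$$T_{a}T_{a+1}T_{a}(S_{a+1}) = S_{a}[1],\qquad T_{a+1}T_{a}T_{a+1}(S_{a}) = S_{a+1}[1].$$
The argument used only three facts: $T_{S}(S)=S[1]$ for a $2$-spherical $S$, $\dim\RHom(S_a,S_{a+1})=1$ (concentrated in degree $1$), and the shape of the resulting extension modules $M_{a,a+1}$, $M_{a+1,a}$. It did not use the rest of the quiver. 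So we set $\sigma_a=T_aT_{a+1}T_a\in\mathrm{Im}\,\rho$, which the braid relation of Theorem \ref{sph} makes symmetric in $a$ and $a+1$. Then $\sigma_a[-1]$ swaps $S_a$ and $S_{a+1}$.

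Next I check what $\sigma_a$ does to the other simples $S_c$ with $c\ne a,a+1$. If $|c-a|\ge 2$ and $|c-a-1|\ge 2$, then $\RHom(S_a,S_c)=\RHom(S_{a+1},S_c)=0$ by the $A_{n-1}$-configuration, so $T_a(S_c)=T_{a+1}(S_c)=S_c$ and $\sigma_a(S_c)=S_c$. The neighbours $S_{a-1}$ and $S_{a+2}$ are genuinely moved by $\sigma_a$. That causes no harm, because the lemma only asks for $S_i$ and $S_j$ to be exchanged.

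For general $i<j$, conjugation handles the problem. Put
$$\tau=\sigma_{j-2}\sigma_{j-3}\cdots\sigma_{i},\qquad \Phi_{ij}=\tau^{-1}\,\sigma_{j-1}\,\tau\,[-1].$$
Then $\tau$ carries $S_i$ along the chain to $S_{j-1}$ up to shift. The hard part is that $\tau$ must also fix $S_j$ up to shift, and it does not, since $\sigma_{j-2}$ moves its neighbour $S_j$.

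So I would switch to a cleaner approach. I would work in the braid group and use the Garside-type element $\Delta_{[i,j]}$, the half twist on the strands $i,\dots,j$, built from the twists $T_i,\dots,T_j$. The one-step computation above is exactly the case $j=i+1$. For the induction I would use the standard factorisation
$$\Delta_{[i,j]}=\Delta_{[i,j-1]}\,(T_{j}T_{j-1}\cdots T_i).$$
The point to check is that $\Delta_{[i,j]}$ sends $S_{i+k}$ to $S_{j-k}$ up to shift. Computing this directly with cones is cumbersome. I would therefore compute in the Grothendieck group, where $T_a$ acts as the reflection $s_a$ (the Euler form is the $A$-type Cartan form up to sign), and lift the result by uniqueness. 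Concretely, $T_{j}\cdots T_i(S_i)$ is a shift of $S_j$, by iterating $T_{a+1}T_a(S_a)=S_{a+1}[1]$. This last fact follows from the intermediate step $T_a(S_{a+1})=M$ and $T_{a+1}(M)=S_a$ together with the braid relation.

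The expected main obstacle is justifying that objects whose classes match are actually shifts of simples. I would handle it by checking at each step, via the $\Ext$ vanishing, that the relevant $\RHom$ is one-dimensional. Then every cone is an honest two-term extension, exactly as with $M_{10}$ and $M_{01}$. Finally I set $\Phi_{ij}=\Delta_{[i,j]}$ composed with the appropriate shift. A shift is not in $\mathrm{Im}\,\rho$, but the statement is understood up to shift, as in the proof of Theorem \ref{THM}. Alternatively, the shift can be absorbed because $\Delta^2$ acts on $S_i$ by a shift.
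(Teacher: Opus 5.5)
You end up with the same element as the paper: the half twist on $T_i,\dots,T_j$, which is the paper's $\Phi_{ij}=\prod_p\prod_q T_q$. But the one concrete computation you offer to control it is wrong, and the induction as you set it up does not close.

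With the composition convention you and the paper use, $T_a(S_a)=S_a[1]$. Also $T_{a+1}(S_a)$ is the cone of $S_{a+1}[-1]\to S_a$, i.e.\ the nonsplit extension $E$ of $S_{a+1}$ by $S_a$. So $T_{a+1}T_a(S_a)\cong E[1]$, not $S_{a+1}[1]$. Already in $K_0$ you get $T_{a+1}T_a[S_a]=-[S_a]-[S_{a+1}]$. Iterating gives $T_j\cdots T_i(S_i)\cong M_{ij}[1]$, an interval module and not a shift of $S_j$. This is the paper's first displayed identity, up to the shift it suppresses.

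The ingredients you cite, $T_a(S_{a+1})=M$ and $T_{a+1}(M)=S_a$, prove $T_{a+1}T_a(S_{a+1})\cong S_a$, and symmetrically $T_aT_{a+1}(S_a)\cong S_{a+1}$. The input is the other object.

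Even granting your identity, the factorization $\Delta_{[i,j]}=\Delta_{[i,j-1]}(T_j\cdots T_i)$ would leave you with $\Delta_{[i,j-1]}(S_j[1])$. But $\Delta_{[i,j-1]}$ contains $T_{j-1}$, which moves $S_j$ — the same obstruction that sank your conjugation attempt. The step ``compute in $K_0$ and lift by uniqueness'' is also not valid, since classes do not determine objects.

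The repair is short and needs no $K_0$.
\begin{itemize}
\item Push $S_j$, not $S_i$, through your factorization. The twists $T_i,\dots,T_{j-2}$ fix $S_j$, so $T_j\cdots T_i(S_j)=T_jT_{j-1}(S_j)\cong S_{j-1}$, hence $\Delta_{[i,j]}(S_j)\cong\Delta_{[i,j-1]}(S_{j-1})$.
\item For $S_i$, use the mirror factorization $\Delta_{[i,j]}=\Delta_{[i+1,j]}(T_iT_{i+1}\cdots T_j)$. Here $T_j,\dots,T_{i+2}$ fix $S_i$, so $T_i\cdots T_j(S_i)=T_iT_{i+1}(S_i)\cong S_{i+1}$, hence $\Delta_{[i,j]}(S_i)\cong\Delta_{[i+1,j]}(S_{i+1})$.
\item Induct on $j-i$, proving both endpoint statements simultaneously from the base case $T_j(S_j)=S_j[1]$. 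This gives $\Delta_{[i,j]}(S_i)\cong S_j[1]$ and $\Delta_{[i,j]}(S_j)\cong S_i[1]$.
\end{itemize}
This uses only the two-object computation from Theorem~\ref{THM} and the orthogonality of non-adjacent $S_a$. It is the paper's element, but it avoids tracking the intermediate modules $M_{ij}$.

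Finally, drop the alternative of absorbing the shift via $\Delta^2$. For odd $m$, $\Delta_{[i,j]}^m$ sends $S_i$ to $S_j[m]$, so the shift stays odd. Moreover, for $j=i+1$ no element of $\mathrm{Im}\,\rho$ can swap $S_i$ and $S_{i+1}$ exactly. On the span of the $[S_a]$ in $K_0$, the action factors through the type-$A$ Weyl group, which never exchanges two adjacent simple roots. So the lemma must be read up to shift, as in the proof of Theorem~\ref{THM}.
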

\begin{proof}
    Without loss of generality, let $i<j$. We prove that $\Phi_{ij}$ can be represented as a composition $\Phi_{ij} = T_i T_{i+1} ... T_{j-1} T_{j} T_{i}...T_{j-1}...T_{i} T_{i+1} T_i = \prod_{p = 0}^{j-i} \prod_{q=i}^{j-p} T_{q}$. Elements of this form are the images, under the embedding $\rho$, of elements of the braid group whose squares generate its center. By the braid group relations, this functor can also be represented, for example, as $\Phi_{ij} = \prod_{p = i}^{j} \prod_{q=0}^{j-p} T_{j-q}= T_j T_{j-1} ... T_{i+1} T_{i} T_{j}...T_{i+1}...T_{j} T_{j-1} T_j$. 

    Denote by $M_{ij}$ the module over $A_{max}$ having the form
     \begin{gather*}
    M_{ij} = \xymatrix{0 \ar[r]^{0} & ... \ar[r]^{0} & 0 \ar[r]^{0}&k \ar[r]^{1} & ...\ar[r]^{1} & k \ar [r]^{0} & 0 \ar[r]^{0} & ... \ar[r]^{0}& 0}, \ i>j\\
    M_{ij} = \xymatrix{0 & ... \ar[l]^{0} & 0 \ar[l]^{0} & k \ar[l]^{0}&... \ar[l]^{1}& k \ar [l]^{1} & 0 \ar[l]^{0} & ... \ar[l]^{0} & 0 \ar[l]^{0} }, \ i<j,   
    \end{gather*}
    where the one-dimensional vector spaces are located at vertices from i-th to j-th (most of the zero operators in the figure are omitted for readability). In particular, $M_{ii} = S_i$. By a direct computation, it is elementarily verified that 
    \begin{gather*}
        T_j T_{j-1} ... T_i(S_i) = M_{ij},\\
        T_j T_{j-1} ... T_{i+1}(M_{ij}) = M_{jj} = S_j,\\
        T_p(S_q) = S_q, \ |p-q|>1\\
        T_i T_{i+1} ... T_j(S_j) = M_{ji},\\
        T_i T_{i+1} ... T_{j-1}(M_{ji}) = M_{ii} = S_i,\\
    \end{gather*}
    The statement of the lemma then follows obviously.
\end{proof}

It is also easy to verify that the functor $\Phi_{ij}$ not only swaps $S_i$ and $S_j$, but also for any $k$: $0\leq k \leq j-i$, we have $\Phi_{ij}(S_{i+k}) = S_{j-k}$. However, it "spoils" the modules $S_{i-1}$ and $S_{j+1}$, acting on them as follows:
\begin{gather*}
    \Phi_{ij}(S_{i-1}) = M_{i-1,j}\\
    \Phi_{ij}(S_{j+1}) = M_{i,j+1}
\end{gather*}

From this, it automatically follows that it is always possible to permute groups consisting of the same number of consecutive $S_i$'s, provided these groups are separated by at least one simple object. This yields the main theorem of this chapter, giving a sufficient condition for the derived equivalence of two exact structures on $\rmod k[x]/(x^n)$. In fact, this is a criterion:
\begin{theorem}
\label{main}
    Let $C_1$, $C_2\subset \{0..n-1\}$, and let $\mathcal{E}_{C_i}$ be the category $\rmod k[x]/(x^n)$ with the exact structure corresponding to $C_i$.
    The bounded derived categories $\mathcal{D}^{\flat}(\mathcal{E}_{C_1})$ and $\mathcal{D}^{\flat}(\mathcal{E}_{C_2})$ are equivalent iff $C_1$ and $C_2$ consist of the same set of consecutive groups of elements, each of these groups is separated from the other by at least one number, and the unordered sets of lengths of these groups coincide.
\end{theorem}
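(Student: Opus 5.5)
We work with the abelian envelopes $\mathcal{A}_C$ rather than with $\mathcal{E}_C$ itself. By the Remark following the definition of finite type categories, $\mathcal{D}^{-}(\mathcal{E}_C)\cong\mathcal{D}^{-}(\mathcal{A}_C)$. By Theorem \ref{serre}, $\mathcal{A}_C=\mathcal{A}_{max}/\langle S_c : c\in C\rangle$, and this quotient is realised by $\Hom(\bigoplus_{j\notin C}P_j,-)$, as in (\ref{factor}). The plan has two halves: sufficiency, by lifting braid group autoequivalences through the quotients as in the proof of Theorem \ref{THM}, and necessity, by extracting derived invariants of $\mathcal{D}^{\flat}(\mathcal{E}_C)$.

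\emph{Sufficiency.} Write $C_1$ as a disjoint union of maximal runs of consecutive integers. Maximality means the runs are separated by gaps. Suppose $C_2$ has the same multiset of run lengths. Then $C_2$ is obtained from $C_1$ by a finite sequence of steps. Each step either transposes two runs of equal length, or moves a run into free positions that lie at distance at least one from all other runs; the second kind is a "translation" through free positions.
\begin{itemize}
\item For each step I will write down an element $\beta$ of $B_{n-1}$ and set $\Phi=\rho(\beta)$. The requirement is that $\Phi$ maps $\{S_c\}_{c\in C_1}$ bijectively onto $\{S_c\}_{c\in C_2}$, up to shift.
\item The model is the Lemma: the element $\Phi_{ij}$ reverses the block $S_i,\dots,S_j$ and only spoils $S_{i-1}$ and $S_{j+1}$.
\item To swap two blocks of equal length separated by a gap, I will compose block-reversal elements on the union of the two blocks and the gap between them. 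This uses the half-twist $\Delta$ of the relevant sub-braid group, which sends $S_{a+k}$ to $S_{b-k}$ up to shift, and then corrects the orientation inside each block by the individual reversals $\Phi_{ij}$.
\end{itemize}
The key verification concerns the spoiled objects. A spoiled object only ever lands on an index that is \emph{not} in $C_1$ or $C_2$, and the separation hypothesis is exactly what guarantees this. Hence the essential image of $\langle S_c\rangle_{c\in C_1}$ under $\Phi$ is $\langle S_c\rangle_{c\in C_2}$. Then $\Phi$ descends to an equivalence of Verdier quotients
$$\mathcal{D}^{\flat}(\mathcal{A}_{max})/\langle S_c\rangle_{C_1}\;\xrightarrow{\sim}\;\mathcal{D}^{\flat}(\mathcal{A}_{max})/\langle S_c\rangle_{C_2}.$$
To identify these quotients, I will use that $\mathcal{A}_{max}$ has finite global dimension and that $Q_C=\RHom(\bigoplus_{j\notin C}P_j,-)$ has a fully faithful left adjoint. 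Together these identify each quotient with $\mathcal{D}^{-}(\mathcal{A}_C)$, or with its subcategory of perfect/compact objects. Restricting to the appropriate bounded pieces then gives the equivalence of $\mathcal{D}^{\flat}(\mathcal{E}_{C_1})$ and $\mathcal{D}^{\flat}(\mathcal{E}_{C_2})$.

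\emph{Necessity.} Here I need invariants of $\mathcal{D}^{\flat}(\mathcal{E}_C)$ that recover the multiset of run lengths. The first candidate is the singularity category. Let $\mathrm{Perf}$ denote the thick subcategory generated by the projectives of $\mathcal{A}_C$, i.e. $\mathrm{thick}(P_j,\ j\notin C)$. The singularity category $\mathcal{D}^{\flat}(\mathcal{A}_C)/\mathrm{Perf}$ is intrinsic: perfect objects are characterised as the homologically finite (compact) objects. I expect this category to decompose as an orthogonal sum over the runs. A run of length $\ell$ should contribute the singularity category of the local algebra at that run, namely the stable category of a $k[x]/(x^{\ell+1})$-type (Nakayama) algebra, which has $\ell$ indecomposables. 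The block decomposition then recovers the multiset of $\ell$'s, for example as the numbers of indecomposables in the connected components of the Auslander–Reiten quiver of the singularity category.

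\emph{Main obstacle.} I expect necessity to be the hard part. The two difficulties are identifying the blocks of the singularity category precisely and showing that this decomposition is a derived invariant. For the latter I would argue via Orlov's/Buchweitz's description together with the Krull–Schmidt property. The bookkeeping in sufficiency, checking that the spoiled objects $M_{i-1,j}$ avoid $C_2$, is routine once the separation condition is used.
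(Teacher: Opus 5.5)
Your sufficiency argument has a genuine gap. It claims that $C_2$ is reached from $C_1$ by two kinds of step: transposing runs of equal length, and translating a run through free positions. That claim is false. A translation through free positions cannot carry a run past another run, and swapping equal runs leaves the sequence of lengths unchanged. So both steps preserve the left-to-right sequence of run lengths, and in tight configurations neither step applies at all. For instance, index the spherical simples by $0,1,2,3$ (so $n=5$). Then $C_1=\{0,1,3\}$ and $C_2=\{0,2,3\}$ have the same multiset of run lengths. But the runs of $C_1$ have different lengths, and the only free position $2$ is adjacent to both runs, so none of your steps can be performed.

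The missing observation is that a single reversal already does the job. Suppose $i-1\notin C_1$ and $j+1\notin C_1$. Then $\Phi_{ij}$ fixes $S_p$ for $p\notin[i-1,j+1]$ and spoils only $S_{i-1}$ and $S_{j+1}$, which are irrelevant. Up to shifts, it sends $\{S_c\}_{c\in C_1}$ to $\{S_c\}_{c\in C'}$, where $C'$ is $C_1$ with $C_1\cap[i,j]$ mirrored inside $[i,j]$.
\begin{itemize}
\item If $[i,j]$ consists of two adjacent runs and the gap between them, this swaps runs of arbitrary lengths. In the example, $\Phi_{03}$ sends $C_1$ to $C_2$.
\item If $[i,j]$ is a gap followed by a run, this translates the run.
\end{itemize}
With these moves you can pack the runs to the left with single separators and then sort them by adjacent swaps. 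This connects any two configurations with the same multiset of run lengths. Your orientation-correcting reversals are unnecessary, since only the set $\{S_c\}_{c\in C_2}$ matters.

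The descent step is also misstated. The quotient $\mathcal{D}^{\flat}(\mathcal{A}_{max})/\mathrm{thick}\langle S_c\rangle_{c\in C}$ is not $\mathcal{D}^{-}(\mathcal{A}_C)$, because its objects have bounded cohomology. Nor is it the perfect part of $\mathcal{D}^{\flat}(\mathcal{A}_C)$, because it contains the images of $P_c$, $c\in C$, which have infinite projective dimension. The correct statement has two parts:
\begin{itemize}
\item $\mathrm{thick}\langle S_c\rangle_{c\in C}=\mathcal{D}^{\flat}_{\mathcal{S}}(\mathcal{A}_{max})$ for the Serre subcategory $\mathcal{S}=\langle S_c\rangle_{c\in C}$;
\item the exact quotient functor induces $\mathcal{D}^{\flat}(\mathcal{A}_{max})/\mathcal{D}^{\flat}_{\mathcal{S}}(\mathcal{A}_{max})\simeq\mathcal{D}^{\flat}(\mathcal{A}_C)$.
\end{itemize}
No restriction to bounded pieces is needed. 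Passing from $\mathcal{A}_C$ to $\mathcal{E}_C$ at the bounded level uses that every object of $\mathcal{A}_C$ has a resolution of length at most $2$ by objects of $\mathcal{E}_C$.

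For necessity you take a genuinely different route from the paper. The paper uses the Euler form on the Grothendieck group of perfect complexes of $\mathcal{A}_C$. In the basis $P_j$, $j\notin C$, its Gram matrix is $(\min(j_a,j_b))$. Integral Gauss elimination turns this into $\mathrm{diag}(j_1-j_0,j_2-j_1,\dots)$, whose entries are the run lengths plus one, padded by ones. This is short, but its last step is only asserted: that such forms are integrally equivalent only when the multisets agree. That step holds by Eichler's unique orthogonal decomposition of positive definite lattices.

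Your singularity-category invariant is more structural, and the expectation you leave open is true and provable:
\begin{itemize}
\item Let $Q$ be the sum of the indecomposable $A$-modules not indexed by $C$, including $A$, and let $\Gamma=\End_A(Q)$, so that $\mathcal{A}_C=\rmod\Gamma$.
\item $\mathcal{E}_C$ is Frobenius with projective-injectives $\mathrm{add}\,Q$, because every AR sequence here starts and ends at the same module.
\item $\Hom_A(Q,-)$ identifies $\mathcal{E}_C$ with the Gorenstein projective $\Gamma$-modules. By left exactness of $\Hom$, every second syzygy lies in its image, so $\Gamma$ is Gorenstein.
\item Buchweitz's theorem then gives $\mathcal{D}_{sg}(\mathcal{A}_C)\simeq\rmod A/[\mathrm{add}\,Q]$.
\item In this mesh category, maps between different runs factor through a separating vertex. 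A run of length $\ell$ contributes $\underline{\mathrm{mod}}\,k[x]/(x^{\ell+1})$, which has $\ell$ indecomposables.
\end{itemize}
Perfect complexes are intrinsic in $\mathcal{D}^{\flat}$ as the homologically finite objects; ``compact'' is the wrong notion there. With these facts your necessity argument goes through.
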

\begin{proof}
    Let us show the necessity. Consider the Euler bilinear form $\chi$ on $K_0(\mathcal{D}^{\flat}(\mathcal{A}_{max}))$. In the basis of projectives, it has the following matrix:
    \begin{equation}
    \label{matrix}    
        \chi = \begin{pmatrix}
            1 & 1 & 1 & \dots & 1\\
            1 & 2 & 2 & \dots & 2\\
            1 & 2 & 3 & \dots & 3\\
              &   & \dots & & \\
            1 & 2 & 3 &\dots & n\\
        \end{pmatrix}
    \end{equation}
    Using Gauss elimination method for bilinear forms, it can be easily checked, that this form is diagonalisable, and is equivalent (as an integral form) to identity matrix. Similarly, the bilinear form $\chi_C$, defined on $K_0(\mathcal{D}^{\flat}(\mathcal{E}_{C}))$, where C is as in the theorem, will have matrix, that can be obtained from matrix \ref{matrix} by removing columns and rows with numbers from $C$. Let $J = [0, n]-C$, and $l_k = j_k - j_{k-1}$ for $k\in [1, \#C+1]$ (these are the lengths of groups of elements in C). Again, from elementary Gauss elimination process, it can be seen, that these matrices are diagonalisable, and the diagonal elements are exactly $l_i$. Hence we get that these forms are equivalent as integral bilinear forms iff the collections of $l_k$ coincide, and the corresponding derived cateegories can be equivalent only in this case. This finishes the proof.
\end{proof}

\section{Conclusion}
The results obtained in this work can be generalized in various directions. First of all, it is of interest to formulate a necessary and sufficient condition for the derived equivalence of two exact structures on an additive category. It is unknown whether all such derived equivalences factor through the derived category of a smooth resolution of the category.

Another direction for further research is obtaining an explicit classification of exact structures on Krull-Schmidt categories that are not of finite type. Not all Serre subcategories in categories of modules over such categories can be described by their simple objects. This question is related to the problem of classifying (subcanonical) Grothendieck topologies on the category, and studying topology on the Ziegler spectrum of a category.

The author expresses deep gratitude to his scientific advisor, PhD A.I. Bondal. 
The work is supported by the state assignment of MIPT (project FSMG-2026-0015).


\begin{thebibliography}{99}
\bibitem[Aus]{Aus} M. Auslander, I. Reiten, S. O. Smalo. Representation theory of Artin algebras, Comm. algebra 3 (295).
\bibitem[BVdB]{BvB} A. I. Bondal and M. Van den Bergh. Generators and representability of functors in commutative and noncommutative geometry. Moscow Mathematical Journal, 3(1):1–36, 2003.
\bibitem[BB]{BB} A. Bodzenta and A. Bondal. Highest weight categories as abelian envelopes of exact categories.
\bibitem[BK]{BK} A.I. Bondal, M.M. Kapranov. Enhanced triangulated categories. Math. USSR Sbornik, vol. 70 (1991).
\bibitem[Bühler]{Buh} T. Bühler. Exact categories. Expo. Math. 28 (2010).
\bibitem[Crivei]{Cr} S. Crivei. Maximal exact structures on additive categories revisited.
\bibitem[En]{En} H. Enomoto. Classification of exact stuctures and Cohen--Macaulay-finite algebras. Advances in Mathematics, 335 (2018).
\bibitem[Sch]{Sch} J.-P. Schneiders. Quasi-abelian categories and sheaves. Mem. Soc. Math. Fr.(N.S.), (76):vi+134, 1999.
\bibitem[ST]{ST} P. Seidel and R. Thomas. Braid group actions on derived categories of coherent sheaves. Duke mathematical journal, vol. 108, 2001.
\bibitem[Rump]{Rump} W. Rump. On the maximal exact structure of an additive category. FUNDAMENTA MATHEMATICAE 214 (2011).
\end{thebibliography}
\end{document}